\theoremstyle{plain}
\newtheorem{theorem}{Theorem}[section]
\newtheorem{lemma}[theorem]{Lemma}
\newtheorem{corollary}[theorem]{Corollary}
\theoremstyle{remark}
\newtheorem*{remark}{Remark}
\newtheorem{definition}[theorem]{Definition}
\newtheorem{example}{Example}
\newtheorem{assumption}[theorem]{Assumption}
\newtheorem*{cond1}{Condition $\mathcal{D}(x_n;K_n)$}
\newtheorem*{cond2}{Condition $\mathcal{D}^\ell(x_n;k_n;\theta)$}
\newtheorem*{cond3}{Condition $\mathcal{D}^{(m)}(x_n;k_n)$}
\newtheorem*{cond4}{Condition $\overline{\mathcal{D}}(x_n;k_n;K_n)$}
\DeclarePairedDelimiter\abs\lvert\rvert
\DeclareMathOperator*{\argmin}{\arg\min}
\newcommand{\NN}{\mathbb{N}}
\newcommand{\ZZ}{\mathbb{Z}}
\newcommand{\Zd}{{\ZZ^d}}
\newcommand{\vZ}{{v\in \Zd}}
\newcommand{\RR}{\mathbb{R}}
\newcommand{\Rd}{{\RR^d}}
\newcommand{\vR}{{v\in \Rd}}
\newcommand{\PP}{\mathbb{P}}
\newcommand{\EE}{\mathbb{E}}
\newcommand{\dd}{\mathrm{d}}
\newcommand{\Jz}{J_z^{n,k_n}}
\newcommand{\dnm}{D_{n,k_n}^{-}}
\newcommand{\dnp}{D_{n,k_n}^{+}}
\newcommand{\Bb}{\mathcal{B}}
\newcommand{\I}[1]{\boldsymbol{1}_{#1}}
\newcommand{\FF}{\overline F}
\newcommand{\NNN}{\overline N}
\newcommand{\bb}[1]{\boldsymbol{#1}}
\newcommand{\sca}{\bb{ c_n}}
\newcommand{\scal}[1]{ c_{n,#1}}
\begin{document}

\begin{frontmatter}
%%%%%%%%%%%%%%%%%%%%%%%%%%%%%%%%%%%%%%%%%%%%%%
%%                                          %%
%% Enter the title of your article here     %%
%%                                          %%
%%%%%%%%%%%%%%%%%%%%%%%%%%%%%%%%%%%%%%%%%%%%%%
\title{Extremal clustering and cluster counting for spatial random fields}
%\title{A sample article title with some additional note\thanksref{T1}}
\runtitle{Extremal clustering of random fields}
%\thankstext{T1}{A sample of additional note to the title.}

\begin{aug}
\author[A]{\inits{AR}\fnms{Anders} \snm{R{\o}nn--Nielsen}\ead[label=e1,mark]{aro.fi@cbs.dk}}
\and
\author[A]{\inits{MS}\fnms{Mads} \snm{Stehr}\ead[label=e2,mark]{mast.fi@cbs.dk}}
%%%%%%%%%%%%%%%%%%%%%%%%%%%%%%%%%%%%%%%%%%%%%%
%% Addresses                                %%
%%%%%%%%%%%%%%%%%%%%%%%%%%%%%%%%%%%%%%%%%%%%%%
\address[A]{Department of Finance, Center for Statistics,
Copenhagen Business School, Frederiksberg, Denmark.
\printead{e1,e2}}
\end{aug}

\begin{abstract}
We consider a stationary random field indexed by an increasing sequence of subsets of $\Zd$ obeying a very broad geometrical assumption on how the sequence expands. Under certain mixing and local conditions, we show how the tail distribution of the individual variables relates to the tail behavior of the maximum of the field over the index sets in the limit as the index sets expand.

Furthermore, in a framework where we let the increasing index sets be scalar multiplications of a fixed set $C$, potentially with different scalars in different directions, we use two cluster definitions to define associated cluster counting point processes on the rescaled index set $C$; one cluster definition divides the index set into more and more boxes and counts a box as a cluster if it contains an extremal observation. The other cluster definition that is more intuitive considers extremal points to be in the same cluster, if they are close in distance. We show that both cluster point processes converge to a Poisson point process on $C$. Additionally, we find a limit of the mean cluster size. Finally, we pay special attention to the case without clusters.
\end{abstract}

\begin{keyword}
\kwd{Extreme value theory}
\kwd{spatial models}
\kwd{random fields}
\kwd{intrinsic volumes}
\kwd{extremal index}
\kwd{cluster counting process}
\kwd{limit theorems}
\end{keyword}

\end{frontmatter}

%%%%%%%%%%%%%%%%%%%%%%%%%%%%%%%%%%%%%%%%%%%%%%
%%%% Main text entry area:

%----------------------------
	\section{Introduction}
%----------------------------
This paper will provide a multitude of results within the realm of spatial extreme value theory, some of which will be generalizations of one-dimensional results or results from a simpler spatial framework. More precisely, we consider a stationary field $(\xi_v)_\vZ$ where $d\in\NN$, for which we consider the extremal behavior of $(\xi_v)_{v\in D_n}$ under a very rich asymptotic regime of increasing index sets $D_n \subseteq \Zd$. For detailed treatments of classical extreme value theory and its generalizations to stationary sequences in the one-dimensional case, we refer to e.g. \cite{Embrecths1997} and \cite{Leadbetter1983}.

In the literature, results for spatial objects, comparable to some of the present results, are to the best of the authors' knowledge only formulated under the assumption of $(D_n)_{n\in\NN}$ being a sequence of increasing boxes; see for instance \cite{Jakubowski2019,Soja2019} that provides results like the ones found in Section~\ref{sec:independenceapproximation} below under this additional geometrical assumption. In contrast, we allow the index sets $D_n$ to expand in a much more general way; we refer to the authors' papers \cite{StehrRonnNielsen2021a,StehrRonnNielsen2020,StehrRonnNielsen2021b} for similar 
however slightly less general
assumptions on the sequence of index sets, but with other and less general results.
Our requirement will simply be that each $D_n$ is the lattice points of a sufficiently nice full-dimensional subset of $\Rd$.
To be precise, we assume that $D_n$ is given as $D_n=C_n\cap \Zd$, where $C_n$ is a union of convex bodies
contained in an expanding box. This essentially means that all of the convex bodies in the union, when scaled coordinate-wise with the side lengths of the surrounding box, has bounded intrinsic volumes as $n\to\infty$;
see \cite[Chapter~4]{Schneider1993} for an introduction to convex geometry and in particular intrinsic volumes of convex bodies, and see Assumption~\ref{ass:Cnassumption} below for the formal description. If $d=3$ and $C_n$ itself is a convex body for all $n\in\NN$, 
the assumptions contains --- but is indeed not limited to --- the case where $C_n$ expands at a similar pace in all directions, meaning equivalently that the mean width of $C_n$ is asymptotically bounded by the cubic root of its volume, and that the surface area is asymptotically bounded by the cubic root of the squared volume.

The first part of the paper concerns the asymptotic distribution function of $\max_{v\in D_n} \xi_v$ along some sequence $x_n$. More precisely, we first formulate a mixing condition ensuring appropriate asymptotic independence of the field, which implies the following representation as $n\to\infty$:
\begin{equation}\label{eq:intro1}
	\PP \bigl(\max_{v\in D_n} \xi_v \le x_n \bigr) 
	=
	\exp \Bigl( - \abs{D_n} 
	\PP\bigl(\max_{v\in A_0^{n,k_n}} \xi_v\le x_n < \xi_0 \bigr) \Bigr) + o(1) .
\end{equation}
Here the set $A_0^{n,k_n}$ is defined by
\[
	A_0^{n,k_n} = \bigl\{v\in \bigl(\bigtimes_{\ell=1}^d [-t_{n,k_n}^\ell,-t_{n,k_n}^\ell] \bigr)\cap \Zd\::\: 0 \prec v\bigr\} ,
\]
where $t_{n,k_n}^\ell\in \NN$ is an increasing integer for all $\ell=1,\dots,d$, and $\preceq$ is a translation invariant total order on $\Zd$ (as for instance the lexicographical order). A one-dimensional (i.e. $d=1$) counterpart of this result can be found in \cite[Theorem~2.1]{Obrien1987}, and in \cite[Theorem~3.1]{Soja2019} a result is obtained in the special case where $(D_n)$ is an increasing sequence of boxes in $\Zd$. Secondly, we formulate a local condition describing the clustering of exceedances of $(\xi_v)_\vR$ over the threshold $(x_n)$ in terms of an index $\theta\in (0,1]$; the mixing and local conditions are named $\mathcal D(x_n;K_n)$ and $\mathcal{D}^\ell(x_n;k_n;\theta)$, respectively (with $K_n$ and $k_n$ specified in the relevant section), and will be referred to as this in the present introduction. If e.g. $(\xi_v)_\vZ$ is $m$-dependent, meaning in particular that any two variables a distance more than $m$ apart are independent, the local condition $\mathcal{D}^\ell(x_n;k_n;\theta)$ simply reads
\[
	\lim_{n\to\infty} \PP( \max_{v \in A_0^{(m)}} \xi_v \le x_n \mid \xi_0 > x_n) = \theta ,
\]
where the set $A_0^{(m)} = \{v\in [-m,m]^d \cap \Zd\::\: 0 \prec v\}$ is a fixed subset of $A_0^{n,k_n}$ from the representation \eqref{eq:intro1}. A simple example given in the paper is the stationary field defined by
\[
	\xi_v = \max_{z \in v + B} Y_z , \qquad v\in\Zd,
\]
where $B$ is a finite subset of $\Zd$ and the field $(Y_z)_{z\in\Zd}$ consists of i.i.d. variables. In fact, the field is $m$-dependent for sufficiently large $m$ and it satisfies the local condition with $\theta=1/\abs{B}$, where $\abs{B}$ denotes the number of points in $B$. Using the same cardinality notation as in the example, we show under the mixing and local conditions that
\begin{equation}\label{eq:intro2}
	\abs{D_n} \PP(\xi_0>x_n) \to \tau
	\quad \text{if and only if}\quad
	\PP(\max_{v\in D_n} \xi_v \le x_n) \to \exp(-\theta \tau) 
\end{equation}
as $n\to\infty$ for $\tau \in [0,\infty)$.
This result constitutes a substantial generalization of \cite[Theorem~5]{StehrRonnNielsen2020} that only considers the special case of $\theta=1$ --- meaning that no clustering of exceedances occur --- but under
a similar, but less general, asymptotic expansion of the index sets.
An immediate consequence of the considerations leading to the equivalence above, is the fact that $(\xi_v)$ has extremal index $\theta \in (0,1]$ if and only if the local condition $\mathcal{D}^\ell(x_n;k_n;\theta)$ is satisfied; we refer to Section~\ref{sec:independenceapproximation} for the formal definition of an extremal index (with respect to $D_n$), which is in accordance with the definition given in e.g. \cite{Leadbetter1983} for $d=1$. This result generalizes the claims in \cite{Chernick1991, Soja2019}, dealing with the one-dimensional case and the case of index sets being boxes, respectively, to the present framework allowing for a much more involved asymptotic development of the index sets.

The remainder of the paper is devoted to deriving asymptotic representations of various forms of cluster and exceedance counting processes for spatial random fields satisfying mixing and local conditions as above. Specifically, we work under the additional assumption that
\begin{equation}\label{fml:multipliedset}
	D_n = (\sca C)\cap \Zd 
\end{equation}
where $\sca$ is a $d$-dimensional vector with each entrance tending to $\infty$, and $C \subseteq \Rd$ is a finite union of convex bodies, and standardized together with $(\sca)$ such that $C$ has Lebesgue measure 1. Here, $\sca C$ denotes the coordinate-wise multiplication of $C$ by the entries of $\sca$. This in particular ensures that our sufficient geometric assumption, Assumption~\ref{ass:Cnassumption}, is satisfied. Furthermore we assume that there is a real sequence $(x_n)$ such that the conditions $\mathcal D(x_n;K_n)$ and $\mathcal{D}^\ell(x_n;k_n;\theta)$ are satisfied with $\theta \in (0,1]$, and such that
\[
	\abs{D_n} \PP(\xi_0>x_n) \to \tau \in (0,\infty)
\]
as $n\to\infty$. 

Now, we define two different cluster counting processes of exceedances over the threshold $x_n$. The first is a spatial version of the classical one-dimensional definition, e.g. seen in \cite{Hsing1988} and \cite{Leadbetter1983a}: $\Zd$ is divided into disjoint (increasing) 
boxes 
each potentially being counted as a cluster if there is an $x_n$-exceedance within the 
box. For $d=1$ such a counting process is usually defined on $(0,1]$ via a rescaling of the indices, whereas our geometric
construction in \eqref{fml:multipliedset} allows us to define it on the general set $C$. The second cluster counting process is to the best of our knowledge previously unseen in the literature in relation to extreme value theory. This counting process is also defined on $C$ but is based on the more intuitive definition that a cluster is formed by the indices of $x_n$-exceedances which are within a given distance of other exceedances. Here, the relevant distance corresponds to the length of the diagonal of a rescaled version of the boxes from the first type of counting process. 
We show that both counting processes in fact converges in distribution towards a homogeneous Poisson process on $C$ with intensity $\theta \tau $. A one-dimensional counterpart of the result for the first of the two cluster processes only is found in \cite{Leadbetter1983}. A related spatial result under different and much stronger conditions is found in \cite{Ferreira2012} in the simpler spatial scenario, where the index sets are increasing boxes: It is shown that the original exceedance point process (before collecting them into clusters) converges to a compound Poisson point process. 

Moreover, we show that the expected size of such clusters is asymptotically equal to $1/\theta$. This is in accordance with the typical interpretation of the extremal index as the reciprocal of the mean number of exceedances in a cluster, but it is indeed not a triviality for the second cluster counting process. That the cluster counting processes converge to a Poisson process in particular means independence between cluster positions in the limit. The extremal independence between clusters is further underlined in the additional result, where we demonstrate that the limiting mean cluster size is in fact independent of the total number of clusters in $C$.

Related to the two cluster counting processes defined on $C$, we also define two associated cluster counting processes defined on the original scale in $\Zd$. Using the distributional convergence results above, we show that both of these original-scale counting processes satisfy the following: If $(B_n^1),\dots,(B_n^G)$ are disjoint sequences of subsets of $D_n\subseteq\Zd$ each satisfying the geometric Assumption~\ref{ass:Cnassumption}, then the joint distribution of cluster counts of either type converges in distribution to $(L^1,\dots,L^G)$, which are independent random variables with each $L^g$ being Poisson distributed with parameter $\theta\tau \lim_{n\to\infty} \abs{B_n^g}/\abs{D_n}$ (assuming that the limit exists). This result is far from being a trivial consequence of the convergence of the cluster point processes defined above, as the asymptotic behavior of the $B_n^g$ sets can be considerably more complex than what is obtained by rescaling to original scale of subsets of the set $C$ introduced in \eqref{fml:multipliedset}.

In the last part of the paper, we consider the non-clustering case of $\theta=1$ and the usual point process of exceedances over the threshold $x_n$, now defined on the general set $C$. We show that this process and its associated original-scale process converges exactly as the cluster counting processes but with $\theta=1$; see e.g. \cite[Chapter~5]{Leadbetter1983} for the classical one-dimensional case.

The paper is organized as follows. In Section~\ref{sec:geometry} we introduce the geometric structures applied in the paper. In Section~\ref{sec:independenceapproximation} we introduce the mixing and local conditions, and furthermore show the representation \eqref{eq:intro1} of the distribution function of $\max_{v\in D_n} \xi_v$. This then leads to the equivalence \eqref{eq:intro2} and to the result on the existence of an extremal index. 
Sections~\ref{sec:clusterpointmeasures} and \ref{sec:meannumberpoints} are devoted to the convergence of cluster counting processes and the expected size of such clusters, respectively, and Section~\ref{sec:originalscale} contains convergence results for the cluster counting processes on the original scale $\Zd$. Finally, Section~\ref{sec:noclustering} contains similar results to the ones in Sections~\ref{sec:clusterpointmeasures} and \ref{sec:originalscale} for the non-clustering case.

%----------------------------
	\section{Geometric assumption and preliminaries}\label{sec:geometry}
%----------------------------

As mentioned in the introduction, we consider a stationary random field $(\xi_v)_{v\in \Zd}$ and a sequence of very flexible index sets $(D_n)_{n\in \NN}$ with $D_n \subseteq\Zd$. The main purpose of this section is to present the sufficient assumption on the expansion of the index sets, which is in fact formulated in terms of a continuous counterpart. 

Before presenting the assumption, we mention the following notation used throughout the paper. We let $\abs{{}\cdot{}}$ be a general size-measure understood as follows: $\abs{v}$ is the Euclidean norm of a single one- or multidimensional point $v$, $\abs{A}$ is Lebesgue measure of a full-dimensional set $A\subseteq \Rd$, and $\abs{A}$ is the number of points in a discrete set $A\subseteq \Zd$.
For two sets $A, B\subseteq \Rd$, we define their Minkowski sum by $A\oplus B=\{a+b\mid a\in A,b\in B\}$, and we let $B(r)=\{u\in\Rd\::\: \abs{u}\le r\}$ be the closed ball in $\Rd$ centered at the origin $0\in\Rd$ and with radius $r\ge 0$.
Furthermore, we will use the following notation for a coordinate-wise scaling of a set $A \subseteq \Rd$: With $\bb{r}$ (in bold) denoting a vector of $d$ elements $r_1,\dots,r_d$ we write
\[
	\bb{r} A = \{ (r_1 a_1,\dots,r_d a_d) \in \Rd\::\: (a_1,\dots,a_d)\in A\} . 
\]
That is, $\bb{r} A$ is a compact notation for the linear transformation of $A$ induced by the diagonal matrix with entries $r_1,\dots,r_d$. In particular we obtain for any full-dimensional set $A$ that $\bb{r}A$ has Lebesgue measure
\begin{equation*}%\label{eq:matrixprodleb}
	\abs{\bb{r}A} = \abs{A} \prod_{\ell=1}^d r_\ell  .
\end{equation*}
If $\bb{r}$ is a vector of $d$ identical entries $r$, that is, if the scaling is the same in all directions, we write
\[
	\bb{r}A = rA = \{ (r a_1,\dots,r a_d) \in \Rd\::\: (a_1,\dots,a_d)\in A\} .
\] 
Lastly, for a vector $\bb{r}$ we write $\bb{r}^{-1}$ or $1/\bb{r}$ for the vector of elements $r_1^{-1},\dots,r_d^{-1}$.

First, we define a type of continuous set which will be the cornerstone of the index set assumption. In the following definition, a convex body in $\Rd$ is a compact and convex set with non-empty interior; see \cite[Chapter~4]{Schneider1993}.

\begin{definition}
A set $C\subseteq \Rd$ is said to be $p$-convex, if it has the form
\[
C=\bigcup_{i=1}^{p} \overline C_i\,,
\]
where $\overline C_1,\ldots,\overline C_{p}$ are convex bodies in $\Rd$. 
\end{definition}

The geometric requirement on the discrete index sets $(D_n)$ is in fact given in terms of requirements on an associated $p$-convex set, which is assumed to exist. 
%To be more precise, 
Essentially we require that the so-called intrinsic volumes of a down-scaled version of the convex bodies defining the $p$-convex set are bounded. If the $p$-convex set increases similarly in 
all directions, this simply means that the intrinsic volumes has to be comparable in order to certain powers of the volume of the set. The $j$th intrinsic volume $V_j(C)$ (for $j=1,\ldots,d$) describes the geometry of the convex body $C$, and additionally, the collection of intrinsic volumes constitutes an essential part in the famous Steiner formula from convex geometry. As indicated in the introduction, the intrinsic volumes have a direct geometrical meaning, among which we mention a few: $V_0(C)=1$, $V_1(C)$ is proportional to the mean width, $V_{d-1}(C)$ is half the surface area, and $V_d(C)=\abs{C}$ is the volume of $C$. Moreover, the intrinsic volumes satisfy some important properties including non-negativity, i.e.  $V_j(C)\geq 0$, homogeneity, i.e. $V_j(\gamma C)=\gamma^j V_j(C)$ for all $\gamma>0$, and monotonicity, i.e. $V_j(C)\leq V_j(D)$ for $C\subseteq D$.

We can now state the sufficient assumption on the index sets. Due to stationarity of the random fields considered in the paper, we can and do without loss of generality assume that $0\in D_n$, even though this is not stated in the assumption. 
Note also that the assumption as such does not mention the intrinsic volumes but rather a surrounding box with volume proportional to the size of the index set. However, as described in the subsequent lemma, this is essentially equivalent to the requirement on the intrinsic volumes presented in \eqref{ass:boundedintvolumes}. The assumption below is therefore slightly more general than that of e.g. \cite{StehrRonnNielsen2020} in which connectivity combined with the intrinsic volume requirements are assumed, and the same order of expansion in all directions is required.

\begin{assumption}\label{ass:Cnassumption}
For the sequence $(D_n)_{n\in\NN}$ of subsets of $\Zd$ there exists a sequence $(C_n)_{n\in\NN}$ of $p$-convex sets such that $D_n = C_n \cap \Zd$, where
\[
	C_n=\bigcup_{i=1}^{p} C_{n,i} 
\]
and $\abs{C_n}\to \infty$ as $n\to\infty$. Moreover, there is a sequence of $d$-dimensional vectors $(\bb{c_n})_{n\in\NN}$ each with elements 
$0<c_{n,1},\dots,c_{n,d}<\infty$ such that all of the following is satisfied:
\begin{enumerate}[label=\normalfont(\roman*)]
\item \label{item:cn1}$c_{n,\ell} \to \infty$ as $n\to\infty$ for all $\ell=1,\dots,d$.
\item \label{item:cn2}$c_{n,1} \cdots c_{n,d}	\sim \abs{C_n}$ as $n\to\infty$.
\item There exists $0<c<\infty$ such that
\begin{equation}\label{eq:geoassumption1}
		C_n \subseteq \bb{c_n} [-c,c]^d
%		\bigtimes_{\ell=1}^d \bigl[-c\, c_{n,\ell}
%		\:,\:
%		c\, c_{n,\ell} \bigr]
\end{equation}
for all $n$.
\end{enumerate}
\end{assumption}

In the remainder of the paper, when considering a sequence of sets satisfying Assumption~\ref{ass:Cnassumption}, we will refer to the vectors $\bb{c_n}$ as scaling vectors.

\begin{lemma}
Let $C_n$ be a union of convex bodies as defined in Assumption~\ref{ass:Cnassumption}, and let $(\bb{c_n})$ be scaling vectors satisfying Assumption~\ref{ass:Cnassumption} \ref{item:cn1}--\ref{item:cn2}. If \eqref{eq:geoassumption1} is satisfied then
\begin{equation}\label{ass:boundedintvolumes}
	\sum_{i=1}^p V_j(\bb{c_n^{-1}}C_{n,i})
	\quad
	\text{is bounded in $n$ for each }j=1,\dots,d-1 .
\end{equation}
If $C_n$ is also connected then \eqref{eq:geoassumption1} and \eqref{ass:boundedintvolumes} are equivalent.
\end{lemma}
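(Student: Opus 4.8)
The forward implication \eqref{eq:geoassumption1}$\Rightarrow$\eqref{ass:boundedintvolumes} is the easy direction, and the plan is simply to scale down the inclusion \eqref{eq:geoassumption1} by $\bb{c_n^{-1}}$. Since the coordinate-wise scaling $\bb{c_n^{-1}}$ is an invertible linear map, it sends convex bodies to convex bodies, and each $C_{n,i}\subseteq C_n\subseteq \bb{c_n}[-c,c]^d$ becomes $\bb{c_n^{-1}}C_{n,i}\subseteq \bb{c_n^{-1}}\bb{c_n}[-c,c]^d=[-c,c]^d$, a fixed box independent of $n$. Monotonicity of the intrinsic volumes then yields $V_j(\bb{c_n^{-1}}C_{n,i})\le V_j([-c,c]^d)$ for every $j$, so the sum in \eqref{ass:boundedintvolumes} is bounded by $p\,V_j([-c,c]^d)$ for each $j=1,\dots,d-1$.

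For the converse under connectedness I would write $\tilde C_{n,i}=\bb{c_n^{-1}}C_{n,i}$ and $\tilde C_n=\bb{c_n^{-1}}C_n=\bigcup_{i=1}^p \tilde C_{n,i}$, and reduce to showing that $\tilde C_n\subseteq[-c,c]^d$ for some $c$ and all $n$, since applying $\bb{c_n}$ recovers \eqref{eq:geoassumption1}. The key geometric input is that the first intrinsic volume controls the diameter of a convex body: if $K$ is a convex body whose diameter is realized by points $p,q\in K$, then $[p,q]\subseteq K$ by convexity, and combining monotonicity with the fact that $V_1$ of a line segment is proportional to its length gives $V_1(K)\ge \alpha_d\,\mathrm{diam}(K)$ for a dimensional constant $\alpha_d>0$. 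Because $\sum_i V_1(\tilde C_{n,i})$ is bounded by \eqref{ass:boundedintvolumes} and every summand is nonnegative, each $V_1(\tilde C_{n,i})$, and hence each $\mathrm{diam}(\tilde C_{n,i})$, is bounded by some constant $M$ uniformly in $n$ and $i$.

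The remaining step is to pass from the individual pieces to the whole union, which is where connectedness enters. I would first argue that the intersection graph of the pieces $\tilde C_{n,1},\dots,\tilde C_{n,p}$ (an edge joining two pieces whenever they meet) is connected: were it not, the pieces would partition into two classes with no piece in one meeting any piece in the other, exhibiting $\tilde C_n$ as a disjoint union of two nonempty compact sets and contradicting its connectedness. As this graph has only $p$ vertices, any two pieces are joined by a path through at most $p$ pieces, and chaining through the shared points along such a path gives $\abs{x-y}\le pM$ for all $x,y\in\tilde C_n$, so $\mathrm{diam}(\tilde C_n)\le pM$. Since $0\in D_n\subseteq C_n$ by the standing convention, we have $0\in\tilde C_n$, whence $\tilde C_n\subseteq B(pM)\subseteq[-pM,pM]^d$, and $c=pM$ completes the argument.

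I expect the main obstacle to be the geometric step turning the analytic boundedness of $V_1$ into a uniform diameter bound on each convex piece, together with the bookkeeping in the chaining argument that upgrades the per-piece bounds to a diameter bound for the connected union $\tilde C_n$; the forward direction and the scaling reductions are routine consequences of monotonicity and homogeneity.
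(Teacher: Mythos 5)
Your proof is correct. The forward direction coincides with the paper's one\-/line argument (rescale by $\bb{c_n^{-1}}$ so that each piece lands in the fixed box $[-c,c]^d$ and apply monotonicity of $V_j$), but your converse takes a genuinely different route. The paper obtains the converse from the proof of Theorem~\ref{thm:maingeometrictheorem1}\ref{eq:geomthm4new}: setting $k_n\equiv 1$ in the grid approximation, it bounds the number $q_{n,1}$ of grid boxes meeting $\bb{c_n^{-1}}C_n$ by the volume of a tube around $\partial(\bb{c_n^{-1}}C_n)$, controlled via a Steiner-type inequality in terms of all of $V_0,\dots,V_{d-1}$, and then uses connectivity to force the at most $\overline c$ grid indices, which include $0$, into $[-\overline c,\overline c]^d$. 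You instead use only $V_1$: comparison with a diameter-realizing segment gives $\mathrm{diam}(K)\le V_1(K)$ (with the standard normalization the constant is in fact $1$), so each rescaled piece has uniformly bounded diameter, and the connectedness of the intersection graph of the $p$ pieces --- which does follow from connectedness of the union by the compactness argument you give --- yields $\mathrm{diam}(\bb{c_n^{-1}}C_n)\le pM$ by chaining through shared points, after which anchoring at $0\in C_n$ finishes. Your version is more elementary and self-contained (no tube-volume estimate), while the paper's reuses machinery it needs anyway for Theorem~\ref{thm:maingeometrictheorem1}. Two marginal remarks: for $d=1$ the index range in \eqref{ass:boundedintvolumes} is empty, so the bound on $V_1(\bb{c_n^{-1}}C_{n,i})$ must then come from the volume normalization in Assumption~\ref{ass:Cnassumption}\ref{item:cn2} rather than from \eqref{ass:boundedintvolumes} (a one-line patch); and both your argument and the paper's rely on the standing convention $0\in D_n$, without which the equivalence fails under translation, as you correctly note.
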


\begin{proof}
The first claim of the lemma follows simply be the monotonicity and homogeneity of the intrinsic volumes. The second claim follows from Theorem~\ref{thm:maingeometrictheorem1}\ref{eq:geomthm4new} below.
\end{proof}

The next example illustrates how Assumption~\ref{ass:Cnassumption} looks like when $C_n$ is connected and expands at the same pace in all directions. This assumption was the one used in the papers \cite{StehrRonnNielsen2021a,StehrRonnNielsen2020,StehrRonnNielsen2021b}.

\begin{example}\label{ex:simpleassumption}
Let $D_n=C_n \cap \Zd$, where $C_n = \cup_{i=1}^p C_{n,i}$ is a $p$-convex set for all $n$. If $C_n$ is connected and $\bb{c_n} = (\abs{C_n}^{1/d},\dots,\abs{C_n}^{1/d})$ for all $n$, then Assumption~\ref{ass:Cnassumption} is satisfied with this scaling vector $\sca$ if and only if
\begin{equation*}
	\frac{\sum_{i=1}^p V_j(C_{n,i})}{\abs{C_n}^{j/d}}
	\quad
	\text{is bounded in $n$ for each }j=1,\dots,d-1 .
\end{equation*} 
\end{example}

We approximate the index sets $D_n$ by a union of certain increasing boxes $\Jz$, $z\in\Zd$, where $n\to\infty$ and $k_n\in\NN$ tends to $\infty$ at a sufficiently slow rate given later. The full approximation scheme is given after this paragraph. The parameter $k_n\in\NN$ determines the size of these boxes relative to the size of $D_n$. In particular, as it will be shown in Theorem~\ref{thm:maingeometrictheorem1}\ref{eq:geomthm2new}, we can approximate $D_n$ by a union of approximately $k_n$ such boxes, with the approximation improving for increasing $n$. 
The approximation scheme is very similar to that of \cite{StehrRonnNielsen2020}, although in that paper $k_n=k$ is constant and independent of $n$.

Let $(D_n)_{n\in\NN}$ be a sequence of sets satisfying Assumption~\ref{ass:Cnassumption}, and let $C_n$ be the associated $p$-convex sets, i.e. $D_n = C_n \cap \Zd$ for all $n\in\NN$. Furthermore, let $(\bb{c_n})$ be the sequence of $d$-dimensional scaling vectors appearing in the assumption, each with elements $c_{n,1},\dots,c_{n,d}$. Let $(k_n)$ be a sequence satisfying that $k_n\to\infty$ and $k_n^{1/d} = o(c_{n,\ell})$ for all $\ell=1,\dots,d$. Note that this will be implied by growth assumptions specified later. For each $n\in\NN$ we define the integers
\[
	t_{n,k_n}^\ell = \lfloor  c_{n,\ell}/k_n^{1/d}\rfloor,
	\qquad\text{for all }\ell=1,\dots,d ,
\]
and we collect these integers in the vector $\bb{t_{n,k_n}}$. 
This vector notation will be used throughout the remainder of the paper.
For each $z=(z_1,\ldots,z_d)\in\ZZ^d$ we now define $I^{n,k_n}_z$ to be the box with corner $\bb{t_{n,k_n}}z$ and side-lengths given by the entries of $\bb{t_{n,k_n}}$, i.e.
\[
	I_z^{n,k_n}= \bb{t_{n,k_n}} \bigl(z + [0,1)^d \bigr)=
	\bigtimes_{\ell=1}^d\big[z_\ell t_{n,k_n}^\ell,(z_\ell+1)t_{n,k_n}^\ell\big)\,.
\]
The idea of the sets $I^{n,k_n}_z$ is that they can be used to approximate the
$C_n$-sets better and better by increasing $n$. For this reason, let
$P_{n,k_n}$ be the set of indices $z$ for which $I_z^{n,k_n}$ is contained in $C_n$, and let $Q_{n,k_n}$ be the set of indices $z$ for which $I_z^{n,k_n}$ is intersected by $C_n$. That is,
\[
	P_{n,k_n}=\{z\in\ZZ^d\::\: I_z^{n,k_n}\subseteq C_n\}\,,\quad\text{and}\quad 
	Q_{n,k_n}=\{z\in\ZZ^d\::\: I_z^{n,k_n}\cap C_n\neq \emptyset\}\,.
\]
Moreover, we let $p_{n,k_n}=\abs{P_{n,k_n}}$ and $q_{n,k_n}=\abs{Q_{n,k_n}}$ be the size of those sets and note that, by construction, $\limsup_{n\to\infty }p_{n,k_n}/k_n \le 1$ and $\liminf_{n\to\infty }q_{n,k_n}/k_n\ge 1$. 

To approximate the index sets $D_n$ we use the lattice points $J_z^{n,k_n}=I_z^{n,k_n}\cap \ZZ^d$ of $I_z^{n,k_n}$, and define
\begin{equation}\label{eq:plusandminussets1}
	\dnm=\bigcup_{z\in P_{n,k_n}}J_z^{n,k_n}
	\qquad\text{and}\qquad 
	\dnp=\bigcup_{z\in Q_{n,k_n}}J_z^{n,k_n} .
\end{equation}
Since $J_z^{n,k_n}\subseteq D_n$ for all $z\in P_{n,k_n}$, and $z\in Q_{n,k_n}$ for all $J_z^{n,k_n}$ with $J_z^{n,k_n}\cap D_n\neq\emptyset$, we have the approximation
\begin{equation}\label{fml:Cnplusminus1}
	\dnm
	\subseteq D_n
	\subseteq \dnp .
\end{equation} 

As mentioned, the following theorem gives the quality of the approximation scheme, when approximating $D_n$ using the sets $J_z^{n,k_n}$. The proof can be found in Section~\ref{appnA1} in Appendix~\ref{appnA}.

\begin{theorem}\label{thm:maingeometrictheorem1}
Let $(D_n)_{n\in\NN}$ satisfy Assumption~\ref{ass:Cnassumption}, and let $C_n$ be the $p$-convex  set associated with $D_n$ by $D_n=C_n\cap \ZZ^d$. Furthermore, let $(\bb{c_n})$ be the sequence of $d$-dimensional scaling vectors each with elements $0<c_{n,1},\dots,c_{n,d}<\infty$ introduced in Assumption~\ref{ass:Cnassumption}, and let $k_n\to\infty$ be a sequence satisfying $k_n^{1/d}=o(c_{n,\ell})$ for each $\ell$. 
Then
\begin{enumerate}[label=\normalfont(\roman*)]
	\item \label{eq:geomthm1new} $\abs{D_n}\sim \abs{C_n}$ as $n\to\infty$,
	\item \label{eq:geomthm2new} the sequences $p_{n,k_n}$ and $q_{n,k_n}$, defined above, satisfy that
	\[
		\lim_{n\to\infty}\frac{p_{n,k_n}}{k_n}=
		\lim_{n\to\infty}\frac{q_{n,k_n}}{k_n}=1 ,
	\]
\item\label{eq:geomthm4new} there is a set $K_n$, that can be chosen independently of $(k_n)_{n\in\NN}$, defined by
	\begin{align*}
	K_n & = \Zd \cap \bb{c_n} [-c,c]^d \\ &=
	\Zd \cap 
\bigtimes_{\ell=1}^d \bigl[-c\cdot c_{n,\ell}
		\:,\:
		c\cdot c_{n,\ell} \bigr] 
	\end{align*}
	for some $0<c<\infty$, such that $\dnp \subseteq K_n$
	for all $n\in\NN$. 
\end{enumerate}
If the sets $(C_n)$ are all connected and satisfy \eqref{ass:boundedintvolumes} instead of \eqref{eq:geoassumption1}, then Assumption~\ref{ass:Cnassumption} still holds and in particular \ref{eq:geomthm1new}--\ref{eq:geomthm4new} does so too.
\end{theorem}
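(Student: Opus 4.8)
The plan is to reduce the whole theorem to a single estimate, namely that the number of \emph{boundary boxes}
\[
    r_{n,k_n} := \abs{Q_{n,k_n}\setminus P_{n,k_n}}
\]
is $o(k_n)$. The reason is that the boxes $I_z^{n,k_n}$ are disjoint and each carries exactly $\prod_{\ell=1}^d t_{n,k_n}^\ell$ lattice points and the same Lebesgue volume $\prod_{\ell=1}^d t_{n,k_n}^\ell$. Hence the inclusions $\dnm\subseteq D_n\subseteq\dnp$ from \eqref{fml:Cnplusminus1}, together with $\bigcup_{z\in P_{n,k_n}}I_z^{n,k_n}\subseteq C_n\subseteq\bigcup_{z\in Q_{n,k_n}}I_z^{n,k_n}$, sandwich \emph{both} $\abs{D_n}$ and $\abs{C_n}$ between $p_{n,k_n}\prod_\ell t_{n,k_n}^\ell$ and $q_{n,k_n}\prod_\ell t_{n,k_n}^\ell$, so that
\[
    \bigl|\abs{D_n}-\abs{C_n}\bigr|\le r_{n,k_n}\prod_{\ell=1}^d t_{n,k_n}^\ell .
\]
Since $t_{n,k_n}^\ell\sim \scal{\ell}/k_n^{1/d}$ gives $\prod_\ell t_{n,k_n}^\ell\sim\abs{C_n}/k_n$, the estimate $r_{n,k_n}=o(k_n)$ immediately yields \ref{eq:geomthm1new}; and since $r_{n,k_n}=q_{n,k_n}-p_{n,k_n}$, combining it with the elementary bounds $\limsup p_{n,k_n}/k_n\le 1$ and $\liminf q_{n,k_n}/k_n\ge 1$ forces $p_{n,k_n}/k_n,q_{n,k_n}/k_n\to 1$, which is \ref{eq:geomthm2new}.

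The core of the argument, and the step I expect to be the main obstacle, is the bound $r_{n,k_n}=o(k_n)$. Each box with index $z\in Q_{n,k_n}\setminus P_{n,k_n}$ is convex and contains both a point of $C_n$ and a point of its complement, hence meets $\partial C_n$; as the side lengths are $t_{n,k_n}^\ell$ this yields
\[
    \bigcup_{z\in Q_{n,k_n}\setminus P_{n,k_n}}I_z^{n,k_n}\subseteq\partial C_n\oplus\bb{t_{n,k_n}}[-1,1]^d .
\]
Using $\partial C_n\subseteq\bigcup_{i}\partial C_{n,i}$ and the coordinate rescaling $\bb{c_n^{-1}}$ --- under which each $C_{n,i}$ becomes a convex body $\widehat C_{n,i}\subseteq[-c,c]^d$ and the box $\bb{t_{n,k_n}}[-1,1]^d$ shrinks into $B(\rho_n)$ with $\rho_n=\sqrt d\,k_n^{-1/d}\to 0$ --- I would bound, for each $i$, the boundary collar $\abs{\partial\widehat C_{n,i}\oplus B(\rho_n)}$ by $\abs{\widehat C_{n,i}\oplus B(\rho_n)}-\abs{\widehat C_{n,i}\ominus B(\rho_n)}$. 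Here Steiner's formula controls the outer layer by $\sum_{j=0}^{d-1}\kappa_{d-j}\rho_n^{d-j}V_j(\widehat C_{n,i})$ (with $\kappa_m$ the volume of the unit ball in $\RR^m$), while the inner layer obeys $\abs{\widehat C_{n,i}}-\abs{\widehat C_{n,i}\ominus B(\rho_n)}\le 2\rho_n V_{d-1}(\widehat C_{n,i})$ by monotonicity of surface area under the inner parallel operation. For $\rho_n\le 1$ both are at most $C_d\,\rho_n\sum_{j=0}^{d-1}V_j(\widehat C_{n,i})$. Summing over $i$ and invoking the boundedness \eqref{ass:boundedintvolumes} (which holds under \eqref{eq:geoassumption1} by the preceding lemma, with $V_0\equiv 1$ contributing the constant $p$), undoing the rescaling gives
\[
    r_{n,k_n}\prod_{\ell=1}^d t_{n,k_n}^\ell\le C_d\,M\,\rho_n\prod_{\ell=1}^d\scal{\ell}
\]
for a constant $M$, whence $r_{n,k_n}=O(\rho_n k_n)=o(k_n)$.

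Part \ref{eq:geomthm4new} needs no collar estimate. If $z\in Q_{n,k_n}$ then $I_z^{n,k_n}$ meets $C_n\subseteq\sca[-c,c]^d$, and since $t_{n,k_n}^\ell\le\scal{\ell}$ (because $k_n\ge 1$) the whole box lies in $\sca[-(c+1),c+1]^d$. Thus $\dnp\subseteq\Zd\cap\sca[-(c+1),c+1]^d$, and this set depends only on $(\scal{\ell})$ and the constant $c+1$, not on $(k_n)$; taking it as $K_n$ proves the claim.

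For the connected case the plan is to derive \eqref{eq:geoassumption1} from \eqref{ass:boundedintvolumes}, after which \ref{eq:geomthm1new}--\ref{eq:geomthm4new} follow from the above. Rescaling by $\bb{c_n^{-1}}$, the set $\widehat C_n=\bigcup_i\widehat C_{n,i}$ is connected, so its diameter is at most $\sum_i\mathrm{diam}(\widehat C_{n,i})$; since a convex body contains a segment of length equal to its diameter, monotonicity of $V_1$ gives $\mathrm{diam}(\widehat C_{n,i})\le c_d V_1(\widehat C_{n,i})$, and \eqref{ass:boundedintvolumes} bounds $\sum_i V_1(\widehat C_{n,i})$. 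Hence $\widehat C_n$ has diameter bounded uniformly in $n$, and as $0\in C_n$ forces $0\in\widehat C_n$ this places $\widehat C_n$ in a fixed cube $[-c,c]^d$, i.e. $C_n\subseteq\sca[-c,c]^d$. This is \eqref{eq:geoassumption1}, so the full Assumption~\ref{ass:Cnassumption} holds and the three parts apply verbatim.
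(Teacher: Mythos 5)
Your proof is correct and, for parts \ref{eq:geomthm1new}--\ref{eq:geomthm4new}, essentially identical in strategy to the paper's: both reduce everything to showing that the number of boundary boxes $q_{n,k_n}-p_{n,k_n}$ is $o(k_n)$, by rescaling with $\bb{c_n^{-1}}$, trapping the boundary boxes in a collar $\partial(\bb{c_n^{-1}}C_n)\oplus B(O(k_n^{-1/d}))$, and controlling the collar's volume via Steiner-type bounds and \eqref{ass:boundedintvolumes}. The only difference there is that you re-derive the collar estimate (outer layer by Steiner's formula, inner layer by $\abs{K}-\abs{K\ominus B(\rho)}\le 2\rho V_{d-1}(K)$), whereas the paper outsources exactly this inequality to \cite[Corollary~1]{StehrRonnNielsen2020}; the content is the same. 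Where you genuinely diverge is the final claim for connected $C_n$: the paper runs the same box construction with $k_n\equiv 1$, shows $q_{n,1}$ is bounded, and uses connectivity of the covering boxes together with $0\in C_n$ to trap $Q_{n,1}$ in a fixed cube. You instead bound $\mathrm{diam}(\bb{c_n^{-1}}C_n)$ directly by $\sum_i \mathrm{diam}(\bb{c_n^{-1}}C_{n,i})\le c_d\sum_i V_1(\bb{c_n^{-1}}C_{n,i})$, using the chain argument for connected finite unions and monotonicity of $V_1$. Your route is somewhat more direct and avoids a second pass through the box-counting machinery; the paper's route reuses an estimate already established. Both are valid.
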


%----------------------------
	\section{Behavior of the maximum}
	\label{sec:independenceapproximation}
%----------------------------

In this paper we consider a stationary random field $(\xi_v)_{v\in \Zd}$ and a sequence of index sets $(D_n)_{n\in \NN}$ with $D_n \subseteq\Zd$ satisfying Assumption~\ref{ass:Cnassumption} above. Under the assumption of certain mixing and local conditions, we present results relating the distribution of the tail of the individual variables to the distribution of the maximum of the field over $D_n$. 
As mentioned in the previous section, the geometric approximation used in this paper is to some extend a generalization of that of \cite{StehrRonnNielsen2020} in that the approximation is constructed with $n$-dependent $k_n$. Moreover, the conditions and associated results of this section naturally generalize those of \cite{StehrRonnNielsen2020} by allowing index sets to increase with a different pace in different directions and, perhaps more importantly, a certain degree of clustering of extremes.

Before presenting conditions and results, we introduce some relevant notation used throughout the paper. For a $d$-dimensional vector $\bb{\gamma}$, we say that two subsets $A,B$ of $\Zd$ are $\bb{\gamma}$-separated if $B \subseteq (A \oplus\bb{\gamma}B(1))^c$. If $\bb{\gamma}=(\gamma,\dots,\gamma)$ for some $0<\gamma<\infty$, we say that the sets are $\gamma$-separated.
Moreover, for two vectors, as for instance $\bb{\gamma_n}=(\gamma_{n,1},\dots,\gamma_{n,d})$ and $\bb{c_n}=(c_{n,1},\dots,c_{n,d})$ used below, we write $\bb{\gamma_n} = o(\bb{c_n})$ if $\gamma_{n,\ell}/c_{n,\ell}\to 0$ for each $\ell=1,\dots,d$. Lastly, for any $A\subseteq \Zd$ we shorten $\max_{v\in A} \xi_v$ by $M_\xi(A)$.

In the remainder of the paper we consider index sets $(D_n)_{n\in\NN}$ satisfying Assumption~\ref{ass:Cnassumption}. In particular, Theorem~\ref{thm:maingeometrictheorem1} holds. In fact, the mixing condition $\mathcal{D}(x_n;K_n)$ given shortly is described in terms of the surrounding box $K_n \supseteq D_{n,k_n}^+$ that exists due to Theorem~\ref{thm:maingeometrictheorem1}\ref{eq:geomthm4new}. 

\begin{cond1}
\label{cond:mixingcond}
The condition $\mathcal{D}(x_n;K_n)$ is satisfied for the stationary field $(\xi_v)_\vZ$ if there exists an increasing sequence $(\bb{\gamma_n})$ of $d$-dimensional vectors such that: 1) $\bb{\gamma_n}=o(\bb{c_n})$ as $n\to\infty$ and 2) for all $n\in \NN$ and all $\bb{\gamma_n}$-separated sets $A,B \subseteq K_n$
where at least one is a box, it holds that
\begin{equation}\label{eq:mixingcond}
	\abs[\big]{
	\PP (M_\xi(A \cup B) \le x_n ) -
	\PP (M_\xi(A) \le x_n )
	\PP (M_\xi(B) \le x_n )
	}
	\le \alpha_n,
\end{equation}
where $\alpha_n\to 0$ as $n\to \infty$.
\end{cond1}

\begin{example}\label{ex:mdependent}
Suppose $(\xi_v)_{v\in\Zd}$ is an $m$-dependent random field for some $m\in\NN$: For all $m$-separated sets $A,B\subseteq\Zd$ it holds that $(\xi_v)_{v\in A}$ and $(\xi_v)_{v\in B}$ are independent. If furthermore $(D_n)_{n\in\NN}$ is a sequence of sets satisfying Assumption~\ref{ass:Cnassumption}, then for any sequence $(x_n)$ the condition $\mathcal{D}(x_n;K_n)$ is satisfied. The sequence $(\bb{\gamma_n})$ should satisfy $\bb{\gamma_n}=o(\bb{c_n})$ as $n\to\infty$ and $\gamma_{n,\ell}\geq m$ eventually for each $\ell=1,\ldots,d$. Note that $(\alpha_n)$ can be chosen to be 0 eventually.
\end{example}

If $(\xi_v)_\vZ$ is an i.i.d. random field, it is well known that
\begin{equation}\label{eq:iidconv}
	\abs{D_n} \PP(\xi_0>x_n) \to \tau
	\quad \text{if and only if}\quad
	\PP(M_\xi(D_n) \le x_n) \to \exp(-\tau)
\end{equation}
as $n\to\infty$; see e.g. \cite[Theorem~1.5.1.]{Leadbetter1983}.
However, as the following example illustrates, the approximate independence implied by the mixing condition $\mathcal D$ is not enough to guarantee the equivalence \eqref{eq:iidconv} to hold true for dependent fields. The example is a spatial generalization of the classical example of a one-dimensional process with local maximum occurring in clusters of a given size; see e.g. \cite[Example~4.4.2]{Embrecths1997}.

\begin{example}\label{ex:clusterexample}
Let $(D_n)_{n\in\NN}$ be a sequence of sets satisfying Assumption~\ref{ass:Cnassumption}, and let $F$ be a distribution function chosen such that 
\[
	\abs{D_n} \FF(x_n) = \abs{D_n}(1- F(x_n)) \to \tau
\]
as $n\to\infty$ for some sequence $(x_n)$ and some $\tau\in(0,\infty)$. Let $B \subseteq \Zd$ be a finite set with $\abs{B}\ge 1$, and let $(Y_z)_{z\in\Zd}$ be a field of i.i.d. random variables with common distribution function $F^{1/\abs{B}}$. Define the stationary field $(\xi_v)_\vZ$ by
\[
	\xi_v = \max_{z\in v + B} Y_z ,
\]
and note that $\xi_v$ has distribution function $F$ and that $(\xi_v)_{v\in\Zd}$ is $m$-dependent for $m$ large relative to the span of $B$. Thus condition $\mathcal{D}(x_n;K_n)$ is satisfied, cf. Example~\ref{ex:mdependent}. From the convergence of the tail of $F$ it is seen that
\begin{equation}\label{eq:example1}
	\abs{D_n} \PP(Y_0 > x_n) \to \tau/\abs{B} ,
\end{equation}
which will be used shortly.
Appealing to Lemma~\ref{lem:applemma1} in Appendix~\ref{appnA} we have that the number of points $\abs{D_n \oplus B}$ in the Minkowski sum $D_n \oplus B$ is asymptotically equivalent to $\abs{D_n}$. For $n\to\infty$, we therefore conclude that
\begin{align*}
	\PP \bigl(\max_{v\in D_n} \xi_v \le x_n \bigr) &
	= \PP \bigl(\max_{z\in D_n \oplus B} Y_v \le x_n \bigr) \\&
	= \PP\bigl( Y_0 \le x_n \bigr)^{\abs{D_n \oplus B}}\\&
	\sim \PP\bigl( Y_0 \le x_n \bigr)^{\abs{D_n}} \\&
	\to \exp\bigl(-\tau/\abs{B} \bigr) ,
\end{align*}
where the asymptotic equivalence and the convergence follow from \eqref{eq:example1} by standard arguments.
\end{example}

In fact, under the assumption of the mixing condition above, the distribution function of the maximum $M_\xi(D_n)$ is close to the $k_n$th power of the distribution function of the maximum taken over certain $\bb{\gamma_n}$-separated boxes. 
More precisely, the lemma below holds with subsets $H_z^{n,k_n}$ of $J_z^{n,k_n}$ defined by
\[
	H_z^{n,k_n}
	= \bigl\{
		u\in\Zd\::\:
		z_\ell t_{n,k_n}^\ell \le u_\ell \le 
		(z_\ell+1) t_{n,k_n}^\ell - 1 - \gamma_{n,\ell},\ 
		\text{for all }\ell=1,\dots,d
	\bigr\} .
\]
Note that they are indeed $\bb{\gamma_n}$-separated for varying $z\in\Zd$.

\begin{lemma}\label{lem:Happrox}
Let $(D_n)_{n\in\NN}$ be a sequence of sets satisfying Assumption~\ref{ass:Cnassumption}, and let $(\xi_v)_\vZ$ be a stationary field satisfying $\mathcal{D}(x_n;K_n)$ for some sequence $(x_n)_{n\in\NN}$. If $k_n$ is a sequence of integers satisfying $k_n\to\infty$, $k_n \alpha_n \to 0$ and $k_n^{1/d} \bb{\gamma_n} = o(\bb{c_n})$ as $n\to\infty$, then
\begin{equation}\label{eq:knapprox0}
	\PP\bigl(M_\xi(D_n) \le x_n\bigr) \le 
	\PP^{k_n}\bigl(M_\xi(H_0^{n,k_n}) \le x_n\bigr) + o(1)
\end{equation}
as $n\to\infty$. If furthermore
\begin{equation}\label{eq:tailcondition0}
	\limsup_{n\to\infty} \abs{D_n} \PP(\xi_0>x_n) < \infty ,
\end{equation}
then
\begin{equation}\label{eq:knapprox}
\begin{aligned}
	\PP\bigl(M_\xi(D_n) \le x_n\bigr) & 
	= \PP^{k_n}\bigl(M_\xi(H_0^{n,k_n}) \le x_n\bigr) + o(1) \\ & 
	= \PP^{k_n}\bigl(M_\xi(J_0^{n,k_n}) \le x_n\bigr) + o(1)
\end{aligned}
\end{equation}
as $n\to\infty$.
\end{lemma}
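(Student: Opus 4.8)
The plan is to sandwich $D_n$ between the inner and outer approximations $\dnm \subseteq D_n \subseteq \dnp$ from \eqref{fml:Cnplusminus1}, and to replace each tiling box $J_z^{n,k_n}$ by its shrunken version $H_z^{n,k_n}$: the latter are $\bb{\gamma_n}$-separated for varying $z$ and hence amenable to the mixing condition $\mathcal{D}(x_n;K_n)$, whereas the adjacent boxes $J_z^{n,k_n}$ cannot be decoupled directly. Throughout I write $u_n = \PP(M_\xi(H_0^{n,k_n}) \le x_n)$, which by stationarity equals $\PP(M_\xi(H_z^{n,k_n}) \le x_n)$ for every $z$, and $v_n = 1 - u_n$. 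For the upper bound I would use $\bigcup_{z \in P_{n,k_n}} H_z^{n,k_n} \subseteq \dnm \subseteq D_n$ to get $\PP(M_\xi(D_n) \le x_n) \le \PP(M_\xi(\bigcup_{z \in P_{n,k_n}} H_z^{n,k_n}) \le x_n)$; for the lower bound I would use $D_n \subseteq \dnp$ to get $\PP(M_\xi(D_n) \le x_n) \ge \PP(M_\xi(\dnp) \le x_n)$.

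The workhorse is an iterated application of $\mathcal{D}(x_n;K_n)$: enumerating the relevant boxes and peeling off one $H_z^{n,k_n}$ at a time, the already accumulated union and the next single box are $\bb{\gamma_n}$-separated (pairwise separation of $H_z^{n,k_n}$ from each $H_{z'}^{n,k_n}$ forces separation from their union), one of the two sets is a box, and all sets lie in $K_n$ by Theorem~\ref{thm:maingeometrictheorem1}\ref{eq:geomthm4new}, so \eqref{eq:mixingcond} applies at every step. A telescoping estimate bounding $\abs{\PP(M_\xi(\bigcup_z H_z^{n,k_n}) \le x_n) - \prod_z \PP(M_\xi(H_z^{n,k_n}) \le x_n)}$ by (number of boxes)$\,\cdot\,\alpha_n$ then gives, by stationarity, $\PP(M_\xi(\bigcup_{z\in P_{n,k_n}} H_z^{n,k_n}) \le x_n) \le u_n^{p_{n,k_n}} + p_{n,k_n}\alpha_n$ and, symmetrically, $\PP(M_\xi(\bigcup_{z\in Q_{n,k_n}} H_z^{n,k_n}) \le x_n) \ge u_n^{q_{n,k_n}} - q_{n,k_n}\alpha_n$. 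Since $p_{n,k_n}/k_n \to 1$ and $q_{n,k_n}/k_n \to 1$ by Theorem~\ref{thm:maingeometrictheorem1}\ref{eq:geomthm2new}, and $k_n\alpha_n \to 0$, both error terms are $o(1)$.

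For the lower bound I must still discard the margins. Writing $\dnp = (\bigcup_{z\in Q_{n,k_n}} H_z^{n,k_n}) \cup (\bigcup_{z\in Q_{n,k_n}} (J_z^{n,k_n}\setminus H_z^{n,k_n}))$ and using $\PP(A \cap B) \ge \PP(A) - \PP(B^c)$, it remains to show $\PP(M_\xi(\bigcup_{z\in Q_{n,k_n}}(J_z^{n,k_n}\setminus H_z^{n,k_n})) > x_n) = o(1)$. A union bound reduces this to (total number of margin lattice points)$\,\cdot\,\PP(\xi_0 > x_n)$, and here the geometric hypothesis enters: each margin has at most $\sum_\ell \gamma_{n,\ell}\prod_{\ell'\ne\ell} t_{n,k_n}^{\ell'}$ points, so multiplying by $q_{n,k_n}\sim k_n$ and inserting $t_{n,k_n}^\ell \sim c_{n,\ell}/k_n^{1/d}$ together with $\prod_\ell c_{n,\ell} \sim \abs{C_n}\sim \abs{D_n}$ yields a total of order $\sum_\ell (k_n^{1/d}\gamma_{n,\ell}/c_{n,\ell})\,\abs{D_n} = o(\abs{D_n})$, precisely because $k_n^{1/d}\bb{\gamma_n} = o(\bb{c_n})$. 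Hence the margin exceedance probability is $o(\abs{D_n}\PP(\xi_0 > x_n))$, which is $o(1)$ under \eqref{eq:tailcondition0}. Combining, $\PP(M_\xi(D_n)\le x_n) \ge u_n^{q_{n,k_n}} + o(1)$, while the upper bound reads $\PP(M_\xi(D_n)\le x_n) \le u_n^{p_{n,k_n}} + o(1)$.

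It remains to convert the exponents to $k_n$ and to pass from $H_0^{n,k_n}$ to $J_0^{n,k_n}$. Under \eqref{eq:tailcondition0} the union bound $v_n \le \abs{H_0^{n,k_n}}\PP(\xi_0 > x_n) = O(\abs{D_n}/k_n)\PP(\xi_0 > x_n) = O(1/k_n)$ makes $k_n v_n$ bounded; together with $\abs{p_{n,k_n}-k_n}=o(k_n)$, $\abs{q_{n,k_n}-k_n}=o(k_n)$ and the elementary bound $\abs{u_n^a - u_n^b} \le \abs{a-b}\,v_n$ (valid for $u_n\in[0,1]$), this gives $u_n^{p_{n,k_n}} = u_n^{k_n} + o(1) = u_n^{q_{n,k_n}}$, and the sandwich forces the first equality in \eqref{eq:knapprox}. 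Replacing $H_0^{n,k_n}$ by $J_0^{n,k_n}$ uses the single-box margin count $0 \le u_n - \PP(M_\xi(J_0^{n,k_n})\le x_n) \le \abs{J_0^{n,k_n}\setminus H_0^{n,k_n}}\PP(\xi_0>x_n) = o(1/k_n)$ followed by $\abs{a^{k_n}-b^{k_n}} \le k_n\abs{a-b}$, giving the second equality. The stand-alone upper bound \eqref{eq:knapprox0} is claimed without \eqref{eq:tailcondition0}, but the exponent conversion survives by a short case split: when $k_n v_n$ is bounded the above applies, and when $k_n v_n \to \infty$ both $u_n^{p_{n,k_n}} \le e^{-p_{n,k_n}v_n}$ and $u_n^{k_n} \le e^{-k_n v_n}$ vanish. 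I expect the margin estimate of the third paragraph to be the main obstacle: one must carefully count the lattice points lost in shrinking the $\sim k_n$ tiling boxes $J_z^{n,k_n}$ to the separated $H_z^{n,k_n}$ and show it is negligible against $\abs{D_n}$, which is exactly what the rate condition $k_n^{1/d}\bb{\gamma_n}=o(\bb{c_n})$ is designed to ensure, and keeping the geometric asymptotics consistent is the most delicate bookkeeping.
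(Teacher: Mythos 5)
Your argument is correct, and for the inequality \eqref{eq:knapprox0} it is identical to the paper's: use $\bigcup_{z\in P_{n,k_n}}H_z^{n,k_n}\subseteq\dnm\subseteq D_n$, peel off the $\bb{\gamma_n}$-separated boxes $H_z^{n,k_n}$ one at a time with $\mathcal D(x_n;K_n)$, and finish with $p_{n,k_n}\sim k_n$ and $k_n\alpha_n\to 0$. For the equality \eqref{eq:knapprox} you take a somewhat different, self-contained route: the paper invokes \cite[Lemma~3]{StehrRonnNielsen2020} to obtain the exact decomposition $\PP(M_\xi(D_n)\le x_n)=\PP^{k_n}(M_\xi(J_0^{n,k_n})\le x_n)+k_n\PP(M_\xi(H_0^{n,k_n})\le x_n<M_\xi(H_0^*))+o(1)$ and then shows the correction term vanishes, whereas you bound $\PP(M_\xi(D_n)\le x_n)$ from below by $\PP(M_\xi(\dnp)\le x_n)$, split $\dnp$ into the separated cores $\bigcup_z H_z^{n,k_n}$ and the margins, and control the margin exceedance probability by a union bound before converting the exponents $p_{n,k_n},q_{n,k_n}$ to $k_n$. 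Both routes reduce to the same key estimate, namely that the total margin mass is of order $\sum_\ell (k_n^{1/d}\gamma_{n,\ell}/c_{n,\ell})\,\abs{D_n}\,\PP(\xi_0>x_n)\to 0$, which is exactly where $k_n^{1/d}\bb{\gamma_n}=o(\bb{c_n})$ and \eqref{eq:tailcondition0} enter in the paper as well; your version avoids relying on the cited external lemma at the cost of the explicit exponent bookkeeping, which you handle correctly via $k_n(1-\PP(M_\xi(H_0^{n,k_n})\le x_n))=O(1)$ under \eqref{eq:tailcondition0} and the subsequence case split without it.
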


Before proving the lemma, we give a brief remark on the equality \eqref{eq:knapprox}. In the one-dimensional case, it is stated in \cite{Leadbetter1983a} that the equality holds even without the tail assumption \eqref{eq:tailcondition0}. This is however not the case unless stronger assumptions on the increase rate of $k_n$ are made.

\begin{proof}
Using the set $\dnm$ introduced in \eqref{eq:plusandminussets1} and the fact that $H_z^{n,k_n} \subseteq J_z^{n,k_n}$, we find that
\begin{align*}
	\PP\bigl( M_\xi(\dnm) \le x_n \bigr) &
	\le \PP \Bigl( \bigcap_{z\in P_{n,k_n}} \{ M_\xi(H_z^{n,k_n}) \le x_n \} \Bigr) \\&
	\le \PP^{p_{n,k_n}} \bigl( M_\xi(H_0^{n,k_n}) \le x_n \bigr) + (p_{n,k_n}-1) \alpha_n ,
\end{align*}
where the last inequality follows from the mixing condition \eqref{eq:mixingcond} by induction. 
Since $p_{n,k_n}\sim k_n$ as shown in Theorem~\ref{thm:maingeometrictheorem1}\ref{eq:geomthm2new}, and $k_n\alpha_n \to 0$ by assumption, the inequality \eqref{eq:knapprox0} follows as 
\[
	\PP \bigl( M_\xi(D_{n}) \le x_n \bigr) 
	\le \PP\bigl(M_\xi(\dnm) \le x_n \bigr) .
\]

Now assume that \eqref{eq:tailcondition0} is satisfied, and let $H_0^{*}=J_0^{n,k_n} \setminus H_0^{n,k_n}$
(omitting the $n,k_n$ dependence of $H_0^*$). 
Note that, up to a constant, 
\[
	\abs{H_0^{*}} \sim 
	\Bigl(\sum_{\ell=1}^d \frac{\gamma_{n,\ell}}{t_{n,k_n}^\ell} \Bigr)
	\prod_{\ell=1}^d t_{n,k_n}^\ell  .
\]
Following the lines of \cite[Lemma~3]{StehrRonnNielsen2020} (which is under the set expansion given in Example~\ref{ex:simpleassumption} though) and utilizing that $\dnm \subseteq D_{n} \subseteq \dnp$, $p_{n,k_n} \sim q_{n,k_n}\sim k_n$ and $k_n\alpha_n \to 0$ yield
\begin{align*}
	\MoveEqLeft[5]	
	\PP \bigl( M_\xi(D_{n}) \le x_n \bigr)
	= \PP^{k_n} \bigl( M_\xi(J_0^{n,k_n}) \le x_n \bigr) \\ &
	+
	k_n \PP\bigl( M_\xi(H_0^{n,k_n}) \le x_n < M_\xi(H_0^*)\bigr) + o(1)
\end{align*}
as $n\to\infty$. 
Since each $t_{n,k_n}^\ell \sim c_{n,\ell}/k_n^{1/d}$ giving in particular that $\prod_\ell t_{n,k_n}^\ell \sim \abs{D_n}/k_n$, we obtain
\begin{align*}
	k_n \PP\bigl( M_\xi(H_0^{n,k_n}) \le x_n < M_\xi(H_0^*)\bigr) &
	\le k_n \PP\bigl( M_\xi(H_0^*)>x_n\bigr) \\ &
	\le k_n \abs{H_0^*} \PP(\xi_0 >x_n) \\ &
	\sim \Bigl(\sum_{\ell=1}^d \frac{k_n^{1/d}\gamma_{n,\ell}}{c_{n,\ell}} \Bigr) \abs{D_n} \PP(\xi_0 >x_n)
\end{align*}
as $n\to\infty$. By the tail assumption \eqref{eq:tailcondition0} and the assumption that $k_n^{1/d} \bb{\gamma_n} = o(\bb{c_n})$ we conclude that
\begin{equation}\label{eq:helpequation1}
	k_n \PP\bigl( M_\xi(H_0^{n,k_n}) \le x_n < M_\xi(H_0^*)\bigr) \to 0
\end{equation}
as $n\to\infty$. This gives one of the equalities in \eqref{eq:knapprox}. The fact that $J_0^{n,k_n}$ can be substituted by $H_0^{n,k_n}$ in \eqref{eq:knapprox} is a consequence of \eqref{eq:helpequation1} since
\begin{align*}
	0 & \le 
	\PP^{k_n} \bigl( M_\xi(H_0^{n,k_n}) \le x_n \bigr) -
	\PP^{k_n} \bigl( M_\xi(J_0^{n,k_n}) \le x_n \bigr) \\ &
	\le k_n \PP\bigl( M_\xi(H_0^{n,k_n}) \le x_n < M_\xi(H_0^*)\bigr).
\end{align*}
This concludes the proof.
\end{proof}

Let $\preceq$ be an arbitrary translation invariant total order on $\Zd$ and define
\[
	A_{v}^{(m)}
	= \{ z \in v + [-m,m]^d \cap \Zd) 
	\::\: v \prec z
	\} 
\]
for an integer $m \in\NN$ and $\vZ$. Similarly, we define
\[
	A_{v}^{n,k_n}
	= \{ z \in v + \bb{t_{n,k_n}}\,[-1,1]^d \cap \Zd) 
	\::\: v \prec z
	\} ,
\]
which has varying side-length given by the $d$-dimensional vector $\bb{t_{n,k_n}}$.

\begin{lemma}\label{lem:maxrelation}
Let $(D_n)_{n\in\NN}$ be a sequence of sets satisfying Assumption~\ref{ass:Cnassumption}, and let $(\xi_v)_\vZ$ be a stationary field satisfying $\mathcal{D}(x_n;K_n)$ for some sequence $(x_n)_{n\in\NN}$. If $k_n$ is a sequence of integers satisfying $k_n\to\infty$, $k_n \alpha_n \to 0$ and $k_n^{1/d} \bb{\gamma_n} = o(\bb{c_n})$ as $n\to\infty$, then
\begin{equation}\label{eq:knineq1}
	\PP \bigl(M_\xi(D_n)\le x_n \bigr) 
	\le 
	\exp \Bigl( - \abs{D_n} 
	\PP\bigl(M_\xi(A_0^{n,k_n})\le x_n < \xi_0 \bigr) \Bigr) + o(1)
\end{equation}
as $n\to\infty$. If furthermore \eqref{eq:tailcondition0} is satisfied then
\begin{equation}\label{eq:knequality}
	\PP \bigl(M_\xi(D_n)\le x_n \bigr) 
	=
	\exp \Bigl( - \abs{D_n} 
	\PP\bigl(M_\xi(A_0^{n,k_n})\le x_n < \xi_0 \bigr) \Bigr) + o(1)
\end{equation}
as $n\to\infty$.
\end{lemma}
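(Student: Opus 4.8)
The plan is to combine the block approximation of Lemma~\ref{lem:Happrox} with a decomposition of the block maximum according to its $\preceq$-largest exceedance. Write $\beta_n=\PP(M_\xi(A_0^{n,k_n})\le x_n<\xi_0)$, so that the right-hand sides of \eqref{eq:knineq1}--\eqref{eq:knequality} read $\exp(-\abs{D_n}\beta_n)+o(1)$. The geometric input is that, for every $v$ in the box $J_0^{n,k_n}$, one has $J_0^{n,k_n}\subseteq v+\bb{t_{n,k_n}}[-1,1]^d$, hence $A_v^{n,k_n}\cap J_0^{n,k_n}=\{z\in J_0^{n,k_n}:v\prec z\}$ (and likewise with $H_0^{n,k_n}$). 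Decomposing $\{M_\xi(J_0^{n,k_n})>x_n\}$ by its $\preceq$-largest exceedance, the summand at $v$ equals $\PP(\xi_v>x_n, M_\xi(A_v^{n,k_n}\cap J_0^{n,k_n})\le x_n)$, which by stationarity and $A_v^{n,k_n}\cap J_0^{n,k_n}\subseteq A_v^{n,k_n}$ is at least $\beta_n$. Summing yields both $\PP(M_\xi(H_0^{n,k_n})>x_n)\ge\abs{H_0^{n,k_n}}\beta_n$ and the exact identity $a_n:=\PP(M_\xi(J_0^{n,k_n})>x_n)=\abs{J_0^{n,k_n}}\beta_n+\Delta_n$ with a nonnegative correction $\Delta_n=\sum_{v\in J_0^{n,k_n}}\PP(\xi_v>x_n, M_\xi(A_v^{n,k_n}\cap J_0^{n,k_n})\le x_n, M_\xi(A_v^{n,k_n}\setminus J_0^{n,k_n})>x_n)$.

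For the inequality \eqref{eq:knineq1} I would start from \eqref{eq:knapprox0}, set $b_n=\PP(M_\xi(H_0^{n,k_n})>x_n)$, and bound $(1-b_n)^{k_n}\le e^{-k_n b_n}\le e^{-k_n\abs{H_0^{n,k_n}}\beta_n}$. Since $t_{n,k_n}^\ell\sim c_{n,\ell}/k_n^{1/d}$ and $\gamma_{n,\ell}/t_{n,k_n}^\ell\to0$, the ratio $\rho_n:=k_n\abs{H_0^{n,k_n}}/\abs{D_n}\to1$ (using $\abs{D_n}\sim\abs{C_n}$ from Theorem~\ref{thm:maingeometrictheorem1}). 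The passage from $\exp(-\rho_n\abs{D_n}\beta_n)$ to $\exp(-\abs{D_n}\beta_n)+o(1)$ then follows from the elementary uniform fact $\sup_{r\ge0}(e^{-\rho r}-e^{-r})\to0$ as $\rho\to1$; crucially this requires no control on the size of $\abs{D_n}\beta_n$, which is why \eqref{eq:knineq1} holds without \eqref{eq:tailcondition0}.

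For the equality \eqref{eq:knequality} I would instead use the $J_0^{n,k_n}$-version \eqref{eq:knapprox}. Under \eqref{eq:tailcondition0} we have $a_n\le\abs{J_0^{n,k_n}}\PP(\xi_0>x_n)\to0$ and $k_n a_n$ bounded (since $k_n\abs{J_0^{n,k_n}}\sim\abs{D_n}$), so $(1-a_n)^{k_n}=e^{-k_n a_n}+o(1)$. By the identity above, $k_n a_n=k_n\abs{J_0^{n,k_n}}\beta_n+k_n\Delta_n=\abs{D_n}\beta_n+o(1)+k_n\Delta_n$, so everything reduces to proving $k_n\Delta_n\to0$. Because the events defining $\Delta_n$ are disjoint (each pins down the $\preceq$-largest exceedance of the block) and $A_v^{n,k_n}\setminus J_0^{n,k_n}$ lies in the shell $\widetilde J_0:=(J_0^{n,k_n}\oplus\bb{t_{n,k_n}}[-1,1]^d)\setminus J_0^{n,k_n}$, one has $\Delta_n\le\PP(M_\xi(J_0^{n,k_n})>x_n, M_\xi(\widetilde J_0)>x_n)$.

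This last step is the main obstacle: $J_0^{n,k_n}$ and its shell $\widetilde J_0$ are adjacent, so a naive pointwise use of \eqref{eq:mixingcond} would accumulate an error of order $\abs{D_n}\alpha_n$, which need not vanish under $k_n\alpha_n\to0$. The remedy is to split $\widetilde J_0$ by distance $\bb{\gamma_n}$ from the block. The near part $\widetilde J_0^{\mathrm{near}}$ is a boundary layer with $\abs{\widetilde J_0^{\mathrm{near}}}\lesssim\abs{J_0^{n,k_n}}\sum_\ell\gamma_{n,\ell}/t_{n,k_n}^\ell$, contributing at most $k_n\abs{\widetilde J_0^{\mathrm{near}}}\PP(\xi_0>x_n)\sim\abs{D_n}\PP(\xi_0>x_n)\sum_\ell\gamma_{n,\ell}/t_{n,k_n}^\ell\to0$ by \eqref{eq:tailcondition0} and $k_n^{1/d}\bb{\gamma_n}=o(\bb{c_n})$. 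The far part $\widetilde J_0^{\mathrm{far}}$ is $\bb{\gamma_n}$-separated from the box $J_0^{n,k_n}$, so a single application of \eqref{eq:mixingcond} per block gives $\PP(M_\xi(J_0^{n,k_n})>x_n, M_\xi(\widetilde J_0^{\mathrm{far}})>x_n)\le a_n\abs{\widetilde J_0^{\mathrm{far}}}\PP(\xi_0>x_n)+\alpha_n$; multiplying by $k_n$ and using $k_n\alpha_n\to0$ together with $(\abs{D_n}\PP(\xi_0>x_n))^2/k_n\to0$ makes this vanish. Hence $k_n\Delta_n\to0$, which yields $k_n a_n=\abs{D_n}\beta_n+o(1)$ and therefore \eqref{eq:knequality}.
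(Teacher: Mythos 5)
Your argument is correct, and it splits naturally into two halves. The first half --- the upper bound \eqref{eq:knineq1} --- is essentially the paper's proof: starting from \eqref{eq:knapprox0}, bounding $\PP(M_\xi(H_0^{n,k_n})>x_n)$ from below by $\abs{H_0^{n,k_n}}\,\PP(M_\xi(A_0^{n,k_n})\le x_n<\xi_0)$ via the translation-invariant order and stationarity, and then using $\abs{H_0^{n,k_n}}\sim\abs{D_n}/k_n$ together with the uniform estimate $\sup_{r\ge0}(e^{-\rho r}-e^{-r})\to0$ as $\rho\to1$ (which the paper leaves implicit but relies on equally). The second half is where you genuinely diverge: for the reverse inequality the paper simply invokes the argument leading to the reverse of inequality (3.4) in the cited proof of \cite[Theorem~3.1]{Soja2019}, whereas you reconstruct it from scratch via the exact decomposition $\PP(M_\xi(J_0^{n,k_n})>x_n)=\abs{J_0^{n,k_n}}\beta_n+\Delta_n$ over the $\preceq$-largest exceedance, with the correction $\Delta_n$ bounded by a joint exceedance of the block and its shell. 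Your treatment of $k_n\Delta_n$ is the right one and is sound: the near layer of the shell is killed by \eqref{eq:tailcondition0} and $k_n^{1/d}\bb{\gamma_n}=o(\bb{c_n})$ exactly as in \eqref{eq:helpequation1}, and the far layer is killed by a single application of \eqref{eq:mixingcond} per block (legitimate since $J_0^{n,k_n}$ is a box and the far layer is $\bb{\gamma_n}$-separated from it and contained in $K_n$ for large $n$), with the product term of order $(\abs{D_n}\PP(\xi_0>x_n))^2/k_n\to0$ and the mixing error controlled by $k_n\alpha_n\to0$. What your route buys is a self-contained proof that makes explicit where each hypothesis ($k_n\alpha_n\to0$, $k_n^{1/d}\bb{\gamma_n}=o(\bb{c_n})$, and \eqref{eq:tailcondition0}) enters the reverse direction; what the paper's citation buys is brevity. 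I see no gap.
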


\begin{proof}
Using the fact that
\begin{equation}\label{eq:Obrien}
	a_n^n - \exp(-n(1-a_n)) \to 0,
	\qquad a_n\in [0,1],
\end{equation}
see \cite[Formula~(2.8)]{Obrien1987}, it follows from \eqref{eq:knapprox0} that
\begin{equation}\label{eq:knapprox01}
	\PP\bigl(M_\xi(D_n) \le x_n\bigr) 
	\le 
	\exp\Bigl(
	- k_n	
	\PP\bigl(M_\xi(H_0^{n,k_n}) > x_n\bigr) \Bigr) + o(1) .
\end{equation}
As the order $\preceq$ is translation invariant we obtain the inequality
\begin{align*}
	\PP\bigl(M_\xi(H_0^{n,k_n}) > x_n\bigr) & 
	= \sum_{v\in H_0^{n,k_n}} \PP \bigl( \, \max_{v \prec z \in H_0^{n,k_n}} \xi_z \le x_n < \xi_v \bigr) \\ &
	\ge \sum_{v\in H_0^{n,k_n}} \PP \bigl(M_\xi(A_v^{n,k_n}) \le x_n < \xi_v \bigr) \\ &
	= \abs{H_0^{n,k_n}} \PP \bigl(M_\xi(A_0^{n,k_n}) \le x_n < \xi_0 \bigr) .
\end{align*}
Since $k_n^{1/d} \bb{\gamma_n} = o(\bb{c_n})$, which gives that $\abs{H_0^{n,k_n}} \sim \abs{D_n}/k_n$, formula \eqref{eq:knapprox01} now implies \eqref{eq:knineq1}.

For the opposite inequality assume that \eqref{eq:tailcondition0} is satisfied. Then, due to \eqref{eq:knapprox} and \eqref{eq:Obrien},
\begin{equation}\label{eq:limitexpequality}
	\PP\bigl(M_\xi(D_n)\le x_n \bigr) = 
	\exp\Bigl(-k_n \PP\bigl(M_\xi(J_0^{n,k_n})> x_n \bigr)
	\Bigr) + o(1) . 
\end{equation}
Since also $\abs{J_0^{n,k_n}} \sim \abs{D_n}/k_n$ we obtain that
\begin{equation*}%\label{eq:liminfcondition}
	\liminf_{n\to\infty}
	\PP\bigl(M_\xi(D_n)\le x_n \bigr) > 0 .
\end{equation*}
Utilizing this, the same arguments used in the latter part of the proof of \cite[Theorem~3.1.]{Soja2019} (more precisely those leading to the reverse of their inequality ($3.4$)) show that the reverse inequality to \eqref{eq:knineq1} also holds, and thus we obtain equality as in \eqref{eq:knequality}.
\end{proof}

With the lemma above in mind, it is natural to consider the local condition $\mathcal D^\ell$ below. The condition, that introduces potential clustering of exceedances of $x_n$, is defined for $\theta \in [0,1]$.

\begin{cond2}% [$\mathcal{D}^\ell(x_n;k_n; \theta)$]
The condition $\mathcal{D}^\ell(x_n;k_n; \theta)$ is satisfied for the stationary field $(\xi_v)_\vZ$ if 
\begin{equation}\label{eq:thetacondition}
	\PP \bigl(
		M_\xi(A_0^{n,k_n})\le x_n \mid \xi_0>x_n
	\bigr)
	\to \theta
\end{equation}
as $n\to\infty$.
\end{cond2}

The following result is a generalization of \cite[Theorem~5]{StehrRonnNielsen2020} allowing clustering of extremes in the sense of \eqref{eq:thetacondition}, and allowing the more general expansion of $(D_n)_{n\in\NN}$ as assumed in Assumption~\ref{ass:Cnassumption}.

\begin{theorem}\label{thm:maximumtheorem}
Let $(D_n)_{n\in\NN}$ be a sequence of sets satisfying Assumption~\ref{ass:Cnassumption}, and let $(\xi_v)_\vZ$ be a stationary field satisfying $\mathcal{D}(x_n;K_n)$ for some sequence $(x_n)_{n\in\NN}$. Moreover, let $k_n$ be a sequence of integers satisfying $k_n\to\infty$, $k_n \alpha_n \to 0$ and $k_n^{1/d} \bb{\gamma_n} = o(\bb{c_n})$ as $n\to\infty$, such that $\mathcal{D}^\ell(x_n;k_n; \theta)$ is satisfied for some $\theta\in (0,1]$. Then, for all $ 0 \le \tau < \infty$,
\[
	\abs{D_n} \PP(\xi_0>x_n) \to \tau
\]
if and only if
\[
	\PP\bigl(\max_{v\in D_n} \xi_v \le x_n\bigr) \to \exp(-\theta \tau) 
\]
as $n\to\infty$.
\end{theorem}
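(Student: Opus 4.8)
The plan is to read both implications off the exponential representation in Lemma~\ref{lem:maxrelation}, after decomposing its exponent by means of the local condition. Write $\tau_n = \abs{D_n}\PP(\xi_0 > x_n)$ and $\theta_n = \PP\bigl(M_\xi(A_0^{n,k_n}) \le x_n \mid \xi_0 > x_n\bigr)$. The starting point is the elementary identity
\[
	\abs{D_n}\PP\bigl(M_\xi(A_0^{n,k_n}) \le x_n < \xi_0\bigr) = \theta_n \tau_n ,
\]
obtained by conditioning on the event $\{\xi_0 > x_n\}$. By the local condition $\mathcal{D}^\ell(x_n;k_n;\theta)$ in \eqref{eq:thetacondition} we have $\theta_n \to \theta \in (0,1]$, while $\theta_n \in [0,1]$ for every $n$ as a conditional probability. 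With this identity, the exponent appearing in Lemma~\ref{lem:maxrelation} is exactly $\theta_n \tau_n$.

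For the forward implication, suppose $\tau_n \to \tau$. Then $\limsup_n \tau_n < \infty$, so the tail condition \eqref{eq:tailcondition0} holds and the equality \eqref{eq:knequality} applies, giving
\[
	\PP\bigl(M_\xi(D_n) \le x_n\bigr) = \exp(-\theta_n \tau_n) + o(1).
\]
Since $\theta_n \to \theta$ and $\tau_n \to \tau$ we have $\theta_n \tau_n \to \theta\tau$, and continuity of $\exp$ yields $\PP(M_\xi(D_n) \le x_n) \to \exp(-\theta\tau)$, the degenerate case $\tau = 0$ being included.

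The reverse implication is the more delicate one, since the equality \eqref{eq:knequality} is available only once the tail condition is known, and here it must first be derived. Suppose $\PP(M_\xi(D_n) \le x_n) \to \exp(-\theta\tau)$; as $\theta\tau < \infty$, this limit is strictly positive. I would first show $\limsup_n \tau_n < \infty$ by contradiction: if some subsequence satisfies $\tau_n \to \infty$, then $\theta_n \tau_n \to \infty$ (using $\theta > 0$, so that $\theta_n$ is eventually bounded below), whence the unconditional inequality \eqref{eq:knineq1} — which holds under the stated hypotheses \emph{without} any tail assumption — forces $\PP(M_\xi(D_n) \le x_n) \le \exp(-\theta_n\tau_n) + o(1) \to 0$ along that subsequence, contradicting the strictly positive limit. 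Thus \eqref{eq:tailcondition0} holds.

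With the tail condition established, I would invoke \eqref{eq:knequality} once more to obtain $\exp(-\theta_n\tau_n) \to \exp(-\theta\tau)$, hence $\theta_n \tau_n \to \theta\tau$; dividing by $\theta_n \to \theta > 0$ — where the hypothesis $\theta \in (0,1]$ enters in an essential way — yields $\tau_n \to \tau$, completing the argument. The main obstacle is precisely this bootstrapping in the reverse direction: extracting the a priori bound $\limsup_n \tau_n < \infty$ from the convergence of $\PP(M_\xi(D_n) \le x_n)$ alone, for which the one-sided estimate \eqref{eq:knineq1}, valid without \eqref{eq:tailcondition0}, is the crucial tool.
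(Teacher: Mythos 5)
Your proposal is correct and follows essentially the same route as the paper: the forward direction reads the limit off \eqref{eq:knequality} together with the local condition, and the reverse direction first extracts the a priori bound $\limsup_n \abs{D_n}\PP(\xi_0>x_n)<\infty$ from the one-sided inequality \eqref{eq:knineq1} and the positivity of $\theta$, then divides the exponent by $\theta_n\to\theta>0$. Your contradiction argument for the boundedness step is just a rephrasing of the paper's direct observation that \eqref{eq:knineq1} forces the exponent to have finite $\limsup$ when the limit of $\PP(M_\xi(D_n)\le x_n)$ is strictly positive.
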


\begin{proof}
Assume first that $\abs{D_n} \PP(\xi_0>x_n) \to \tau$ as $n\to\infty$ for some finite $\tau$. In particular, condition \eqref{eq:tailcondition0} is satisfied and hence also \eqref{eq:knequality} is so. Using \eqref{eq:thetacondition} it now follows easily that
\[
	\lim_{n\to\infty} \PP \bigl(M_\xi(D_n)\le x_n \bigr)
	= \exp( - \theta  \tau) .
\]

Assume conversely that
$	\lim_{n\to\infty} \PP(M_\xi(D_n)\le x_n)
	= \exp( - \theta  \tau)
$
for some finite $\tau$. By an application of \eqref{eq:knineq1} this in particular implies that
\[
	\limsup_{n\to\infty} \abs{D_n}
	\PP\bigl(M_\xi(A_0^{n,k_n})\le x_n < \xi_0 \bigr) < \infty .
\]
Since $\theta >0$ we obtain that \eqref{eq:tailcondition0} is satisfied, and thus the equality \eqref{eq:knequality} holds. Hence
\begin{align*}
	\lim_{n\to\infty} \abs{D_n} \PP(\xi_0>x_n) &
	= \lim_{n\to\infty}\frac{\log \exp\Bigl(
	- \abs{D_n} \PP\bigl(M_\xi(A_0^{n,k_n})\le x_n < \xi_0 \bigr)
	\Bigr)}{\PP \bigl(
		M_\xi(A_0^{n,k_n})\le x_n \mid \xi_0>x_n
	\bigr)} \\ &
	= \frac{\log \exp(-\theta \tau)}{\theta}
	= \tau 
\end{align*}
completing the proof.
\end{proof}

We note that a range of anti-clustering conditions are found in the literature. Below we include a condition from \cite{Soja2019} which is a multi-dimensional counterpart of the condition $D^{(m+1)}(x_n)$ from \cite{Chernick1991} and which is satisfied by e.g. $m$-dependent fields under the assumption of \eqref{eq:tailcondition0}; see \cite[Section~5.1]{Soja2019}.

\begin{cond3}% [$\mathcal{D}^{(m)}(x_n;k_n)$]
The condition $\mathcal{D}^{(m)}(x_n;k_n)$ is satisfied for the stationary field $(\xi_v)_\vZ$ if 
\begin{equation}\label{eq:anti-clustering0}
	\abs{D_n} \, \PP \Bigl(
	M_\xi(A_0^{(m)})\le x_n < \xi_0\,,\,
	M_\xi(A_0^{n,k_n}\setminus A_0^{(m)}) > x_n
	\Bigr)
	\to 0
\end{equation}
as $n\to\infty$, with the convention that $M_\xi(A_0^{(0)})=M_\xi(\emptyset) = -\infty$.
\end{cond3}

Obviously, the condition \eqref{eq:anti-clustering0} is satisfied if
\begin{equation}\label{eq:anti-clustering}
	\abs{D_n} \sum_{v\in A_0^{n,k_n}\setminus A_0^{(m)}} 
	\PP \Bigl(
	M_\xi(A_0^{(m)})\le x_n < \xi_0\,,\,
	\xi_v > x_n
	\Bigr)
	\to 0 .
\end{equation}
In fact, setting $m=0$ in \eqref{eq:anti-clustering} corresponds to the anti-clustering condition $\mathcal{D}'(x_n)$ of \cite{StehrRonnNielsen2020} (which however, was formulated slightly incorrect) now with $n$-dependent $k_n$.

Clearly, under the assumption of the anti-clustering condition $\mathcal D^{(m)}(x_n;k_n)$, the local condition $\mathcal D^\ell$ is equivalent to the much simpler convergence
\begin{equation}\label{eq:thetacondition2}
	\lim_{n\to\infty }\PP \bigl( M_\xi(A_0^{(m)})\le x_n \mid \xi_0>x_n \bigr)
	= \theta ,
\end{equation}
at least if $\liminf_{n\to \infty} \abs{D_n} \PP(\xi_0>x_n)>0$. The following lemma, which follows simply by rearranging the probabilities, gives this and another relation between the conditions which are used throughout the paper.

\begin{lemma}\label{lem:conditionrelation}
Assume that $\liminf_{n\to \infty} \abs{D_n} \PP(\xi_0>x_n)>0$. Then the relations below hold:
\begin{enumerate}[label=\normalfont(\roman*)]
	\item \label{eq:conditionrelation1}
		If $\mathcal D^{(m)}(x_n;k_n)$ is satisfied for some $m\in\NN_0$, then $\mathcal{D}^\ell(x_n;k_n;\theta)$ is equivalent to \eqref{eq:thetacondition2} for this $m$.
	\item \label{eq:conditionrelation2}
		Assume that $\limsup_{n\to \infty} \abs{D_n} \PP(\xi_0>x_n)<\infty$. If $\mathcal D^{\ell}(x_n;k_n; 1)$ is satisfied, then $\mathcal D^{(m)}(x_n;k_n)$ and \eqref{eq:thetacondition2} hold for $\theta=1$ and all $m\in\NN_0$. In particular, $\mathcal D^{\ell}(x_n;k_n; 1)$ is satisfied 
		if and only if $\mathcal D^{(m)}(x_n;k_n)$ and \eqref{eq:thetacondition2} hold for $\theta=1$ and some $m\in\NN_0$, which is satisfied
		if and only if $\mathcal D^{(m)}(x_n;k_n)$ and \eqref{eq:thetacondition2} hold for $\theta=1$ and all $m\in\NN_0$.
\end{enumerate}
\end{lemma}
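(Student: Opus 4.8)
The plan is to reduce everything to one exact identity relating the three conditional quantities and then play the $\liminf$ and $\limsup$ bounds on $\abs{D_n}\PP(\xi_0>x_n)$ against each other. First I would record the inclusion $A_0^{(m)}\subseteq A_0^{n,k_n}$, valid for all large $n$ because each side-length $t_{n,k_n}^\ell\to\infty$ while $m$ is fixed. Consequently $\{M_\xi(A_0^{n,k_n})\le x_n\}\subseteq\{M_\xi(A_0^{(m)})\le x_n\}$, so subtracting these events after intersecting with $\{\xi_0>x_n\}$ leaves exactly the event appearing in $\mathcal D^{(m)}(x_n;k_n)$. Dividing by $\PP(\xi_0>x_n)$ gives
\begin{align*}
	\MoveEqLeft \PP\bigl(M_\xi(A_0^{(m)})\le x_n\mid \xi_0>x_n\bigr) - \PP\bigl(M_\xi(A_0^{n,k_n})\le x_n\mid \xi_0>x_n\bigr) \\ &= \frac{\abs{D_n}\PP\bigl(M_\xi(A_0^{(m)})\le x_n<\xi_0\,,\,M_\xi(A_0^{n,k_n}\setminus A_0^{(m)})>x_n\bigr)}{\abs{D_n}\PP(\xi_0>x_n)} .
\end{align*}

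For the first part, under $\mathcal D^{(m)}(x_n;k_n)$ the numerator tends to $0$, while by the standing assumption $\liminf_n\abs{D_n}\PP(\xi_0>x_n)>0$ the denominator is eventually bounded away from $0$; hence the displayed difference vanishes. Thus $\PP(M_\xi(A_0^{(m)})\le x_n\mid\xi_0>x_n)$ and $\PP(M_\xi(A_0^{n,k_n})\le x_n\mid\xi_0>x_n)$ share the same limiting behaviour, so \eqref{eq:thetacondition2} and $\mathcal D^\ell(x_n;k_n;\theta)$ are equivalent for this $m$.

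For the second part, I would first deduce $\mathcal D^{(m)}(x_n;k_n)$ for every $m$ from $\mathcal D^\ell(x_n;k_n;1)$. Since the event in \eqref{eq:anti-clustering0} is contained in $\{\xi_0>x_n\,,\,M_\xi(A_0^{n,k_n})>x_n\}$ (again using $A_0^{(m)}\subseteq A_0^{n,k_n}$), one has
\[
	\abs{D_n}\PP\bigl(M_\xi(A_0^{(m)})\le x_n<\xi_0\,,\,M_\xi(A_0^{n,k_n}\setminus A_0^{(m)})>x_n\bigr) \le \abs{D_n}\PP(\xi_0>x_n)\,\PP\bigl(M_\xi(A_0^{n,k_n})>x_n\mid\xi_0>x_n\bigr) .
\]
Here $\abs{D_n}\PP(\xi_0>x_n)$ is bounded above by the extra assumption $\limsup_n\abs{D_n}\PP(\xi_0>x_n)<\infty$, while the conditional factor tends to $0$ because $\mathcal D^\ell(x_n;k_n;1)$ forces $\PP(M_\xi(A_0^{n,k_n})>x_n\mid\xi_0>x_n)\to 0$; hence $\mathcal D^{(m)}(x_n;k_n)$ holds for all $m$. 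Combining this with the first part applied at $\theta=1$ yields \eqref{eq:thetacondition2} with $\theta=1$ for all $m$, proving the first assertion of the second part. The equivalence chain then closes as a loop: the implication just established gives ``$\mathcal D^\ell(x_n;k_n;1)\Rightarrow$ all $m$'', the step ``all $m\Rightarrow$ some $m$'' is trivial, and ``some $m\Rightarrow\mathcal D^\ell(x_n;k_n;1)$'' is exactly the first part at $\theta=1$.

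The computations are elementary, so the only delicate point is bookkeeping: one must keep the factor $\abs{D_n}$ attached so that the numerator is genuinely the $\mathcal D^{(m)}$ quantity, and one must invoke the $\liminf$ bound for the first part but the $\limsup$ bound for the second. It is precisely this interplay of the two one-sided bounds on $\abs{D_n}\PP(\xi_0>x_n)$ that drives both directions, and I expect verifying that each bound is used where it is genuinely needed to be the main thing to be careful about.
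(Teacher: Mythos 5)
Your proof is correct and is exactly the ``rearranging the probabilities'' argument the paper alludes to (it gives no written proof beyond that remark): the set difference identity $\PP(M_\xi(A_0^{(m)})\le x_n<\xi_0)-\PP(M_\xi(A_0^{n,k_n})\le x_n<\xi_0)=\PP(M_\xi(A_0^{(m)})\le x_n<\xi_0,\,M_\xi(A_0^{n,k_n}\setminus A_0^{(m)})>x_n)$, combined with the $\liminf$ bound for part (i) and the $\limsup$ bound for part (ii). Nothing further is needed.
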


The corollary to Theorem~\ref{thm:maximumtheorem} given below does in fact not follow directly by an application of Lemma~\ref{lem:conditionrelation}, however, it follows by almost identical arguments as Theorem~\ref{thm:maximumtheorem}.

\begin{corollary}
Let $(D_n)_{n\in\NN}$ be a sequence of sets satisfying Assumption~\ref{ass:Cnassumption}, and let $(\xi_v)_\vZ$ be a stationary field satisfying $\mathcal{D}(x_n;K_n)$ for some sequence $(x_n)_{n\in\NN}$. Moreover, let $k_n$ be a sequence of integers satisfying $k_n\to\infty$, $k_n \alpha_n \to 0$ and $k_n^{1/d} \bb{\gamma_n} = o(\bb{c_n})$ as $n\to\infty$, such that $\mathcal{D}^{(m)}(x_n;k_n)$ is satisfied for some $m\in\NN_0$. Assume furthermore that \eqref{eq:thetacondition2} is satisfied for such $m$ and for some $\theta\in (0,1]$.
Then, for all $ 0 \le \tau < \infty$,
\[
	\abs{D_n} \PP(\xi_0>x_n) \to \tau
\]
if and only if
\[
	\PP\bigl( \max_{v\in D_n} \xi_v \le x_n\bigr) \to \exp(-\theta \tau)
\]
as $n\to\infty$.
\end{corollary}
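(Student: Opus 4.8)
The plan is to follow the proof of Theorem~\ref{thm:maximumtheorem} essentially line for line, the single modification being the way the quantity $\abs{D_n}\PP(M_\xi(A_0^{n,k_n})\le x_n<\xi_0)$ is tied to the tail $\abs{D_n}\PP(\xi_0>x_n)$. In Theorem~\ref{thm:maximumtheorem} this link is delivered directly by $\mathcal{D}^\ell$ via \eqref{eq:thetacondition}. Here we are given only $\mathcal{D}^{(m)}(x_n;k_n)$ and \eqref{eq:thetacondition2}, and the tempting shortcut of first using Lemma~\ref{lem:conditionrelation}\ref{eq:conditionrelation1} to upgrade these to $\mathcal{D}^\ell$ is unavailable: that lemma presupposes $\liminf_n\abs{D_n}\PP(\xi_0>x_n)>0$, a hypothesis that fails when $\tau=0$ and that is, in the reverse implication, exactly the tail behaviour we still have to determine. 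The crux is therefore to reproduce the link in a form requiring no prior control of the tail.

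First I would record an unconditional identity. For $n$ large enough that $A_0^{(m)}\subseteq A_0^{n,k_n}$, splitting $\{M_\xi(A_0^{n,k_n})\le x_n<\xi_0\}$ according to whether $M_\xi(A_0^{n,k_n}\setminus A_0^{(m)})$ exceeds $x_n$ gives
\[
	\PP(M_\xi(A_0^{n,k_n})\le x_n<\xi_0)
	= \PP(M_\xi(A_0^{(m)})\le x_n<\xi_0)
	- \PP\bigl(M_\xi(A_0^{(m)})\le x_n<\xi_0,\ M_\xi(A_0^{n,k_n}\setminus A_0^{(m)})>x_n\bigr).
\]
Multiplying by $\abs{D_n}$, the last term tends to $0$ by $\mathcal{D}^{(m)}(x_n;k_n)$, that is by \eqref{eq:anti-clustering0}, while the first equals $\abs{D_n}\PP(\xi_0>x_n)\,\beta_n$ with $\beta_n=\PP(M_\xi(A_0^{(m)})\le x_n\mid\xi_0>x_n)\to\theta$ by \eqref{eq:thetacondition2}. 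Hence
\[
	\abs{D_n}\PP(M_\xi(A_0^{n,k_n})\le x_n<\xi_0)
	= \beta_n\,\abs{D_n}\PP(\xi_0>x_n) + o(1),
	\qquad \beta_n\to\theta ,
\]
an identity that holds with no tail assumption and plays precisely the role that \eqref{eq:thetacondition} plays inside the proof of Theorem~\ref{thm:maximumtheorem}.

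With this in hand both implications transcribe directly. For the forward direction, $\abs{D_n}\PP(\xi_0>x_n)\to\tau$ yields \eqref{eq:tailcondition0}, hence \eqref{eq:knequality}, and the identity forces $\abs{D_n}\PP(M_\xi(A_0^{n,k_n})\le x_n<\xi_0)\to\theta\tau$, so that $\PP(M_\xi(D_n)\le x_n)\to\exp(-\theta\tau)$, the case $\tau=0$ included. For the reverse direction, combining the assumed convergence $\PP(M_\xi(D_n)\le x_n)\to\exp(-\theta\tau)$ with \eqref{eq:knineq1} bounds $\limsup_n\abs{D_n}\PP(M_\xi(A_0^{n,k_n})\le x_n<\xi_0)$ by $\theta\tau<\infty$; since $\beta_n\to\theta>0$, the identity then forces $\limsup_n\abs{D_n}\PP(\xi_0>x_n)<\infty$, i.e.\ \eqref{eq:tailcondition0}, after which \eqref{eq:knequality} gives $\abs{D_n}\PP(M_\xi(A_0^{n,k_n})\le x_n<\xi_0)\to\theta\tau$ and dividing through by $\beta_n$ returns $\abs{D_n}\PP(\xi_0>x_n)\to\tau$. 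The one delicate point, and the reason this merits a separate corollary rather than a one-line appeal to Lemma~\ref{lem:conditionrelation}, is establishing the displayed identity uniformly in the tail so that it remains valid even when $\tau=0$ or when the tail is not yet known to be bounded below; once that is isolated, the remainder is a faithful copy of the argument for Theorem~\ref{thm:maximumtheorem}.
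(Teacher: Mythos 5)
Your proof is correct and follows exactly the route the paper intends: the paper merely remarks that the corollary ``follows by almost identical arguments as Theorem~\ref{thm:maximumtheorem},'' and your unconditional identity $\abs{D_n}\PP(M_\xi(A_0^{n,k_n})\le x_n<\xi_0)=\beta_n\abs{D_n}\PP(\xi_0>x_n)+o(1)$, obtained from $\mathcal{D}^{(m)}(x_n;k_n)$ and \eqref{eq:thetacondition2}, is precisely the substitute for \eqref{eq:thetacondition} that makes that transcription work in both directions without presupposing a lower bound on the tail.
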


\begin{remark}
Note that a field satisfying the conditions of the Corollary with $m=0$, but with $\mathcal D^{(m)}(x_n;k_n)$ replaced by the stronger requirement \eqref{eq:anti-clustering}, necessarily satisfies \eqref{eq:thetacondition2} with $\theta=1$. Thus, in this case, the result simplifies to Theorem~5 of \cite{StehrRonnNielsen2020}.
\end{remark}

\setcounter{example}{2}
\begin{example}[Continued]
Recall the setup of Example~\ref{ex:clusterexample}. We concluded that one implication of the corollary was true with $\theta=1/\abs{B}$. As we will see now, under the initial assumption that $(x_n)$ is chosen such that $\limsup_{n\to\infty}\abs{D_n}\PP(\xi_0>x_n)<\infty$, the reverse implication is in fact also true since the anti-clustering condition $\mathcal D^{(m)}(x_n;k_n)$ is satisfied for sufficiently large $m$, and since \eqref{eq:thetacondition2} is satisfied with $\theta=1/\abs{B}$: As already mentioned, $(\xi_v)_\vZ$ is $m$-dependent for $m$ sufficiently large.
Therefore, $D^{(m)}(x_n;k_n)$ is satisfied for any sequence $(k_n)$ with the proper growth rates; see the comment prior to condition $\mathcal D^{(m)}(x_n;k_n)$. Now fix such an $m$. Then,
\begin{align*}
	\MoveEqLeft[1]
	\PP \bigl( M_\xi(A_0^{(m)}) \le x_n < \xi_0 \bigr) \\ &
	= \PP \bigl( M_\xi(A_0^{(m)}) \le x_n \bigr)
	- \PP \bigl( M_\xi(A_0^{(m)}) \le x_n, \xi_0\le x_n \bigr) \\ &
	= \PP \bigl( Y_z \le x_n \text{ for all } z \in A_0^{(m)} \oplus B \bigr)
	- \PP \bigl( Y_z \le x_n \text{ for all } z \in (A_0^{(m)}\cup \{0\}) \oplus B \bigr) .
\end{align*}
Let the elements of $B$ be ordered as follows (relative to the underlying translation invariant order $\preceq$):
\[
	b_1 \prec b_2 \prec \cdots \prec b_{\abs{B}} .
\]
Then, utilizing the fact that $b_j-b_i\in A_0^{(m)}$ for all $i<j$ due to the choice of $m$, we see that
\[
	(A_0^{(m)}\cup \{0\}) \oplus B = (A_0^{(m)} \oplus B) \cup \{b_1\} ,
\]
with $b_1 \not\in A_0^{(m)} \oplus B$.
Consequently, and using the i.i.d. structure of the random variables $(Y_z)_{z\in\Zd}$, the probability above simplifies to
\[
	\PP \bigl( M_\xi(A_0^{(m)}) \le x_n < \xi_0 \bigr)
	= \PP(Y_0 \le x_n)^{\abs{A_0^{(m)} \oplus B}} \,
	\PP(Y_0>x_n) .
\]
Since $A_0^{(m)}\oplus B$ is a finite set independent of $n$, the first factor of the product converges to $1$ as $n\to\infty$. Realizing secondly that $\PP(Y_0>x_n)$ is asymptotically equivalent to $\PP(\xi_0>x_n)/\abs{B}$ shows that \eqref{eq:thetacondition2} is satisfied with $\theta=1/\abs{B}$.
\end{example}

The results below, which follow directly from results and considerations above, relates the existence of an extremal index to the local condition $\mathcal{D}^\ell$: In accordance with the definition given in e.g. \cite{Leadbetter1983}, we say that the stationary field $(\xi_v)_\vZ$ has extremal index $\theta \in [0,1]$ (with respect to $(D_n)$) if for each $\tau >0$ there exists a sequence $x_n(\tau)$ such that
\[
	\abs{D_n} \PP(\xi_0 > x_n(\tau)) \to \tau
\]
and
\[
	\PP \bigl( M_\xi(D_n) \le x_n(\tau) \bigr) \to \exp(-\theta \tau) .
\]
as $n\to\infty$.

\begin{theorem}\label{thm:extremalindextheorem}
Let $(D_n)_{n\in\NN}$ be a sequence of sets satisfying Assumption~\ref{ass:Cnassumption}, and let $(\xi_v)_\vZ$ be a stationary field satisfying $\mathcal{D}(x_n;K_n)$ where for all $0 < \tau<\infty$ the sequence $x_n=x_n(\tau)$ is chosen such that
$\abs{D_n}\PP(\xi_0>x_n(\tau)) \to\tau$. Let $\alpha_n=\alpha_n(\tau)$ and $\gamma_n=\gamma_n(\tau)$ be the mixing constants from the condition. Then the field has extremal index $\theta \in [0,1]$ with respect to $(D_n)$ if and only if there exist a sequence $k_n=k_n(\tau)$ satisfying $k_n\to\infty$, $k_n \alpha_n \to 0$ and $k_n^{1/d} \bb{\gamma_n} = o(\bb{c_n})$ as $n\to\infty$, such that $\mathcal{D}^\ell(x_n(\tau);k_n(\tau);\theta)$ is satisfied for all $\tau>0$.
\end{theorem}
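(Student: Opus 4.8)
The plan is to obtain both implications directly from the exponential representation \eqref{eq:knequality} of Lemma~\ref{lem:maxrelation}, rather than through Theorem~\ref{thm:maximumtheorem}, precisely so that the boundary values $\theta=0$ and $\theta=1$ are covered without extra work. The pivotal identity I would use is the factorization
\[
	\abs{D_n}\,\PP\bigl(M_\xi(A_0^{n,k_n})\le x_n<\xi_0\bigr)
	=
	\PP\bigl(M_\xi(A_0^{n,k_n})\le x_n\mid \xi_0>x_n\bigr)\,\abs{D_n}\,\PP(\xi_0>x_n),
\]
whose second factor tends to $\tau$ by the standing assumption. Since $\abs{D_n}\PP(\xi_0>x_n(\tau))\to\tau<\infty$ for every $\tau$, the tail condition \eqref{eq:tailcondition0} always holds, so for any $k_n$ obeying the growth constraints $k_n\to\infty$, $k_n\alpha_n\to0$, $k_n^{1/d}\bb{\gamma_n}=o(\bb{c_n})$, Lemma~\ref{lem:maxrelation} yields the master equation
\[
	\PP\bigl(M_\xi(D_n)\le x_n\bigr)
	=
	\exp\bigl(-\abs{D_n}\PP(M_\xi(A_0^{n,k_n})\le x_n<\xi_0)\bigr)+o(1).
\]

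For the ``if'' direction I would fix $\tau>0$ together with the supplied sequence $k_n(\tau)$ for which $\mathcal D^\ell(x_n(\tau);k_n(\tau);\theta)$ holds. Then the first factor in the factorization tends to $\theta$ by \eqref{eq:thetacondition}, so the exponent in the master equation converges to $\theta\tau$ and we read off $\PP(M_\xi(D_n)\le x_n(\tau))\to\exp(-\theta\tau)$. Combined with the tail convergence that is built into the hypotheses, this is exactly the defining property of the extremal index $\theta$, and it holds for every $\tau>0$.

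For the ``only if'' direction I would first construct a single admissible $k_n(\tau)$ for each $\tau$: since $\mathcal D(x_n(\tau);K_n)$ supplies $\alpha_n(\tau)\to0$ and $\bb{\gamma_n(\tau)}=o(\bb{c_n})$, a slow-growth (diagonal) choice of $k_n(\tau)\to\infty$ growing more slowly than both $1/\alpha_n(\tau)$ and $\min_\ell (c_{n,\ell}/\gamma_{n,\ell}(\tau))^d$ satisfies all three growth constraints simultaneously. With such $k_n(\tau)$ the master equation is valid, and the extremal-index hypothesis $\PP(M_\xi(D_n)\le x_n(\tau))\to\exp(-\theta\tau)$ forces $\abs{D_n}\PP(M_\xi(A_0^{n,k_n})\le x_n<\xi_0)\to\theta\tau$ (by applying $-\log$ when $\theta\tau>0$, and noting that the nonnegative exponent must vanish when $\theta=0$ because $\exp(-\,\cdot\,)\to1$). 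Dividing by the second factor of the factorization, which tends to $\tau>0$, then gives $\PP(M_\xi(A_0^{n,k_n})\le x_n\mid \xi_0>x_n)\to\theta$, i.e.\ $\mathcal D^\ell(x_n(\tau);k_n(\tau);\theta)$.

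I expect no substantial obstacle here, as the result is essentially a repackaging of Lemma~\ref{lem:maxrelation}. The only genuinely delicate points are that the boundary case $\theta=0$ falls outside the scope of Theorem~\ref{thm:maximumtheorem} and so must be routed through the master equation as above, and the simultaneous verification of the three growth constraints on $k_n(\tau)$ in the ``only if'' direction, where care is needed because the mixing constants $\alpha_n(\tau)$ and $\bb{\gamma_n(\tau)}$ depend on $\tau$; both reduce to elementary bookkeeping.
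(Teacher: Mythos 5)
Your proposal is correct and takes essentially the same route as the paper, whose proof consists of the single remark that the claim ``follows easily from Lemma~\ref{lem:maxrelation} equation \eqref{eq:knequality}''; you simply spell out the factorization of $\abs{D_n}\PP(M_\xi(A_0^{n,k_n})\le x_n<\xi_0)$ and the bookkeeping for an admissible $k_n(\tau)$. Your explicit handling of the boundary case $\theta=0$ (which falls outside Theorem~\ref{thm:maximumtheorem}) is a worthwhile detail that the paper leaves implicit.
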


\begin{proof}
The claim follows easily from Lemma~\ref{lem:maxrelation} equation \eqref{eq:knequality}.
\end{proof}

\begin{corollary}
Let the assumptions of Theorem~\ref{thm:extremalindextheorem} be satisfied. Assume furthermore that there exist a sequence $k_n=k_n(\tau)$ satisfying $k_n\to\infty$, $k_n \alpha_n \to 0$ and $k_n^{1/d} \bb{\gamma_n} = o(\bb{c_n})$ as $n\to\infty$, such that the anti-clustering condition $\mathcal{D}^{(m)}(x_n(\tau);k_n(\tau))$ holds for some $m\in\NN_0$ and all $\tau>0$. Then the field has extremal index $\theta \in [0,1]$ with respect to $(D_n)$ if and only if \begin{equation*}
	\lim_{n\to\infty }\PP \bigl( M_\xi(A_0^{(m)})\le x_n(\tau) \mid \xi_0>x_n(\tau) \bigr)
	= \theta
\end{equation*}
for all $\tau>0$.
\end{corollary}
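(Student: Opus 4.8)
The plan is to chain two equivalences, both stated for the levels $x_n(\tau)$ prescribed by the hypothesis and for each $\tau>0$. Write $k_n=k_n(\tau)$ for the sequence supplied by the extra assumption, for which $\mathcal{D}^{(m)}(x_n(\tau);k_n(\tau))$ holds and which meets the growth requirements $k_n\to\infty$, $k_n\alpha_n\to0$ and $k_n^{1/d}\bb{\gamma_n}=o(\bb{c_n})$. Since $\abs{D_n}\PP(\xi_0>x_n(\tau))\to\tau\in(0,\infty)$, the tail bound \eqref{eq:tailcondition0} is in force and $\liminf_n\abs{D_n}\PP(\xi_0>x_n(\tau))=\tau>0$, so every tool of this section is available with this $k_n$.

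First I would prove, for each $\tau>0$, the equivalence
\[
	\PP\bigl(M_\xi(D_n)\le x_n(\tau)\bigr)\to\exp(-\theta\tau)
	\quad\Longleftrightarrow\quad \eqref{eq:thetacondition2}.
\]
Applying \eqref{eq:knequality} with this $k_n$ and factoring the exponent as
\[
	\abs{D_n}\PP\bigl(M_\xi(A_0^{n,k_n})\le x_n(\tau)<\xi_0\bigr)
	=\abs{D_n}\PP(\xi_0>x_n(\tau))\cdot\PP\bigl(M_\xi(A_0^{n,k_n})\le x_n(\tau)\mid\xi_0>x_n(\tau)\bigr),
\]
the first factor tends to $\tau>0$, so by continuity of $\exp$ on $[0,\infty)$ the left-hand convergence is equivalent to the conditional probability tending to $\theta$, that is, to $\mathcal{D}^\ell(x_n(\tau);k_n;\theta)$. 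By Lemma~\ref{lem:conditionrelation}\ref{eq:conditionrelation1}, invoked with this same $k_n$ (for which $\mathcal{D}^{(m)}$ holds), the latter is in turn equivalent to \eqref{eq:thetacondition2}.

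Next I would show that the field has extremal index $\theta$ if and only if $\PP(M_\xi(D_n)\le x_n(\tau))\to\exp(-\theta\tau)$ for every $\tau>0$. The direction $(\Leftarrow)$ is immediate, since the prescribed levels already satisfy $\abs{D_n}\PP(\xi_0>x_n(\tau))\to\tau$ and therefore serve directly as the witnessing sequence in the definition of the extremal index. For $(\Rightarrow)$, Theorem~\ref{thm:extremalindextheorem} produces, from the existence of the extremal index $\theta$, some admissible sequence $\tilde k_n=\tilde k_n(\tau)$ (with the required growth rates) such that $\mathcal{D}^\ell(x_n(\tau);\tilde k_n;\theta)$ holds for all $\tau$; substituting this into \eqref{eq:knequality} with $k_n$ replaced by $\tilde k_n$ and repeating the factorisation above yields $\PP(M_\xi(D_n)\le x_n(\tau))\to\exp(-\theta\tau)$. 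Chaining the two equivalences then proves the corollary.

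The single point requiring care --- and the reason this is not a one-line consequence of Lemma~\ref{lem:conditionrelation} --- is that the admissible sequence $\tilde k_n$ delivered by Theorem~\ref{thm:extremalindextheorem} need not coincide with the sequence $k_n$ for which the anti-clustering condition is assumed, whereas Lemma~\ref{lem:conditionrelation} compares $\mathcal{D}^\ell$ with \eqref{eq:thetacondition2} only through one and the same index sequence. This is precisely what the first equivalence circumvents: \eqref{eq:knequality} identifies $\mathcal{D}^\ell(x_n(\tau);k_n;\theta)$, for every admissible $k_n$, with the single intrinsic convergence $\PP(M_\xi(D_n)\le x_n(\tau))\to\exp(-\theta\tau)$, which refers to no index sequence at all. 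Hence $\mathcal{D}^\ell$ for $\tilde k_n$ forces $\mathcal{D}^\ell$ for the assumed $k_n$, after which Lemma~\ref{lem:conditionrelation} may be applied with $k_n$ to reach the $k_n$-free condition \eqref{eq:thetacondition2}; this removes the apparent dependence on the particular admissible sequence and closes the argument for all $\theta\in[0,1]$.
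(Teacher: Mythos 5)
Your proof is correct and follows essentially the same route as the paper, whose own proof simply reads ``Applying Theorem~\ref{thm:extremalindextheorem} and Lemma~\ref{lem:conditionrelation}\ref{eq:conditionrelation1} gives the result.'' The point you isolate --- that the admissible sequence produced by Theorem~\ref{thm:extremalindextheorem} need not coincide with the one for which $\mathcal{D}^{(m)}$ is assumed, so one must pass through the $k_n$-free statement $\PP(M_\xi(D_n)\le x_n(\tau))\to\exp(-\theta\tau)$ via \eqref{eq:knequality} before invoking Lemma~\ref{lem:conditionrelation}\ref{eq:conditionrelation1} --- is exactly the detail the paper's one-line proof leaves implicit, and you fill it correctly.
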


\begin{proof}
Applying Theorem~\ref{thm:extremalindextheorem} and Lemma~\ref{lem:conditionrelation}\ref{eq:conditionrelation1} gives the result.
\end{proof}

\setcounter{example}{2}
\begin{example}[Continued]
The field $(\xi_v)_\vZ$ with $\xi_v=\max_{z\in v+B} Y_z$ introduced in Example~\ref{ex:clusterexample} has extremal index $1/\abs{B}$ with respect to $(D_n)$. 
\end{example}

%----------------------------
 	\section{Cluster counting processes}
	\label{sec:clusterpointmeasures}
%----------------------------	

In the remainder of the paper we focus on sets $D_n$ of the specific structure defined below. Moreover, we only consider choices of the corresponding sequence $x_n$ such that $\abs{D_n}\PP(\xi_0>x_n)$ converges to a finite, non-zero limit. In particular, this means that the local condition $\mathcal{D}^\ell(x_n;k_n;\theta)$ is implied by the anti-clustering condition $\mathcal{D}^{(m)}(x_n;k_n)$ in combination with \eqref{eq:thetacondition2} for some $\theta\in[0,1]$; see Lemma~\ref{lem:conditionrelation}\ref{eq:conditionrelation1}. Though only formulated under the assumption of $\mathcal D^\ell$, all results from Section~\ref{sec:clusterpointmeasures} therefore remain true under the combined assumption of $\mathcal D^{(m)}$ and \eqref{eq:thetacondition2}.
	
Now assume for some $p\in\NN$ that $C$ is a $p$-convex set. Then define the sequence $(C_n)_{n\in\NN}$ as
\[
	C_n= \sca  C,
\]
where $(\sca)_{n\in\NN}$ is a sequence of $d$-dimensional vectors $\sca = (\scal{1},\dots, \scal{d})$ such that $0 < \scal{\ell} \to\infty$, and where we without loss of generality will assume that $\abs{C}=1$. 
We now construct the index sets
\[
	D_n=C_n\cap\Zd
\] 
for all $n\in\NN$, and note in particular that $(D_n)_{n\in\NN}$ satisfies Assumption~\ref{ass:Cnassumption}
with $(\sca)$ playing the role of the scaling vector  from the assumption. Furthermore, $(x_n)$ and $(k_n)$ will be sequences, such that $(\xi_v)$, $(D_n)$, $(x_n)$ and $(k_n)$ jointly satisfy conditions specified in the relevant theorems.

As mentioned in the introduction we introduce two definitions of cluster counting processes, $N_n$ and $\tilde{N}_n$, respectively, counting the number of clusters (of the relevant type) of indices, where $\xi_v$ is above the level $x_n$ within $D_n$. First we define the measure $N_n$, based on the grid formed by the $J_z^{n,k_n}$-sets, and show Theorem~\ref{thm:pointprocesstheorem}. Afterwards, we introduce the more intuitive cluster measure $\tilde{N}_n$ and show a similar convergence theorem for that.

In the definition of the measure $N_n$, we count the set $J_z^{n,k_n}$ as a cluster, if $M_\xi(J_z^{n,k_n})>x_n$. For each $n$ we define the random measure $N_n$ on $C$ by counting the number of clusters in a scaled version of the index set as follows
\[
	N_n(A)=\sum_{z\in\Zd}\I{A}\Big(z\frac{\bb{t_{n,k_n}}}{\sca}\Big)
	\I{\{M_\xi(J_z^{n,k_n})>x_n\}}
\]
for all $A\in \Bb(C)$. Note that the cluster is counted as placed in the set $A$, if the down-scaled corner-point $z \,\bb{t_{n,k_n}}/\sca$ of the set $J_z^{n,k_n}$ is in $A$. We will later refer to such points, each representing a cluster, as \emph{cluster points}. It should be emphasized that $N_n$ depends crucially on the choice of the $k_n$-sequence.

In Theorem~\ref{thm:pointprocesstheorem} below we show that the sequence of random measures $(N_n)_{n\in\NN}$ \emph{converges in distribution with respect to the vague topology} towards a homogeneous Poisson measure $N$, denoted $N_n\stackrel{vd}{\to}N$. Details on this type of convergence can be found in \cite[Chapter~4]{Kallenberg2017}, and it is defined as
\[
\int_C f \dd N_n\to \int_C f \dd N\qquad\text{for all }f\in \hat{C}_C,
\]
where $\hat{C}_C$ denotes all bounded, continuous functions $f:C\to\RR_+$ with compact support. In the present case, where the limiting measure is a homogeneous Poisson measure, the convergence can equivalently be formulated as
\[
(N_n(A_1),\ldots,N_n(A_K))\stackrel{\mathcal{D}}{\to} (N(A_1),\ldots,N(A_K))
\]
as $n\to\infty$ for all $K\in\NN$ and all $A_1,\ldots,A_K\in \Bb(C)$ with $\abs{\partial A_k}=0$ for $k=1,\ldots,K$. See \cite[Theorem~4.11]{Kallenberg2017}.

\begin{theorem}\label{thm:pointprocesstheorem}
Let $(D_n)_{n\in\NN}$ be defined as above, and let $(\xi_v)_\vZ$ be a stationary field satisfying $\mathcal{D}(x_n;K_n)$ for some sequence $(x_n)_{n\in\NN}$. Moreover, let $k_n$ be a sequence of integers satisfying $k_n\to\infty$, $k_n \alpha_n \to 0$ and $k_n^{1/d} \bb{\gamma_n} = o(\sca)$ as $n\to\infty$, such that $\mathcal{D}^\ell(x_n;k_n; \theta)$ is satisfied for some $\theta\in (0,1]$. If for some $ 0 < \tau < \infty$,
\[
	\abs{D_n} \PP(\xi_0>x_n) \to \tau
\]
then $N_n\stackrel{vd}{\to} N$, where $N$ is a homogeneous Poisson measure on $C$ with intensity $\theta\tau$.
\end{theorem}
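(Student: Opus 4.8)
The plan is to establish $N_n\stackrel{vd}{\to}N$ by verifying the two classical conditions for Poisson convergence of a simple point process. By the finite-dimensional reformulation of vague convergence recorded before the theorem together with \cite[Theorem~4.11]{Kallenberg2017}, it suffices to prove, for every $A\in\Bb(C)$ with $\abs{\partial A}=0$ (including finite disjoint unions of such sets), that
\[
	\EE N_n(A)\to\theta\tau\abs{A}
	\qquad\text{and}\qquad
	\PP(N_n(A)=0)\to\exp(-\theta\tau\abs{A}).
\]
Since $N_n$ places mass at most one at each of the distinct cluster points $z\,\bb{t_{n,k_n}}/\sca$ it is simple, and the intensity $A\mapsto\theta\tau\abs{A}$ of $N$ is diffuse, so these two limits identify the limit as the claimed homogeneous Poisson measure.

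A preliminary observation fixes the order of the one-box cluster probability. Since $\abs{D_n}\PP(\xi_0>x_n)\to\tau$ forces the tail condition \eqref{eq:tailcondition0}, the representation \eqref{eq:limitexpequality} of Lemma~\ref{lem:maxrelation} applies, and combined with Theorem~\ref{thm:maximumtheorem}, which gives $\PP(M_\xi(D_n)\le x_n)\to e^{-\theta\tau}$, it yields $k_n\PP(M_\xi(J_0^{n,k_n})>x_n)\to\theta\tau$; Lemma~\ref{lem:Happrox} then gives the same limit with $H_0^{n,k_n}$ in place of $J_0^{n,k_n}$. For the intensity, stationarity turns $\EE N_n(A)$ into the number of indices $z$ with $z\,\bb{t_{n,k_n}}/\sca\in A$, times $\PP(M_\xi(J_0^{n,k_n})>x_n)$. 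The cluster points form a grid with cell volume $\prod_{\ell=1}^d t_{n,k_n}^\ell/c_{n,\ell}\sim 1/k_n\to 0$, so for $\abs{\partial A}=0$ the count is asymptotic to $k_n\abs{A}$; multiplying by $\PP(M_\xi(J_0^{n,k_n})>x_n)\sim\theta\tau/k_n$ gives $\EE N_n(A)\to\theta\tau\abs{A}$.

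The main obstacle is the void probability $\PP(N_n(A)=0)=\PP\bigl(\bigcap_z\{M_\xi(J_z^{n,k_n})\le x_n\}\bigr)$, the intersection being over the $m_n\sim k_n\abs{A}$ indices with $z\,\bb{t_{n,k_n}}/\sca\in A$; the difficulty is that the tiling boxes $J_z^{n,k_n}$ are adjacent and hence not separated, so the mixing condition $\mathcal{D}(x_n;K_n)$ cannot be applied to them directly. I would circumvent this exactly as in Lemma~\ref{lem:Happrox}: replace each $J_z^{n,k_n}$ by the shrunken box $H_z^{n,k_n}$, which are $\bb{\gamma_n}$-separated for varying $z$, at a cost bounded by $m_n\abs{H_0^*}\PP(\xi_0>x_n)$ with $H_0^*=J_0^{n,k_n}\setminus H_0^{n,k_n}$; since this is asymptotic to $\abs{A}\bigl(\sum_{\ell=1}^d k_n^{1/d}\gamma_{n,\ell}/c_{n,\ell}\bigr)\abs{D_n}\PP(\xi_0>x_n)$, the hypotheses $k_n^{1/d}\bb{\gamma_n}=o(\sca)$ and $\abs{D_n}\PP(\xi_0>x_n)\to\tau$ make it vanish. (Each such box indeed lies in $K_n$: its corner $z\,\bb{t_{n,k_n}}$ lies in $\sca A\subseteq C_n$, so $z\in Q_{n,k_n}$ and $J_z^{n,k_n}\subseteq\dnp\subseteq K_n$.) I would then factorize $\PP\bigl(\bigcap_z\{M_\xi(H_z^{n,k_n})\le x_n\}\bigr)$ into $\prod_z\PP(M_\xi(H_z^{n,k_n})\le x_n)$ by applying the mixing inequality \eqref{eq:mixingcond} inductively, peeling off one box against the separated union of the remaining ones, for a total error at most $(m_n-1)\alpha_n\to 0$ (as $m_n=O(k_n)$ and $k_n\alpha_n\to0$). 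Finally stationarity collapses the product to $\PP(M_\xi(H_0^{n,k_n})\le x_n)^{m_n}$, and since $k_n\PP(M_\xi(H_0^{n,k_n})>x_n)\to\theta\tau$ with $m_n\sim k_n\abs{A}$, the approximation \eqref{eq:Obrien} gives $\PP(M_\xi(H_0^{n,k_n})\le x_n)^{m_n}\to\exp(-\theta\tau\abs{A})$, completing the verification and the proof.
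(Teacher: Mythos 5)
Your proposal is correct, and the intensity computation together with the reduction to expectations and void probabilities is exactly the paper's argument. Where you diverge is in the void probability. The paper never re-opens the blocking machinery at this point: for a finite union of boxes $B$ it forms the inner and outer approximations $B_n^-\subseteq \sca B\subseteq B_n^+$ by whole cells $I_z^{n,k_n}$, checks that the sequences $(B_n^\pm\cap\Zd)$ themselves satisfy Assumption~\ref{ass:Cnassumption} with $\abs{B_n^\pm\cap\Zd}\,\PP(\xi_0>x_n)\to\abs{B}\tau$, invokes Theorem~\ref{thm:maximumtheorem} as a black box to get $\PP(M_\xi(B_n^\pm\cap\Zd)\le x_n)\to\exp(-\abs{B}\theta\tau)$, and sandwiches $\PP(N_n(B)=0)$ between these two quantities. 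You instead redo, on the sub-collection of the $m_n\sim k_n\abs{A}$ cells whose cluster points fall in $A$, the separation-and-factorization argument underlying Lemma~\ref{lem:Happrox}: shrink each $\Jz$ to $H_z^{n,k_n}$ at a cost $m_n\abs{H_0^*}\PP(\xi_0>x_n)\to 0$, peel off boxes one at a time via \eqref{eq:mixingcond} at a cost $(m_n-1)\alpha_n\to 0$, and finish with \eqref{eq:Obrien} and $k_n\PP(M_\xi(H_0^{n,k_n})>x_n)\to\theta\tau$. Both routes are sound, and your verification that each relevant $\Jz$ lies in $K_n$ (via $z\in Q_{n,k_n}$, so $\Jz\subseteq\dnp\subseteq K_n$) is the point that makes the mixing step legitimate. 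What the paper's route buys is brevity, since all the blocking work is already encapsulated in Theorem~\ref{thm:maximumtheorem}; what your route buys is that it avoids having to verify Assumption~\ref{ass:Cnassumption} for the approximating sequences and applies verbatim to any $A$ with $\abs{\partial A}=0$ rather than only to finite unions of half-open boxes.
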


\begin{proof}
According to \cite[Theorem~4.18]{Kallenberg2017} it suffices to show that $\EE N_n(A)\to \EE N(A)$, where $A\subseteq C$ is a box on the form $A=\bigtimes_{j=1}^d(a_j,b_j]$, and that $\PP(N_n(B)=0)\to \PP(N(B)=0)$ for all $B$, where $B\subseteq C$ is a finite union of boxes each on the form $\bigtimes_{j=1}^d(a_j,b_j]$.

First we find that
\[
	\EE N_n(A)=\PP(M_\xi(J_0^{n,k_n})>x_n)
	\sum_{z\in\Zd}\I{A}\Big(z\frac{\bb{t_{n,k_n}}}{\sca}\Big).
\]
Recalling that 
\[
	t_{n,k_n}^\ell=\lfloor  c_{n,\ell}/k_n^{1/d}\rfloor,
\] 
it is easily seen that
\[
	\sum_{z\in\Zd}\I{A}\Big(z\frac{\bb{t_{n,k_n}}}{\sca}\Big)
	\sim k_n \abs{A}
\]
as $n\to\infty$. Combining this with (\ref{eq:knequality}), (\ref{eq:limitexpequality}) and (\ref{eq:thetacondition}) gives
\begin{align*}
	\EE N_n(A)&
	\sim \abs{A} k_n\PP(M_\xi(J_0^{n,k_n})>x_n)\\&
	\sim \abs{A}\abs{D_n}\PP(\xi_0>x_n)\PP(M_\xi(A_0^{n,k_n})\leq x_n\mid \xi_0>x_n)\\&
	\sim \abs{A}\theta \tau \\&
	= \EE N(A) .
\end{align*}
Now let $B\subseteq C$ be a finite union of boxes. Define $B_n=\sca B$ and furthermore
\[
	B_n^-=\bigcup_{z:I_z\subseteq B_n}I_z^{n,k_n},\qquad B_n^+=\bigcup_{z:I_z\cap B_n\neq\emptyset}I_z^{n,k_n},
\]
such that $B_n^-\subseteq B_n\subseteq B_n^+$.
By arguments as in Theorem~\ref{thm:maingeometrictheorem1}, $\abs{B_n^-}/\abs{B_n}\to 1$ and $\abs{B_n^+}/\abs{B_n}\to 1$, respectively.
Also note that both $B_n^-$ and $B_n^+$ themselves will be unions of (at most) the same number of boxes as $B$. Then the sequences $(B_n^-\cap\Zd)_{n\in\NN}$ and $(B_n^+\cap\Zd)_{n\in\NN}$ together with $(x_n)_{n\in\NN}$ and $(\xi_v)_{v\in\Zd}$ satisfy the conditions of Theorem~\ref{thm:maximumtheorem} with
\[
	\abs{B_n^-\cap\Zd}\,\PP(\xi_0>x_n)\to \abs{B}\tau
	\quad\text{and}\quad 
	\abs{B_n^+\cap\Zd}\,\PP(\xi_0>x_n)\to \abs{B}\tau,
\]
since $\abs{B_n\cap\Zd}/\abs{D_n}\to \abs{B}$. Then, by Theorem~\ref{thm:maximumtheorem},
\begin{align*}
	\PP(M_\xi(B_n^-\cap\Zd)\leq x_n)&\to \exp(- \abs{B}\theta \tau)=	\PP(N(B)=0),\\
	\PP(M_\xi(B_n^+\cap\Zd)\leq x_n)&\to \exp(- \abs{B}\theta \tau)=	\PP(N(B)=0).
\end{align*}
The desired limit follows, since
\[
	\PP(M_\xi(B_n^+\cap\Zd)\leq x_n)\leq \PP(N_n(B)=0)\leq \PP(M_\xi(B_n^-\cap\Zd)\leq x_n).
\]
This concludes the proof.
\end{proof}

Next we define the alternative cluster counting process $\tilde{N}_n$ based on what intuitively is interpreted as clusters. For this we define the set 
\[
	Q_{n,k_n}^-=
	\bigl\{z\in\Zd\::\: z\frac{\bb{t_{n,k_n}}}{\sca}\in C\bigr\}.
\]
That is all indices $z$ where $J_z^{n,k_n}$ can be counted by $N_n$. Note that $P_{n,k_n}\subseteq Q_{n,k_n}^-\subseteq Q_{n,k_n}$. Furthermore, let $\tilde{D}_{n,k_n}$ be the union of the corresponding $J_z^{n,k_n}$-sets
\[
	\tilde{D}_{n,k_n}=\bigcup_{z\in Q_{n,k_n}^-}J_z^{n,k_n}.
\]
Now we consider the rescaled set of indices
\[
	\Phi_n=\{v/\sca\::\: v\in \tilde{D}_{n,k_n}\:,\: \xi_v>x_n\},
\]
where the field is above $x_n$. Note that both of the measures $N_n$ and $\tilde{N}_n$ are based on variables $\xi_v$ in the same extended index set $\tilde{D}_{n,k_n}$. To define $\tilde{N}_n$, we divide $\Phi_n$ into a number of disjoint clusters of points such that different clusters are separated by a certain distance. More precisely, we say that $u,u'\in \Phi_n$ are in the same cluster, if there is a sequence of distinct elements $u=u_0,u_1\ldots,u_R=u'$ in that cluster such that
\[
\abs{u_i-u_{i-1}}\leq \frac{\sqrt{d}}{k_n^{1/d}}
\]
for all $i=1,\ldots,R$. The distance $\sqrt{d}/k_n^{1/d}$ is asymptotically equivalent to the largest distance within the rescaled boxes $\bb{c_n^{-1}}J_z^{n,k_n}$, and therefore the clusters produced this way is somewhat comparable to those counted by $N_n$. The procedure uniquely divides $\Phi_n$ into $X_{n,k_n}$ disjoint clusters, where $0\leq X_{n,k_n}\leq \abs{\Phi_n}$. If $X_{n,k_n}\geq 1$, let these clusters be denoted $\mathcal{C}_i^{n,k_n}$ for $i=1,\ldots,X_{n,k_n}$. The ordering is arbitrary and will not be relevant subsequently. For each cluster, we define the cluster point $x_i^{n,k_n}$, meaning the point in $\Rd$ that represents the cluster, as the point in $\mathcal{C}_i^{n,k_n}$ closest to the mean of the points in the cluster, i.e.
\[
x_i^{n,k_n}=\argmin_{x\in\mathcal{C}_i^{n,k_n}}\sum_{x'\in \mathcal{C}_i^{n,k_n}}\abs{x-x'}^2.
\]
In principle, we could have used any systematically chosen point from $\mathcal{C}_i^{n,k_n}$ as the cluster point representing the cluster --- the above is just an intuitively natural choice. Alternatively, we could have used the actual mean of the points in $\mathcal{C}_i^{n,k_n}$, but it will be helpful in the subsequent arguments that the cluster point itself corresponds to (the rescaling of) an extremal observation.

Based on the cluster points $x_1^{n,k_n},\ldots,x_{X_{n,k_n}}^{n,k_n}$ we define a random point measure on $C$ as
\[
\tilde{N}_n(A)=\sum_{i=1}^{X_{n,k_n}}\I{A}(x_i^{n,k_n})
\]
for all $A\in \Bb(C)$ with the convention that $\tilde{N}_n(A)=0$ if $X_{n,k_n}=0$. Note that similar to $N_n$, the measure $\tilde{N}_n$ will also depend on the sequence $(k_n)$. 

In Theorem~\ref{thm:pointprocesstheoremv2} below we see that $(\tilde{N}_n)_{n\in\NN}$ converges in exactly the same way as $(N_n)_{n\in\NN}$. The proof relies on finding a set with sufficiently large probability, where the two measures $N_n$ and $\tilde{N}_n$ are  identical on the sets $A$ and $B$ under study in the proof of Theorem~\ref{thm:pointprocesstheorem}.

\begin{theorem}\label{thm:pointprocesstheoremv2}
Let $(D_n)_{n\in\NN}$ be defined as above, and let $(\xi_v)_\vZ$ be a stationary field satisfying $\mathcal{D}(x_n;K_n)$ for some sequence $(x_n)_{n\in\NN}$. Moreover, let $k_n$ be a sequence of integers satisfying $k_n\to\infty$, $k_n \alpha_n \to 0$ and $k_n^{1/d} \bb{\gamma_n} = o(\sca)$ as $n\to\infty$, such that $\mathcal{D}^\ell(x_n;k_n; \theta)$ is satisfied for some $\theta\in (0,1]$. If for some $ 0 < \tau < \infty$,
\[
	\abs{D_n} \PP(\xi_0>x_n) \to\tau
\]
then $\tilde{N}_n\stackrel{vd}{\to} N$, where $N$ is a homogeneous Poisson measure on $C$ with intensity $\theta\tau$.
\end{theorem}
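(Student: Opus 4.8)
The plan is to verify, exactly as in the proof of Theorem~\ref{thm:pointprocesstheorem}, the two conditions from \cite[Theorem~4.18]{Kallenberg2017}: that $\EE\tilde N_n(A)\to\EE N(A)=\theta\tau\abs{A}$ for every box $A=\bigtimes_{j=1}^d(a_j,b_j]$, and that $\PP(\tilde N_n(B)=0)\to\PP(N(B)=0)=\exp(-\theta\tau\abs{B})$ for every finite union of such boxes $B$. Following the remark preceding the theorem, the idea is to couple $\tilde N_n$ with the box count $N_n$ on a high-probability event on which the two cluster notions coincide. Two elementary geometric facts drive the coupling. First, the rescaled box $(1/\sca)\Jz$ has all side lengths at most $1/k_n^{1/d}$, hence diameter at most $\sqrt d/k_n^{1/d}$; thus any two rescaled exceedances lying in the same $\Jz$ are automatically within the linking distance $\sqrt d/k_n^{1/d}$ and therefore belong to the same $\tilde N_n$-cluster. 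Second, a rescaled exceedance in $\Jz$ can be directly linked only to rescaled exceedances in boxes $J_{z'}^{n,k_n}$ with $\norm{z-z'}_\infty\le\rho$ for a fixed integer $\rho=\lceil\sqrt d\rceil+1$, since boxes farther apart are separated by more than the linking distance.

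Writing $\delta_n=\sqrt d/k_n^{1/d}\to0$ and $H_0^*=\Jz\setminus H_0^{n,k_n}$ as in Lemma~\ref{lem:Happrox}, I would introduce the good event
\[
	E_n=\{\text{no }v\text{ with }\xi_v>x_n\text{ lies in any buffer }J_z^{n,k_n}\setminus H_z^{n,k_n}\}\cap\{\text{no two distinct exceedance boxes are linked}\}.
\]
The main work, and the step I expect to be the principal obstacle, is to show $\PP(E_n)\to1$. For the buffer part, a union bound and stationarity give $\PP(\exists\text{ buffer exceedance})\le q_{n,k_n}\abs{H_0^*}\PP(\xi_0>x_n)$, which tends to $0$ by the estimate \eqref{eq:helpequation1} together with $q_{n,k_n}\sim k_n$. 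For the merge part, the probability that two distinct exceedance boxes are linked is at most $\sum_z\sum_{0<\norm{z'-z}_\infty\le\rho}\PP(M_\xi(\Jz)>x_n,\,M_\xi(J_{z'}^{n,k_n})>x_n)$; splitting each summand through the buffer estimate reduces it to the corresponding probability for the $\bb{\gamma_n}$-separated boxes $H_z^{n,k_n},H_{z'}^{n,k_n}$, to which the mixing condition \eqref{eq:mixingcond} applies and yields $\PP(M_\xi(H_z^{n,k_n})>x_n)\PP(M_\xi(H_{z'}^{n,k_n})>x_n)+\alpha_n=O(k_n^{-2})+\alpha_n$. Summing over the $\sim k_n$ boxes and the $O(1)$ neighbours produces $O(k_n^{-1})+O(k_n\alpha_n)+o(1)\to0$, using the standing assumption $k_n\alpha_n\to0$. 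It is precisely that neighbouring boxes are \emph{not} $\bb{\gamma_n}$-separated which forces this detour through the separated sets $H_z^{n,k_n}$ and the buffer estimate.

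On $E_n$ the $\tilde N_n$-clusters are in bijection with the exceedance boxes counted by $N_n$, and the cluster point of each such box lies within $\delta_n$ of its corner $z\,\bb{t_{n,k_n}}/\sca$. Hence, fixing $\epsilon>0$ and using $\delta_n<\epsilon$ for large $n$, on $E_n$ one has the sandwich $N_n(B^{-\epsilon})\le\tilde N_n(B)\le N_n(B^{+\epsilon})$, where $B^{\pm\epsilon}$ are inner/outer coordinate-wise $\epsilon$-approximations of $B$ by finite unions of boxes with $\abs{B^{\pm\epsilon}}\to\abs{B}$ as $\epsilon\to0$ (using $\abs{\partial B}=0$). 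Since $\PP(E_n^c)\to0$ and the arguments in the proof of Theorem~\ref{thm:pointprocesstheorem} (via Theorem~\ref{thm:maximumtheorem}) give $\PP(N_n(B^{\pm\epsilon})=0)\to\exp(-\theta\tau\abs{B^{\pm\epsilon}})$, the inclusions $\{\tilde N_n(B)=0\}\cap E_n\subseteq\{N_n(B^{-\epsilon})=0\}$ and $\{N_n(B^{+\epsilon})=0\}\cap E_n\subseteq\{\tilde N_n(B)=0\}$ yield $\exp(-\theta\tau\abs{B^{+\epsilon}})\le\liminf_n\PP(\tilde N_n(B)=0)\le\limsup_n\PP(\tilde N_n(B)=0)\le\exp(-\theta\tau\abs{B^{-\epsilon}})$; letting $\epsilon\to0$ gives the void probability.

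For the intensity I would avoid uniform-integrability issues by bounding $\tilde N_n(A)$ deterministically from both sides. Since all exceedances in a box lie in one cluster, each box hosts the cluster point of at most one cluster, so the map sending a cluster with point in $A$ to the box containing that point is injective; this gives $\tilde N_n(A)\le\#\{z\in Q_{n,k_n}^-:M_\xi(\Jz)>x_n,\ (1/\sca)\overline{\Jz}\cap A\neq\emptyset\}$, and taking expectations with $\PP(M_\xi(\Jz)>x_n)\sim\theta\tau/k_n$ and the box count $\sim k_n\abs{A}$ yields $\limsup_n\EE\tilde N_n(A)\le\theta\tau\abs{A}$ with no good event required. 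For the lower bound, each $z$ with $(1/\sca)\overline{\Jz}\subseteq A$ that has an exceedance but no exceedance in any box within $\ell_\infty$-distance $\rho$ contributes a distinct cluster whose point lies in $A$; hence $\tilde N_n(A)$ dominates the number of such isolated boxes, whose expectation is, by a union bound, at least $\sum_{z:(1/\sca)\overline{\Jz}\subseteq A}\PP(M_\xi(\Jz)>x_n)$ minus exactly the merge sum estimated above, so $\liminf_n\EE\tilde N_n(A)\ge\theta\tau\abs{A}$. Combining the two bounds gives $\EE\tilde N_n(A)\to\theta\tau\abs{A}=\EE N(A)$, completing the verification.
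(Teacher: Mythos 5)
Your proposal is correct, and it reaches the conclusion by a genuinely different route in the key step. The paper also couples $\tilde N_n$ with $N_n$ on a high-probability event, but it controls cluster merging \emph{through the already-proven limit} $N_n\stackrel{vd}{\to}N$: it fixes a finite cover $\mathcal E_m$ of $C$ by overlapping boxes of side $2/m$, uses the finite-dimensional convergence of $(N_n(E))_{E\in\mathcal E_m}$ to transfer the bound $\PP(N(E)\ge 2)=O(m^{-2d})$ back to $N_n$, and takes a double limit ($n\to\infty$ then $m\to\infty$); likewise the mean is handled via convergence in distribution of the truncated counts together with $\EE N_n(A)\to\EE N(A)$. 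You instead build a single good event $E_n$ with $\PP(E_n)\to 1$ by estimating the merge probability directly at the level of the field: inclusion--exclusion turns the mixing bound \eqref{eq:mixingcond} on $\PP(M_\xi(A\cup B)\le x_n)$ into a bound on the joint exceedance probability of the $\bb{\gamma_n}$-separated cores $H_z^{n,k_n},H_{z'}^{n,k_n}$ (the detour through the buffers, controlled by \eqref{eq:helpequation1}, is indeed forced because adjacent $J$-boxes are not separated), giving $O(k_n^{-1})+O(k_n\alpha_n)\to 0$ after summation; and you replace the uniform-integrability step for the mean by deterministic injectivity/isolation counting bounds. What each approach buys: the paper's argument reuses Theorem~\ref{thm:pointprocesstheorem} as a black box and needs no further appeal to the mixing condition, at the cost of the two-parameter limit and the slightly delicate expectation argument; yours is quantitatively self-contained with a single limit in $n$ and an elementary treatment of $\EE\tilde N_n(A)$, at the cost of one extra second-moment computation. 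One small point to make explicit when writing this up: the boxes $B^{\pm\epsilon}$ need not lie in $C$, so the void-probability limits for $N_n(B^{\pm\epsilon})$ should be obtained, as in the paper's proof of Theorem~\ref{thm:pointprocesstheorem}, by sandwiching with the lattice approximations $(B^{\pm\epsilon})^{\mp}_n$ and invoking Theorem~\ref{thm:maximumtheorem}; this is routine and does not affect the argument.
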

\begin{proof}
Similarly to the proof of Theorem~\ref{thm:pointprocesstheorem} it suffices to show that $\EE \tilde{N}_n(A)\to \EE N(A)$, where $A\subseteq C$ is a box on the form $A=\bigtimes_{j=1}^d(a_j,b_j]$ and that $\PP(\tilde{N}_n(B)=0)\to \PP(N(B)=0)$ for all $B$, where $B\subseteq C$ is a finite union of boxes on the form $\bigtimes_{j=1}^d(a_j,b_j]$.

Let $m\in\NN$ and define the events
\begin{align*}
	A_m&= \{ N(\partial A\oplus [-1/m,1/m]^d)=0\},\\
	A_{m,n}&= \{ N_n(\partial A\oplus [-1/m,1/m]^d)=0\},\\
	B_m&= \{N(\partial B\oplus [-1/m,1/m]^d)=0\},\\
	B_{m,n}&= \{N_n(\partial B\oplus [-1/m,1/m]^d)=0\},
\end{align*}
with the convention that $N_n$ and $N$ are 0 outside of $C$. 
These sets represent that there is no activity close (with $m$ large) to the boundaries of the sets $A$ and $B$, and they will help to ensure that clusters counted inside $A$ and $B$ by $N_n$ will also be counted as inside $A$ and $B$ by $\tilde{N}_n$. Note that $\PP(A_m^c)\to 0$ and $\PP(B_m^c)\to 0$ as $m\to\infty$.

Furthermore consider the finite collection $\mathcal{E}_m$ of overlapping boxes with side lengths $2/m$, indexed by the same $m\in\NN$ as above,
\[
	\mathcal{E}_m=
	\Bigl\{\frac{z}{m}+[-1/m,1/m]^d\::\: 
	\mathrm{dist}\Big(\frac{z}{m}, C\Big)\leq \frac{\sqrt{d}}{m},\, z\in\ZZ^d \Bigr\}.
\]
We define the events
\begin{align*}
E_m&=\{N(E)\leq 1\text{ for all }E\in \mathcal{E}_m\},\\
E_{m,n}&=\{N_n(E)\leq 1\text{ for all }E\in \mathcal{E}_m\},
\end{align*}
and notice that the overlapping nature of the sets in $\mathcal{E}_m$ ensures that no sufficiently small neighborhood in $C$ experiences more than one of the clusters counted by $N_n$.

Using the facts that the number of sets in $\mathcal{E}_m$ is of order $m^d$ and that $\PP(N(E)\geq 2)=O(1/m^{2d})$ together with Boole's inequality, it is seen that
\[
\PP(E_m^c)=O(1/m^d)
\]
as $m\to\infty$.

From Theorem~\ref{thm:pointprocesstheorem} we have $\PP(E_{m,n})\to \PP(E_{m})$ and $\PP(B_{m,n})\to \PP(B_{m})$ as $n\to\infty$, so for given $\delta>0$ we can choose $m,n_0\in\NN$ such that
\[
	\PP(B_{m,n}^c)< \delta \quad\text{and}\quad \PP(E_{m,n}^c)< \delta
\]
for all $n\geq n_0$. 

Note that any distance between two points within a set $\bb{c_n^{-1}}J_z^{n,k_n}$ is smaller than $\sqrt{d}/k_n^{1/d}$. Hence, choosing $n$ large enough ensures that for any set $\bb{c_n^{-1}}J_z^{n,k_n}$ the collection of all sets $\bb{c_n^{-1}}J_{z'}^{n,k_n}$ within a distance of at most $\sqrt{d}/k_n^{1/d}$ from $\bb{c_n^{-1}}J_z^{n,k_n}$ is fully contained in at least one set from $\mathcal{E}_n$.
For instance, choosing $n$ such that $(2\sqrt{d}+3)/k_n^{1/d}< 1/m$ suffices, and this choice will also be used in the proof of Theorem~\ref{thm:meancluster}.

With this choice of $n$ we have $N_n(A)=\tilde{N}_n(A)$ on $A_{m,n}\cap E_{m,n}$ and that $N_n(B)=\tilde{N}_n(B)$ on $B_{m,n}\cap E_{m,n}$: By definition of the two types of clusters any cluster point for $\tilde{N}_n$ will be counted as at least one cluster point for $N_n$. The set $E_{m,n}$ ensures that a cluster for $\tilde{N}_n$ is also at most one cluster for $N_n$. Furthermore, assuming the sets $A_{m,n}$ and $B_{m,n}$ gives that there is the same amount of clusters inside $A$ and $B$, respectively.

Now we easily see that
\[
	\PP(\tilde{N}_n(B)=0)\in 
	\bigl(\PP(N_n(B)=0)-2\delta, \,\PP(N_n(B)=0)+2\delta \bigr)
\]
for $n$ large, which together with Theorem~\ref{thm:pointprocesstheorem} gives the desired convergence of $\PP(\tilde{N}_n(B)=0)$ by letting $n\to\infty$ and then $\delta\to 0$. 

Next we turn to showing the convergence $\EE \tilde{N}_n(A)\to \EE N(A)$. First we find, using that $N_n(A)$ and $\tilde{N}_n(A)$ are equal on $A_{m,n}\cap E_{m,n}$ for $n$ large,
\begin{align*}
\liminf_{n\to\infty}\EE \tilde{N}_n(A)&\geq \liminf_{n\to\infty}\EE (\tilde{N}_n(A); A_{m,n}\cap E_{m,n})\\
&=\liminf_{n\to\infty}\EE (N_n(A); A_{m,n}\cap E_{m,n})=\EE (N(A); A_{m}\cap E_{m}).
\end{align*}
The last equality follows, since $\{N_n(A), N_n(E)\::\: E\in \mathcal{E}_m\}$ converges in distribution to $\{N(A), N(E)\::\: E\in \mathcal{E}_m\}$, such that also $(N_n(A), 1_{A_{m,n}\cap E_{m,n}})$ converges in distribution to $(N(A), 1_{A_{m}\cap E_{m}})$, and in addition $N_n(A)1_{A_{m,n}\cap E_{m,n}}\leq N_n(A)$, where $\EE N_n(A)\to \EE N(A)$. Letting $m\to\infty$ shows that $\liminf_{n\to\infty}\EE \tilde{N}_n(A)\geq \EE N(A)$. 

For the upper bound we utilize the inequality $\tilde{N}_n(A)\leq N_n(A\oplus [-1/m,1/m]^d)$ for $n$ large. Then, with similar considerations,
\begin{align*}
&\limsup_{n\to\infty}\EE \tilde{N}_n(A) \\
&\leq \limsup_{n\to\infty}\EE (N_n(A); A_{m,n}\cap E_{m,n})\\
&\phantom{=}+\limsup_{n\to\infty}\EE (N_n(A\oplus (-1/m,1/m]^d); A_{m,n}^c\cup E_{m,n}^c)\\
&=\EE (N(A); A_{m}\cap E_{m})+\EE (N(A\oplus(-1/m,1/m]^d); A_{m}^c\cup E_{m}^c) .
\end{align*}
Letting $m\to\infty$ together with integrability of $N(A)$ and $N(A\oplus (-1/m,1/m]^d)$ gives $\limsup_{n\to\infty}\EE \tilde{N}_n(A)\leq \EE N(A)$ as desired. 
\end{proof}

%----------------------------
	\section{Mean number of points in clusters}
	\label{sec:meannumberpoints}
%----------------------------

We continue considering the framework used to introduce the cluster processes $N_n$ and $\tilde{N}_n$ in Section~\ref{sec:clusterpointmeasures}. In particular, 
we define $(D_n)_{n\in\NN}$ as
\[
	D_n=(\sca C)\cap \Zd
\]
for a $p$-convex set $C\subseteq \Rd$ with $\abs{C}=1$. 
In this section we study the mean cluster size with respect to both of the cluster definitions. It should be noted that under the assumptions of Theorem~\ref{thm:pointprocesstheorem} and \ref{thm:pointprocesstheoremv2} the limit of the mean number of clusters within $C$ with respect to either definition is $\theta\tau$. The expected number of extremal points above $x_n$ within $D_n$ is (before rescaling)
\[
\sum_{v\in D_n}\PP(\xi_0>x_n),
\]
which by assumption converges to $\tau$ as $n\to\infty$. Thus one would expect that the mean number of extremal points within each cluster in the limit will be $1/\theta$. In Theorem~\ref{thm:meancluster} below, we show that this is indeed true. However, for the clusters counted by $\tilde{N}_n$ we need to impose a stronger mixing condition than $\mathcal{D}(x_n;K_n)$. Furthermore we demonstrate via conditioning that this limit is still valid independently of how many clusters there are in $C$ in total.

In relation to the cluster counting process $N_n$, where a set $J_z^{n,k_n}$ is counted as a cluster if $M_\xi(J_z^{n,k_n})>x_n$, we define the number of points in this potential cluster as
\[
	Y^{n,k_n}_z=\abs[\big]{\{v\in J_z^{n,k_n}\::\: \xi_v>x_n\}} .
\]
Clearly, $J_z^{n,k_n}$ represents a cluster if $Y_z^{n,k_n}>0$. We will be interested in the mean number of points in such a cluster, i.e.
\[
	\EE(Y_0^{n,k_n}\mid Y_0^{n,k_n}>0) .
\]

Similarly, for the cluster point measure $\tilde{N}_n$, there are clusters if $X_{n,k_n}=\tilde{N}_n(C)>0$. In that case let $\mathcal{C}$ be a cluster chosen uniformly among the $\tilde N_n(C)$ clusters. More specifically, we assume that $\mathcal{C}=\mathcal{C}^{n,k_n}_S$, where conditioned on $(\tilde{N}_n(C)=\ell)$ for each $\ell\in\NN$, the variable $S$ is uniform on $\{1,\ldots,\ell\}$ and independent of everything else.
Then we consider the mean cluster size defined as
\[
\EE(\abs{\mathcal{C}}\mid \tilde{N}_n(C)>0).
\]
In particular, we will focus on the limit of these conditional means as $n\to\infty$, and results for this are found in Theorem~\ref{thm:meancluster} below. We almost immediately have a result for the first of the two, while the second result requires the following stronger version of the mixing condition $\mathcal{D}(x_n;K_n)$.

\begin{cond4}
\label{cond:mixingcond2}
The condition $\overline{\mathcal{D}}(x_n;k_n;K_n)$ is satisfied for the stationary field $(\xi_v)_\vZ$ if there exists an increasing sequence $(\bb{\gamma_n})$ of $d$-dimensional vectors with $k_n^{1/d} \bb{\gamma_n} = o(\sca)$ such that the following holds: First, for all $\bb{\gamma_n}$-separated sets $A,B\subseteq K_n$,
\begin{equation}\label{eq:mixingcond21}
	\abs[\big]{
	\PP ((M_\xi(A)\leq x_n)\cap \mathcal{B}) -
	\PP (M_\xi(A)\leq x_n)
	\PP (\mathcal{B})
	}
	\le \alpha_n,
\end{equation}
where $\mathcal{B}\in\sigma((\xi_{v'}>x_n)\::\: v'\in B\})$. Secondly, for all $v\in K_n$ and  $B \subseteq K_n$, where $\{v\}$ and $B$ are $\bb{\gamma_n}$-separated,
\begin{equation}\label{eq:mixingcond22}
	\abs[\big]{
	\PP ((\xi_v>x_n)\cap \mathcal{B}) -
	\PP (\xi_v>x_n)
	\PP (\mathcal{B})
	}
	\le \tilde{\alpha}_n,
\end{equation}
where $\mathcal{B}\in\sigma((\xi_{v'}>x_n)\::\: v'\in B\})$. The sequences $(\alpha_n)$ and $(\tilde{\alpha}_n)$ satisfy
\[
	k_n\alpha_n\to 0
	\qquad\text{and}\qquad 
	\tilde{\alpha}_n =o(\PP(\xi_0>x_n))
\]
as $n\to \infty$.
\end{cond4}

Clearly (\ref{eq:mixingcond21}) in itself constitutes a stronger requirement than (\ref{eq:mixingcond}) in condition $\mathcal{D}(x_n;K_n)$, while (\ref{eq:mixingcond22}) serves as an additional assumption not directly related to (\ref{eq:mixingcond21}). Both however, express approximate extremal independence.

\setcounter{example}{1}
\begin{example}[Continued]
If $(\xi_v)_{v\in\Zd}$ is $m$-dependent, then $\overline{\mathcal{D}}(x_n;k_n;K_n)$ is satisfied, similarly to how $\mathcal{D}(x_n;K_n)$ is satisfied.
\end{example}

For understanding the formulation of the following theorem, recall that $Q_{n,k_n}^-$ is defined as the set of all $z\in\Zd$, where the cluster point $z$ (as counted by $N_n$) of $J_z^{n,k_n}$ is inside $D_n$.

\begin{theorem}\label{thm:meancluster}
Let $(D_n)_{n\in\NN}$ be defined as above, and let $(\xi_v)_\vZ$ be a stationary field satisfying $\mathcal{D}(x_n;K_n)$ for some sequence $(x_n)_{n\in\NN}$. Moreover, let $k_n$ be a sequence of integers satisfying $k_n\to\infty$, $k_n \alpha_n \to 0$ and $k_n^{1/d} \bb{\gamma_n} = o(\sca)$ as $n\to\infty$, such that $\mathcal{D}^\ell(x_n;k_n; \theta)$ is satisfied for some $\theta\in (0,1]$. Assume for some $ 0 < \tau < \infty$ that
\[
	\abs{D_n} \PP(\xi_0>x_n) \to \tau.
\]
Then as $n\to\infty$
\begin{equation}\label{fml:meancluster1}
\EE(Y_0^{n,k_n}\mid Y_0^{n,k_n}>0)\to 1/\theta.
\end{equation}
If furthermore $\overline{\mathcal{D}}(x_n;k_n;K_n)$ is satisfied, then 
\begin{equation}\label{fml:meancluster2}
\EE(\abs{\mathcal{C}}\mid \tilde{N}_n(C)>0)\to 1/\theta
\end{equation}
as $n\to\infty$, and additionally 
\begin{equation}\label{fml:meancluster12}
	\max_{z\in Q_{n,k_n}^-}\abs[\big]{\EE(Y_z^{n,k_n}\mid Y_z^{n,k_n}>0,N_n(C)=\ell)- 1/\theta}\to 0
\end{equation}
and 
\begin{equation}\label{fml:meancluster22}
\EE(\abs{\mathcal{C}}\mid \tilde{N}_n(C)=\ell)\to 1/\theta
\end{equation}
for all $\ell\in\NN$.
\end{theorem}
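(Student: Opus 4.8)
The plan is to establish the four limits in the order (\ref{fml:meancluster1}), (\ref{fml:meancluster12}), (\ref{fml:meancluster22}), (\ref{fml:meancluster2}), as each builds on the previous ones. For (\ref{fml:meancluster1}) I would write $\EE(Y_0^{n,k_n}\mid Y_0^{n,k_n}>0)=\EE(Y_0^{n,k_n})/\PP(Y_0^{n,k_n}>0)$. By stationarity the numerator equals $\abs{J_0^{n,k_n}}\PP(\xi_0>x_n)\sim(\abs{D_n}/k_n)\PP(\xi_0>x_n)\sim\tau/k_n$, while the denominator is $\PP(M_\xi(J_0^{n,k_n})>x_n)$, for which the chain of equivalences already displayed in the proof of Theorem~\ref{thm:pointprocesstheorem} gives $k_n\PP(M_\xi(J_0^{n,k_n})>x_n)\sim\theta\tau$. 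The ratio therefore tends to $1/\theta$; this part uses only $\mathcal{D}(x_n;K_n)$ and $\mathcal{D}^\ell(x_n;k_n;\theta)$.

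\emph{The decoupling estimate (\ref{fml:meancluster12}).} Fix $\ell\in\NN$ and, abbreviating $Y_z:=Y_z^{n,k_n}$, write the conditional mean as $\EE(Y_z\I{\{N_n(C)=\ell\}})/\PP(Y_z>0,N_n(C)=\ell)$, using $Y_z\I{\{Y_z>0\}}=Y_z$. Decompose the remaining boxes into the bounded family $\mathcal{N}_z$ of neighbours of $\Jz$ (those not $\bb{\gamma_n}$-separated from it) and the far boxes, whose cluster count $N_z^{\mathrm{far}}$ is $\bb{\gamma_n}$-separated from $\Jz$. Since $\gamma_{n,j}/t_{n,k_n}^j\to0$ for each $j$, a Markov bound shows that the conditional probability, given $\{Y_z>0\}$, of an exceedance lying within the mixing range of $\partial\Jz$ --- equivalently of any neighbour being simultaneously a cluster --- is $O\bigl(\sum_{j=1}^d\gamma_{n,j}/t_{n,k_n}^j\bigr)\to0$, so the neighbours may be dropped. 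On the remaining event I would decouple the local quantities from $\{N_z^{\mathrm{far}}=\ell-1\}$ using $\overline{\mathcal{D}}$: the single-point bound (\ref{eq:mixingcond22}) summed over $v\in\Jz$ for the numerator, and the maximum bound (\ref{eq:mixingcond21}) for the denominator. Both then factor as a local factor times $\PP(N_z^{\mathrm{far}}=\ell-1)$, and since $\PP(N_z^{\mathrm{far}}=\ell-1)\to\PP(N(C)=\ell-1)>0$ this common factor cancels in the ratio, leaving $\EE(Y_z)/\PP(Y_z>0)\to1/\theta$ by the previous step. All bounds are uniform in $z$ by stationarity and the uniformity of the mixing constants, which yields the maximum in (\ref{fml:meancluster12}).

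\emph{Transfer to $\tilde{N}_n$, giving (\ref{fml:meancluster22}) and (\ref{fml:meancluster2}).} First I would upgrade (\ref{fml:meancluster12}) to a uniformly chosen box: writing that mean as $\tfrac1\ell\sum_z\EE(Y_z\I{\{Y_z>0\}}\mid N_n(C)=\ell)$ and inserting (\ref{fml:meancluster12}) gives $\tfrac{1}{\ell\theta}\EE(N_n(C)\mid N_n(C)=\ell)=1/\theta$, since $\sum_z\I{\{Y_z>0\}}=N_n(C)$. To pass to $\tilde{N}_n$ I reuse the event $E_{m,n}$ from the proof of Theorem~\ref{thm:pointprocesstheoremv2}: for $n$ large relative to $m$, on $E_{m,n}$ no two $N_n$-clusters lie within the merging distance, so each $\tilde{N}_n$-cluster is the exceedance set of a single $\Jz$, the multiset of cluster sizes equals $\{Y_z:Y_z>0\}$, and $\tilde{N}_n(C)=N_n(C)$. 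Hence on $E_{m,n}$ a uniformly chosen $\tilde{N}_n$-cluster coincides with a uniformly chosen nonempty box, so $\EE(\abs{\mathcal{C}}\mid\tilde{N}_n(C)=\ell)$ equals the $N_n$ quantity up to the contribution of $E_{m,n}^c$; letting $n\to\infty$ and then $m\to\infty$ gives (\ref{fml:meancluster22}), and summing (\ref{fml:meancluster22}) against the conditional-Poisson weights $\PP(\tilde{N}_n(C)=\ell\mid\tilde{N}_n(C)>0)$ gives (\ref{fml:meancluster2}).

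\emph{Main obstacle.} The delicate points are all of uniform-integrability type and are precisely what force $\overline{\mathcal{D}}$ rather than $\mathcal{D}$. In the decoupling step one must check that the errors are negligible against the main terms: the numerator error $\abs{\Jz}\tilde{\alpha}_n=o(\abs{\Jz}\PP(\xi_0>x_n))=o(\EE Y_z)$ uses $\tilde{\alpha}_n=o(\PP(\xi_0>x_n))$, while the denominator error $\alpha_n=o(\PP(Y_z>0))$ uses $k_n\alpha_n\to0$; the plain condition $\mathcal{D}$ controls only events about maxima and cannot bound the pair probabilities $\PP(\xi_v>x_n,\xi_w>x_n)$ needed here. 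One must likewise show $\EE(Y_z\I{\{\text{boundary-straddling}\}})=o(\EE Y_z)$, and control both the merged clusters on $E_{m,n}^c$ and the interchange of limit and sum over $\ell$ in (\ref{fml:meancluster2}) \emph{in mean} rather than merely in distribution; all of these reduce to a second-moment bound $\sup_n\EE\abs{\Phi_n}^2<\infty$ supplied by (\ref{eq:mixingcond22}).
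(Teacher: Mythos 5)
Your skeleton matches the paper's: compute $\EE(Y_0^{n,k_n})/\PP(Y_0^{n,k_n}>0)$ for \eqref{fml:meancluster1}; decouple the local box from the conditioning event via $\overline{\mathcal{D}}$ for \eqref{fml:meancluster12}; transfer to $\tilde N_n$ on the event $E_{m,n}$ for \eqref{fml:meancluster22}; and truncate in $\ell$ for \eqref{fml:meancluster2}. The first step is correct as written. But the execution of the decoupling step has a genuine gap. You propose to discard the ``neighbour'' boxes by arguing that, given $Y_z^{n,k_n}>0$, the probability of a neighbour being simultaneously a cluster is $O\bigl(\sum_j\gamma_{n,j}/t_{n,k_n}^j\bigr)$ because this is ``equivalently'' the event of an exceedance within the mixing range of $\partial J_z^{n,k_n}$. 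That equivalence is false: an adjacent box is a cluster as soon as it contains any exceedance, which may lie far from $\partial J_z^{n,k_n}$, so the Markov bound on the boundary layer does not cover it. More importantly, the whole neighbour-dropping step is both unnecessary and costly: it forces you to control $\EE\bigl(Y_z^{n,k_n}\I{A}\bigr)$ for a small event $A$ with $Y_z^{n,k_n}$ unbounded. The paper avoids this entirely by using the exact identity that, for $z\in Q_{n,k_n}^-$ and $v\in J_z^{n,k_n}$ with $\xi_v>x_n$, one has $N_n(C)=N_n(C\setminus \bb{c_n^{-1}}J_z^{n,k_n})+1$; it then shrinks $J_z^{n,k_n}$ to a set $\tilde H_z^{n,k_n}$ that is $\bb{\gamma_n}$-separated from everything outside $J_z^{n,k_n}$ (the discarded layer contributes $o(\abs{J_z^{n,k_n}})$ terms, each bounded by $\PP(\xi_0>x_n)$), and applies \eqref{eq:mixingcond22} term by term in the sum $\sum_{v}\PP(\xi_v>x_n,\cdot)$. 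No statement about neighbours being non-clusters is ever needed.

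The second, and more serious, gap is your fallback for all the uniform-integrability steps (the contribution of $E_{m,n}^c$, the boundary layer, and the interchange of limit and sum over $\ell$ in \eqref{fml:meancluster2}): you claim these reduce to $\sup_n\EE\abs{\Phi_n}^2<\infty$, ``supplied by \eqref{eq:mixingcond22}.'' That bound is not available. Condition \eqref{eq:mixingcond22} gives $\PP(\xi_v>x_n,\xi_w>x_n)\le\PP(\xi_0>x_n)^2+\tilde\alpha_n$ only for $\bb{\gamma_n}$-separated pairs; summing over the $\sim\abs{D_n}^2$ such pairs, the error term contributes $\abs{D_n}^2\tilde\alpha_n=o\bigl(\abs{D_n}^2\PP(\xi_0>x_n)\bigr)=o(\tau\abs{D_n})\to\infty$, and the non-separated pairs contribute a further term of order $\tau\prod_\ell\gamma_{n,\ell}\to\infty$. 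So the second moment of $\abs{\Phi_n}$ may diverge under the stated hypotheses, and every step you defer to it is unsupported. The paper instead controls each such expectation linearly, writing $\EE\bigl(Y_z^{n,k_n}\I{\mathcal B}\bigr)=\sum_{v\in J_z^{n,k_n}}\PP(\xi_v>x_n,\mathcal B)$ and decoupling each summand with \eqref{eq:mixingcond22} (this is the content of \eqref{fml:meanclusterEset} and the surrounding estimates, using the covers $\mathcal{E}_{m,z,n}$ and $\mathcal{F}_{m,z,n}$, and of the truncation bound $\limsup_n\EE(\abs{\mathcal{C}}\I{\{\tilde N_n(C)>\ell_0\}})\le\tau\PP(N(C)>\ell_0-1)$). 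A smaller omission: the uniformity in $z$ of $\PP\bigl(N_n(C\setminus\bb{c_n^{-1}}J_z^{n,k_n})=\ell-1\bigr)\to\PP(N(C)=\ell-1)$, which you assert, itself requires the $\mathcal{E}_m$-covering argument and $\PP(N(E^{0,m})=0)\to1$. On the positive side, your identity $\tfrac1\ell\sum_z\EE(Y_z^{n,k_n}\I{\{Y_z^{n,k_n}>0\}}\mid N_n(C)=\ell)=\tfrac1{\ell\theta}\,\EE(N_n(C)\mid N_n(C)=\ell)+o(1)=1/\theta+o(1)$ is a clean shortcut to the $N_n$-side of \eqref{fml:meancluster22}, slightly tidier than the paper's route via $\PP(N(C)=\ell-1)/\PP(N(C)=\ell)=\ell/(\theta\tau)$.
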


Recall that $\overline{\mathcal{D}}(x_n;k_n;K_n)$ is a stronger requirement than $\mathcal{D}(x_n;K_n)$, so requiring $\overline{\mathcal{D}}(x_n;k_n;K_n)$ for \eqref{fml:meancluster2}--\eqref{fml:meancluster22} makes the assumption of $\mathcal{D}(x_n;K_n)$ superfluous. The proof of Theorem~\ref{thm:meancluster} is rather technical and is deferred to Appendix~\ref{appnB}.

\setcounter{example}{2}
\begin{example}[Continued]
Recall that the field $(\xi_v)_\vZ$ with $\xi_v=\max_{z\in v+B} Y_z$ introduced in Example~\ref{ex:clusterexample} has extremal index $1/\abs{B}$ with respect to $(D_n)$. Let $(x_n)$ be chosen such that $\abs{D_n} \PP(\xi_0>x_n) \to \tau$ for some $0<\tau<\infty$. Since the field is $m$-dependent for a sufficiently large $m$, the condition $\overline{\mathcal{D}}(x_n;k_n;K_n)$ is satisfied. Then, due to Theorem~\ref{thm:meancluster}, the limiting mean number of extremal points within a cluster will not surprisingly be $\abs{B}$ with respect to both cluster definitions.
\end{example}

%----------------------------
	\section{Cluster counting measure on original scale}
	\label{sec:originalscale}
%----------------------------

The results from Section~\ref{sec:clusterpointmeasures} can be used to assess the limiting behavior of the number of clusters that $(\xi_v)_{v\in D_n}$ has above the level $x_n$ in the rescaled set $\sca A$,
where $A$ is some fixed subset of $C$. Describing the number of clusters in more general sequences of sets, only satisfying Assumption~\ref{ass:Cnassumption}, is not obtained directly from Section~\ref{sec:clusterpointmeasures}. Such a result is instead part of Theorem~\ref{thm:originalscale} below.
	
Again, we assume that the sequence $(D_n)_{n\in\NN}$ has the specific form
\[
	D_n=(\sca C) \cap \Zd
\]
with $\abs{C}=1$, such that Assumption~\ref{ass:Cnassumption} is satisfied.
This will not in itself be the set sequence of interest, but it will contain a more general sequence satisfying Assumption~\ref{ass:Cnassumption}. As before we let $(k_n)$ and $(x_n)$ be sequences satisfying some specified conditions. We define a cluster counting measure $L_n $ on $\Zd$ as
\[
	L_n(A)= \sum_{z\in\Zd}\I{A\cap D_n} (\bb{t_{n,k_n}}z)
	\I{\{M_\xi(J_z^{n,k_n})>x_n\}} 
\]
for all $A\subseteq \Zd$. That is simply a transformation of the measure $N_n$ back to the original scale and now regarded as a measure on $\Zd$.

Recalling the definition of the cluster points 
$x_1^{n,k_n},\ldots,x^{n,k_n}_{X_{n,k_n}}$
used to construct $\tilde{N}_n$, we define the measure $\tilde{L}_n$ on $\Zd$ as
\[
\tilde{L}_n(A)=\sum_{i=1}^{X_{n,k_n}}\I{A\cap D_n}(\bb{c_n}x_i^{n,k_n})
\]
for $A\subseteq \Zd$, with the convention that $\tilde{L}_n(A)=0$ if $X_{n,k_n}=0$.

Note that, as opposed to the measures $N_n$ and $\tilde{N}_n$ defined in Section~\ref{sec:clusterpointmeasures} above, both $L_n$ and $\tilde{L}_n$ are measures on the original index set scale.

\begin{theorem}\label{thm:originalscale}
Let $(\xi_v)_\vZ$ be a stationary field satisfying $\mathcal{D}(x_n;K_n)$ for some sequence $(x_n)_{n\in\NN}$. Moreover, let $k_n$ be a sequence of integers satisfying $k_n\to\infty$, $k_n \alpha_n \to 0$ and $k_n^{1/d} \bb{\gamma_n} = o(\sca)$ as $n\to\infty$, such that $\mathcal{D}^\ell(x_n;k_n; \theta)$ is satisfied for some $\theta\in (0,1]$.

Let $(B_n^1)_{n\in\NN},\dots,(B_n^G)_{n\in\NN}$ be sequences of sets in $\Zd$, each satisfying Assumption~\ref{ass:Cnassumption}, such that $B_n^g\subseteq D_n$ for all $n$ and $g$, and $B_n^1,\ldots,B_n^G$ are pairwise disjoint. Assume furthermore that
\[
	\lim_{n\to\infty}\frac{\abs{B_n^g}}{\abs{D_n}}=b_g
\]
for each $g=1,\ldots,G$, where $0\leq b_g<\infty$. If for some $ 0 < \tau < \infty$,
\[
	\abs{D_n} \PP(\xi_0>x_n) \to \tau
\]
then 
\begin{equation}\label{fml:originalscaleresult}
	(L_n(B_n^1),\ldots,L_n(B_n^G))\stackrel{\mathcal{D}}{\to} (L^1,\ldots,L^G)
\end{equation}
and 
\begin{equation}\label{fml:originalscaleresulttilde}
(\tilde{L}_n(B_n^1),\ldots,\tilde{L}_n(B_n^G))\stackrel{\mathcal{D}}{\to} (L^1,\ldots,L^G)
\end{equation}
as $n\to\infty$, where $L^1,\ldots,L^G$ are independent random variables with each $L^g$ being Poisson distributed with parameter $b_g\theta\tau$. Here $b_g=0$ means that $\PP(L^g=0)=1$. 
\end{theorem}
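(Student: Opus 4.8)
\emph{Proof idea.} The first step I would take is to recognise both original-scale counting measures as reparametrisations of the cluster processes from Section~\ref{sec:clusterpointmeasures}. Let $C_n^g$ be the $p$-convex set associated with $B_n^g$ via Assumption~\ref{ass:Cnassumption}, and set $\mathcal{A}_n^g=\bb{c_n^{-1}}C_n^g\subseteq C$. Since every candidate cluster point $\bb{t_{n,k_n}}z$ and every realised cluster point $\bb{c_n}x_i^{n,k_n}$ lies in $\Zd$, membership in $B_n^g=C_n^g\cap\Zd$ is the same as membership in $C_n^g$, and one reads off directly from the definitions that
\[
	L_n(B_n^g)=N_n(\mathcal{A}_n^g)
	\qquad\text{and}\qquad
	\tilde{L}_n(B_n^g)=\tilde{N}_n(\mathcal{A}_n^g).
\]
By Theorem~\ref{thm:maingeometrictheorem1}\ref{eq:geomthm1new} and $\abs{C}=1$ we have $\abs{\mathcal{A}_n^g}=\abs{C_n^g}/\prod_\ell c_{n,\ell}\sim\abs{B_n^g}/\abs{D_n}\to b_g$. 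Thus \eqref{fml:originalscaleresult} and \eqref{fml:originalscaleresulttilde} reduce to showing that $(N_n(\mathcal{A}_n^g))_{g}$ and $(\tilde{N}_n(\mathcal{A}_n^g))_{g}$ converge jointly to independent $\mathrm{Poisson}(b_g\theta\tau)$ variables. The essential difficulty is that the sets $\mathcal{A}_n^g$ \emph{move} with $n$ and need not converge to fixed subsets of $C$, so Theorems~\ref{thm:pointprocesstheorem} and~\ref{thm:pointprocesstheoremv2} do not apply to them directly.

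The plan is a sandwiching argument carried out at a fixed spatial resolution. Fix $M\in\NN$ and let $\mathcal{G}_M$ be the partition of $\Rd$ into half-open cubes of side $1/M$. For each $g$ I would use the inner and outer cube approximations
\[
	\mathcal{A}_n^{g,-}=\bigcup\{Q\in\mathcal{G}_M:Q\subseteq\mathcal{A}_n^g\}
	\subseteq\mathcal{A}_n^g\subseteq
	\mathcal{A}_n^{g,+}=\bigcup\{Q\in\mathcal{G}_M:Q\cap\mathcal{A}_n^g\neq\emptyset\}.
\]
Because $N_n$ and $\tilde{N}_n$ are monotone in the set, $N_n(\mathcal{A}_n^{g,-})\le N_n(\mathcal{A}_n^g)\le N_n(\mathcal{A}_n^{g,+})$ and similarly for $\tilde{N}_n$. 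For fixed $M$ there are only finitely many unions of cubes from $\mathcal{G}_M$ meeting $C$, so one may pass to a subsequence along which all the $\mathcal{A}_n^{g,\pm}$ equal fixed finite unions $\mathcal{A}^{g,\pm}_{(M)}$ of cubes; these are continuity sets of the diffuse limit $N$. Theorem~\ref{thm:pointprocesstheorem} then gives joint convergence of $(N_n(\mathcal{A}^{g,\pm}_{(M)}))_g$ to $\mathrm{Poisson}(\theta\tau\abs{\mathcal{A}^{g,\pm}_{(M)}})$ variables, and the corresponding statement for $\tilde{N}_n$ follows from Theorem~\ref{thm:pointprocesstheoremv2}; disjointness of the $B_n^g$ (hence essential disjointness of the $\mathcal{A}_n^g$, as any common volume would eventually contain lattice points shared by two $B_n^g$) makes the \emph{inner} limits independent across $g$.

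It then remains to close the sandwich as $M\to\infty$, and this is where the geometry enters. Applying Assumption~\ref{ass:Cnassumption} to $(B_n^g)$ together with \eqref{ass:boundedintvolumes} --- and using $b_g>0$, so that the scaling vector of $B_n^g$ is comparable to $\sca$ --- bounds the $(d-1)$st intrinsic volume of $\mathcal{A}_n^g=\bb{c_n^{-1}}C_n^g$ uniformly in $n$. The Steiner formula then produces a constant $K$, independent of $n$ and $M$, with
\[
	\abs{\mathcal{A}_n^{g,+}\setminus\mathcal{A}_n^{g,-}}
	\le\abs[\big]{(\partial\mathcal{A}_n^g)\oplus B(\sqrt{d}/M)}
	\le K/M.
\]
Hence $\abs{\mathcal{A}^{g,\pm}_{(M)}}\in[\,b_g-K/M,\;b_g+K/M\,]$ for all large $n$, so the Poisson means of the inner and outer bounds both tend to $b_g\theta\tau$ as $M\to\infty$. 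Sandwiching the joint distribution functions --- using the disjoint inner unions to obtain an independent product as a lower bound, and the outer unions, whose pairwise overlaps have measure $O(1/M)$, as an upper bound --- and letting first $n\to\infty$ along the stabilising subsequence and then $M\to\infty$, the continuity of $\mu\mapsto\PP(\mathrm{Poisson}(\mu)\le k)$ forces both bounds to the independent product $\prod_g\PP(\mathrm{Poisson}(b_g\theta\tau)\le k_g)$; as this limit does not depend on the subsequence, the full sequence converges, giving \eqref{fml:originalscaleresult}. The identical argument with $\tilde{N}_n$ and Theorem~\ref{thm:pointprocesstheoremv2} gives \eqref{fml:originalscaleresulttilde}. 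A coordinate with $b_g=0$ is trivial, since then $\EE L_n(B_n^g)\sim k_n\abs{\mathcal{A}_n^g}\PP(M_\xi(J_0^{n,k_n})>x_n)\to 0$, and likewise for $\tilde{L}_n$, so that component converges to $0$. The main obstacle is precisely this uniform control of the moving boundaries $\partial\mathcal{A}_n^g$: without the intrinsic-volume bound \eqref{ass:boundedintvolumes} the inner and outer approximations need not coalesce, which is exactly what prevents the result from being a formal corollary of the convergences $N_n\stackrel{vd}{\to}N$ and $\tilde{N}_n\stackrel{vd}{\to}N$.
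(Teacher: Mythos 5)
Your argument is correct and follows essentially the same route as the paper's proof: both sandwich the moving sets $B_n^g$ between inner and outer unions of grid cells that are fixed (after rescaling to $C$) at a given resolution, apply the Poisson convergence of Theorems~\ref{thm:pointprocesstheorem} and~\ref{thm:pointprocesstheoremv2} to those finitely many fixed sets, control the boundary discrepancy uniformly in $n$ via the intrinsic-volume/Steiner bounds stemming from Assumption~\ref{ass:Cnassumption}, and then let the resolution tend to infinity. The only differences are bookkeeping: the paper uses the $\sca$-adapted grid of Lemma~\ref{lem:geometrysup} with a tuning parameter $\lambda$ and takes a supremum over the finitely many possible box configurations, where you use a Euclidean grid of side $1/M$ on $C$ together with a subsequence-stabilization argument, and the paper sidesteps your essential-disjointness point by defining the inner approximations through lattice-point containment, which makes their disjointness automatic.
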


\begin{proof}
We demonstrate the proof of \eqref{fml:originalscaleresult}. The proof of \eqref{fml:originalscaleresulttilde} follows identically replacing $N_n$ and $L_n$ by $\tilde{N}_n$ and $\tilde{L}_n$, respectively. 

In addition to the usual set constructions, the proof applies the set construction from Section~\ref{appnA2} in Appendix~\ref{appnA},
which is similar to that of Section~\ref{sec:geometry} but with  $k\in\NN$ fixed and with another tuning parameter. We also refer to Section~\ref{appnA2} in Appendix~\ref{appnA} for the definition of $\tilde t_{n,k}^\ell$ and $\tilde I_z^{n,k}$ below.

We apply the construction to the set $D_n$ with tuning $\lambda \equiv 1$. That is, 
\[
	\tilde t_{n,k}^\ell = \frac{c_{n,\ell}}{k^{1/d}} 
\] 
for all $\ell$, and we thus divide $\Rd$, and thereby also $\sca C$, into the sets $\tilde I_z^{n,k}$ indexed by $z\in\Zd$. Moreover, we let the lattice points be $\tilde J_z^{n,k}=\tilde I_z^{n,k}\cap \Zd$. Now we see that
\[
	L_n( \tilde J_z^{n,k})=N_n\Big(\frac{1}{\sca}\tilde I_z^{n,k}\Big) ,
\]
where in fact
\[
	\frac{1}{\sca}\tilde I_z^{n,k} = z /\sqrt[d]{k} + 
	\bigl[0,1/\sqrt[d]{k} \,\bigr)^d
\]
is a fixed set independent of $n$ with volume $1/k$.
Thus, we immediately have from Theorem~\ref{thm:pointprocesstheorem} that for any finite fixed collection of distinct indices $\{z_1,\ldots,z_K\}$, it holds that
\begin{equation*}%\label{fml:distconvergenceJsets}
(L_n(\tilde J_{z_1}^{n,k}),\ldots,L_n(\tilde J_{z_K}^{n,k}))\stackrel{\mathcal{D}}{\to} (N_{z_1},\ldots,N_{z_K}),
\end{equation*}
where $N_{z_1},\ldots,N_{z_K}$ are independent and Poisson distributed each with parameter $\theta\tau/k$. In particular, this holds for the finite collection of $\tilde J_z^{n,k}$-sets that cover $D_n$ across all values of $n$; see Lemma~\ref{lem:geometrysup}\ref{eq:geomsup3} in Appendix~\ref{appnA}.

Now consider the sequences of discrete sets $(B_n^1)_{n\in\NN},\ldots,(B_n^G)_{n\in\NN}$. 
If $b_g>0$, the set-construction in this proof is identical to a set-construction based on the set $B_n^g$ using the scaling vectors $\bb{b_n^g} = \sca\, b_g^{1/d}$ and with a tuning parameter $\lambda = b_g$.
In line with previous notation, we let $B_{n,k}^{g,-}$ be the union of $\tilde J_z^{n,k}$-sets contained in $B_n^g$, and we let $p_{n,k}^g$ be the number of such sets. If $b_g=0$ for some $g$, then $p_{n,k}^g=0$ for $n$ large enough, and hence $B_{n,k}^{g,-}=\emptyset$. Note that $B_{n,k}^{1,-},\dots,B_{n,k}^{G,-}$ are disjoint. Let furthermore $B_{n,k}^+$ be the union of all $\tilde J_z^{n,k}$-sets that cover all $B_n^g$ jointly for $g=1,\ldots,G$. Let $q_{n,k}$ be the number of such sets in this union and note that
$
	p_{n,k}^1 + \dots + p_{n,k}^G \le q_{n,k} .
$
Define $p_k^g=\liminf_{n\to\infty}p_{n,k}^g$ and $q_k=\limsup_{n\to\infty}q_{n,k}$. Note, since all $p_{n,k}^g,q_{n,k}\in \NN$, that all $p_k^g\leq p^g_{n,k}$ and $q_k\geq q_{n,k}$ for $n$ large enough. From Lemma~\ref{lem:geometrysup}\ref{eq:geomsup2} in Appendix~\ref{appnA} we now have
\begin{equation}\label{fml:geometriccounting}
	p_k^g \sim b_g k\quad\text{ and }\quad r_k=o(k)
\end{equation}
as $k\to\infty$, where $r_k=q_k-(p_k^1+\dots+p_k^G)$. Choose for each $n$, $k$ and $g$ a set $B_{n,k}^{g,--}\subseteq B_{n,k}^{g,-}$ consisting of exactly $p_k^g$ sets of the type $\tilde J_z^{n,k}$. Let $R_{n,k}=B_{n,k}^+\setminus\bigcup_{g=1}^G B_{n,k}^{g,--}$ and note that $R_{n,k}$ is a union of at most $r_{k}$ sets of the type $\tilde J_z^{n,k}$ for $n$ large enough.

For any fixed selection $z_1,\dots,z_{r_{k}}$ we have that
\[
	\PP\Big(L_n\Big(\bigcup_{i=1}^{r_k}\tilde J_{z_i}^{n,k}\Big)=0\Big)\to\exp\Big(-r_k \frac{\theta\tau}{k}\Big),
\]
which will tend to 1 as $k\to\infty$. Also note that for a similar union of at most $r_k$ of the sets $\tilde J_z^{n,k}$, the probability will only be larger. Since, across all values of $n$, there are only finitely many selections of indices $z$ with $\tilde J_z^{n,k}$ intersecting $D_n$, the set $R_{n,k}$ can only consist of finitely many different selections of $\tilde J_z^{n,k}$-sets, each selection being of at most $r_k$ sets, across all values of $n$. Therefore also
\[
	\liminf_{n\to\infty} \PP\bigl(L_n(R_{n,k})=0 \bigr)\to 1
\]
as $k\to\infty$.
Now let $\ell_1,\ldots,\ell_G\in\NN_0$ and $\epsilon>0$ be given and choose $k\in\NN$ large enough such that
\[
	\liminf_{n\to\infty}\PP \bigl(L_n(R_{n,k})=0 \bigr)>1-\epsilon .
\]
Then 
\begin{align*}
	\MoveEqLeft	
	\limsup_{n\to\infty}\PP\bigl(L_n(B_n^1)=\ell_1,\ldots,L_n(B_n^G)=\ell_G\bigr) \\ &
	\le \limsup_{n\to\infty}\PP\bigl(L_n(B_n^1)=\ell_1,\ldots,L_n(B_n^G)=\ell_G,L_n(R_{n,k})=0\bigr)+\epsilon\\&
	= \limsup_{n\to\infty}\PP\bigl(L_n(B_{n,k}^{1,--})=\ell_1,\ldots,L_n(B_{n,k}^{G,--})=\ell_G,L_n(R_{n,k})=0\bigr)+\epsilon\\&
	\le \limsup_{n\to\infty}\PP\bigl(L_n(B_{n,k}^{1,--})=\ell_1,\ldots,L_n(B_{n,k}^{G,--})=\ell_G\bigr)+\epsilon\\&
	\le \limsup_{n\to\infty}\sup \PP\bigl(L_n(V^1_{n,k})=\ell_1,\ldots,L_n(V^G_{n,k})=\ell_G\bigr)+\epsilon\\&
	=\PP(L_{1,k}^-=\ell_1) \cdots \PP(L_{G,k}^-=\ell_G)+\epsilon,
\end{align*}
where $\sup$ in the fifth line is over all choices of disjoint sets $V^1_{n,k},\ldots,V^G_{n,k}$ such that each $V^g_{n,k}$ consists of $p_k^g$ (with fixed indices across varying $n$) sets $\tilde J_z^{n,k}$. Note that this is a finite supremum. In the sixth line each $L_{g,k}^-$ is Poisson distributed with parameter $p_k^g \theta\tau/k$. By similar considerations it is obtained that 
\begin{align*}
	\MoveEqLeft
	\liminf_{n\to\infty}\PP \bigl(L_n(B_n^1)=\ell_1,\ldots,L_n(B_n^G)=\ell_G \bigr)\\&
	\ge \PP(L_{1,k}^-=\ell_1)\cdots \PP(L_{G,k}^-=\ell_G)-\epsilon.
\end{align*}
Letting $k\to\infty$ now gives the desired result since each $L_{g,k}^-\stackrel{\mathcal{D}}{\to} L^g$ due to \eqref{fml:geometriccounting}. 
\end{proof}

%----------------------------
	\section{The special case of $\theta=1$}
	\label{sec:noclustering}
%----------------------------

In this section we again consider the sequence $(D_n)$ of discrete index sets obtained as 
\[
	D_n = (\sca C)\cap \Zd
\]
for some $p$-convex set $C$ with volume 1. 
Moreover, we restrict attention to the case $\theta=1$ thus allowing no clustering. We formulate all results under the assumption of the condition $\mathcal D^\ell(x_n;k_n;1)$ but recall from Lemma~\ref{lem:conditionrelation}\ref{eq:conditionrelation2} that it is equivalent to the assumption that $\mathcal D^{(m)}(x_n;k_n)$ and \eqref{eq:thetacondition2} are satisfied with $\theta=1$ for some (equivalently all) $m\in\NN_0$. 

We consider the classical point process of exceedances given by
\[
	\NNN_n(A) = \sum_{z\in\Zd}
	\I{A}\Bigl(\frac{z}{\sca}\Bigr)\I{\{\xi_z>x_n\}}
\]
for all $A\in \Bb(C)$. Not surprisingly, this converges similarly as the point process $N_n$ in Section~\ref{sec:clusterpointmeasures} though without clustering in the limiting process. The result follows by similar (in fact simpler) arguments as Theorem~\ref{thm:pointprocesstheorem}.

\begin{theorem}\label{thm:pointprocessexceedances}
Let $(D_n)_{n\in\NN}$ be defined as above, and let $(\xi_v)_\vZ$ be a stationary field satisfying $\mathcal{D}(x_n;K_n)$ for some sequence $(x_n)_{n\in\NN}$. Moreover, let $k_n$ be a sequence of integers satisfying $k_n\to\infty$, $k_n \alpha_n \to 0$ and $k_n^{1/d} \bb{\gamma_n} = o(\sca)$ as $n\to\infty$, such that $\mathcal{D}^\ell(x_n;k_n; 1)$ is satisfied. If for some $ 0 < \tau < \infty$,
\[
	\abs{D_n} \PP(\xi_0>x_n) \to \tau 
\]
then $\NNN_n\stackrel{vd}{\to} \NNN$, where $\NNN$ is a homogeneous Poisson process on $C$ with intensity $\tau$.
\end{theorem}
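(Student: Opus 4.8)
The plan is to follow the two-step reduction used in the proof of Theorem~\ref{thm:pointprocesstheorem}, invoking \cite[Theorem~4.18]{Kallenberg2017}: it suffices to prove that $\EE \NNN_n(A) \to \EE \NNN(A) = \tau\abs{A}$ for every box $A = \bigtimes_{j=1}^d (a_j,b_j]$, and that $\PP(\NNN_n(B)=0) \to \PP(\NNN(B)=0) = \exp(-\tau\abs{B})$ for every finite union $B$ of such boxes. Both verifications turn out to be more direct than their counterparts for $N_n$, precisely because $\NNN_n$ records individual exceedances rather than clusters.

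For the intensity, I would compute directly
\[
	\EE \NNN_n(A) = \PP(\xi_0 > x_n)\sum_{z\in\Zd}\I{A}\bigl(z/\sca\bigr) = \PP(\xi_0>x_n)\,\abs{(\sca A)\cap\Zd} ,
\]
and then use $\abs{(\sca A)\cap \Zd}\sim \abs{\sca A} = \abs{A}\prod_{\ell=1}^d c_{n,\ell}\sim \abs{A}\abs{D_n}$, which follows from the lattice-point count underlying Theorem~\ref{thm:maingeometrictheorem1}\ref{eq:geomthm1new} (applied to the box $\sca A$) together with $\abs{C}=1$. Combined with the tail assumption $\abs{D_n}\PP(\xi_0>x_n)\to\tau$ this gives $\EE\NNN_n(A)\to \abs{A}\tau$. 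Note that, unlike for $N_n$, this step uses only the tail convergence and none of the maximum representations of Section~\ref{sec:independenceapproximation}.

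The void probabilities are where the exceedance process is genuinely simpler. The key observation is the exact identity
\[
	\{\NNN_n(B)=0\} = \bigl\{M_\xi\bigl((\sca B)\cap\Zd\bigr)\le x_n\bigr\} ,
\]
since the indices contributing to $\NNN_n(B)$ are exactly those $z\in\Zd$ with $z/\sca\in B$, i.e.\ $z\in(\sca B)\cap\Zd$. In contrast to the cluster measure $N_n$, there is therefore no need for the inner/outer box sandwich $B_n^-\subseteq B_n\subseteq B_n^+$: the void event is literally a max-below-threshold event. I would then apply Theorem~\ref{thm:maximumtheorem} with $\theta=1$ to the sequence $((\sca B)\cap\Zd)_{n\in\NN}$, which satisfies Assumption~\ref{ass:Cnassumption} with a scaling vector proportional to $\sca$ (as $B$ is a fixed, bounded finite union of boxes) and obeys $\abs{(\sca B)\cap\Zd}\PP(\xi_0>x_n)\to \abs{B}\tau$, to conclude $\PP(M_\xi((\sca B)\cap\Zd)\le x_n)\to \exp(-\abs{B}\tau) = \PP(\NNN(B)=0)$.

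The main obstacle is the routine but careful verification that the subset sequence $((\sca B)\cap\Zd)$ inherits the hypotheses of Theorem~\ref{thm:maximumtheorem}: one must check that the mixing condition $\mathcal{D}(x_n;K_n)$ and the local condition $\mathcal{D}^\ell(x_n;k_n;1)$, stated for $(D_n)$ with scaling vector $\sca$, remain valid for $(\sca B)\cap\Zd$ with associated scaling vector $\abs{B}^{1/d}\sca$, and that the growth requirements $k_n\alpha_n\to 0$ and $k_n^{1/d}\bb{\gamma_n}=o(\sca)$ are unaffected by this fixed coordinate-wise rescaling. This is exactly the verification already carried out for $B_n^{\pm}\cap\Zd$ in the proof of Theorem~\ref{thm:pointprocesstheorem}, and for $\theta=1$ it is most transparent through the equivalent formulation in Lemma~\ref{lem:conditionrelation}\ref{eq:conditionrelation2}, where the local behaviour is controlled via the fixed box $A_0^{(m)}$ and the anti-clustering condition rather than through the $\sca$-dependent box $A_0^{n,k_n}$.
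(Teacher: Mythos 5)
Your proposal is correct and is essentially the argument the paper has in mind: it invokes the same Kallenberg criterion and the same intensity/void-probability reduction as in the proof of Theorem~\ref{thm:pointprocesstheorem}, with the genuine simplification (which the paper alludes to by calling the argument ``in fact simpler'') that $\{\NNN_n(B)=0\}$ is exactly the event $\{M_\xi((\sca B)\cap\Zd)\le x_n\}$, so no inner/outer sandwich $B_n^-\subseteq B_n\subseteq B_n^+$ is needed before applying Theorem~\ref{thm:maximumtheorem} with $\theta=1$. Your closing remark correctly identifies the only point requiring care --- that $((\sca B)\cap\Zd)$ inherits Assumption~\ref{ass:Cnassumption} and the conditions $\mathcal{D}$ and $\mathcal{D}^\ell$ --- and since $\abs{B}\le\abs{C}=1$ the relevant boxes for the subset sequence are contained in those for $(D_n)$, so for $\theta=1$ this verification is immediate by monotonicity.
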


This result immediately gives the asymptotic behavior of upper order statistics of the field $(\xi_v)_\vZ$ as formulated in the corollary below. In the corollary, we let
\[
	\xi_{(1)}^n \ge \xi_{(2)}^n\ge \dots \ge  \xi_{(\abs{D_n})}^n 
	,\qquad n\in\NN,
\] 
be the ordered sample of $(\xi_v)_{v\in D_n}$ for all $n\in\NN$. In particular, $M_\xi(D_n) = \xi_{(1)}^n$.

\begin{corollary}
Let $(D_n)_{n\in\NN}$ be defined as above, and let $(\xi_v)_\vZ$ be a stationary field satisfying $\mathcal{D}(x_n;K_n)$ for some sequence $(x_n)_{n\in\NN}$. Moreover, let $k_n$ be a sequence of integers satisfying $k_n\to\infty$, $k_n \alpha_n \to 0$ and $k_n^{1/d} \bb{\gamma_n} = o(\sca)$ as $n\to\infty$, such that $\mathcal{D}^\ell(x_n;k_n; 1)$ is satisfied. If for some $ 0 < \tau < \infty$,
\[
	\abs{D_n} \PP(\xi_0>x_n) \to \tau ,
\]
then, for all $k$,
\[
	\PP\bigl(\xi_{(k)}^n \le x_n \bigr)
	\to \exp(-\tau) \sum_{j=0}^{k-1} \frac{\tau^j}{j!}
\]
as $n\to\infty$.
\end{corollary}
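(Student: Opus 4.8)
The plan is to identify the event that controls the $k$-th upper order statistic with a level set of the total mass of the exceedance process $\NNN_n$ on $C$, and then read off its limiting distribution from Theorem~\ref{thm:pointprocessexceedances}. First I would observe that, by the very definition of $\NNN_n$, its total mass is
\[
	\NNN_n(C)=\sum_{z\in\Zd}\I{C}\Bigl(\frac{z}{\sca}\Bigr)\I{\{\xi_z>x_n\}}
	=\abs[\big]{\{v\in D_n\::\:\xi_v>x_n\}},
\]
since $z/\sca\in C$ together with $z\in\Zd$ is equivalent to $z\in(\sca C)\cap\Zd=D_n$. Hence $\NNN_n(C)$ is precisely the number of exceedances of $x_n$ among $(\xi_v)_{v\in D_n}$. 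Because the $k$-th largest of these variables fails to exceed $x_n$ exactly when fewer than $k$ of them do, we obtain for every $k$ the exact identity
\[
	\bigl\{\xi_{(k)}^n\le x_n\bigr\}=\bigl\{\NNN_n(C)\le k-1\bigr\}.
\]

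Next I would transfer the distributional convergence of Theorem~\ref{thm:pointprocessexceedances} to this single scalar statistic. Since $C$ is a finite union of convex bodies, it is compact and its topological boundary is contained in the union of the (Lebesgue-null) boundaries of those bodies, so $\abs{\partial C}=0$. Taking $A_1=C$ in the equivalent finite-dimensional formulation of vague convergence stated in Section~\ref{sec:clusterpointmeasures} therefore yields $\NNN_n(C)\stackrel{\mathcal{D}}{\to}\NNN(C)$. As $\NNN$ is homogeneous Poisson with intensity $\tau$ and $\abs{C}=1$, the limit $\NNN(C)$ is Poisson distributed with parameter $\tau$.

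Finally I would pass the convergence through the integer threshold $k-1$. Both $\NNN_n(C)$ and $\NNN(C)$ are $\NN_0$-valued, and $k-\tfrac12$ is a continuity point of the Poisson$(\tau)$ distribution function, so that
\[
	\PP\bigl(\xi_{(k)}^n\le x_n\bigr)
	=\PP\bigl(\NNN_n(C)\le k-1\bigr)
	\to\PP\bigl(\NNN(C)\le k-1\bigr)
	=e^{-\tau}\sum_{j=0}^{k-1}\frac{\tau^j}{j!},
\]
which is the asserted limit (and for $k=1$ recovers $\PP(M_\xi(D_n)\le x_n)\to e^{-\tau}$, consistent with Theorem~\ref{thm:maximumtheorem} at $\theta=1$). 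The argument is essentially routine once $\NNN_n(C)$ is identified with the exceedance count; the only points requiring a moment's care are the admissibility of $A=C$ as a test set, which rests on $\abs{\partial C}=0$, and the harmlessness of the integer threshold, which rests on the limiting law placing no mass strictly between consecutive integers. I do not expect a genuine obstacle here.
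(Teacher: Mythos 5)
Your proof is correct and follows exactly the paper's route: the paper likewise identifies $\{\xi_{(k)}^n \le x_n\}$ with $\{\NNN_n(C)\le k-1\}$ and invokes Theorem~\ref{thm:pointprocessexceedances}. You simply spell out the details the paper leaves implicit (the admissibility of $C$ as a test set via $\abs{\partial C}=0$ and the passage through the integer threshold), all of which are accurate.
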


\begin{proof}
The claims follows directly from Theorem~\ref{thm:pointprocessexceedances} when realizing that
\[
	\PP\bigl(\xi_{(k)}^n \le x_n \bigr)
	= \PP \bigl( \NNN_n(C) \le k-1 \bigr)
\]
for all $n\in\NN$.
\end{proof}

Naturally, a version of Theorem~\ref{thm:originalscale} for an exceedance point process on the original index set $\Zd$ also holds in the present case of $\theta=1$. To this end define the point process
\[
	\overline L_n(A) = \sum_{z \in\Zd} \I{A \cap D_n} (z) \I{\{ \xi_z>x_n\}}
\]
for all $A\subseteq \Zd$. The result below now follows exactly as Theorem~\ref{thm:originalscale} replacing $N_n$ and $L_n$ by $\overline N_n$ and $\overline L_n$, respectively.

\begin{theorem}\label{thm:originalscaletheta1}
Let $(\xi_v)_\vZ$ be a stationary field satisfying $\mathcal{D}(x_n;K_n)$ for some sequence $(x_n)_{n\in\NN}$. Moreover, let $k_n$ be a sequence of integers satisfying $k_n\to\infty$, $k_n \alpha_n \to 0$ and $k_n^{1/d} \bb{\gamma_n} = o(\sca)$ as $n\to\infty$, such that $\mathcal{D}^\ell(x_n;k_n; 1)$ is satisfied.

Let $(B_n^1)_{n\in\NN},\dots,(B_n^G)_{n\in\NN}$ be sequences of sets in $\Zd$, each satisfying Assumption~\ref{ass:Cnassumption}, such that $B_n^g\subseteq D_n$ for all $n$ and $g$, and $B_n^1,\ldots,B_n^G$ are pairwise disjoint. Assume furthermore that
\[
	\lim_{n\to\infty}\frac{\abs{B_n^g}}{\abs{D_n}}=b_g,
\]
for each $g=1,\ldots,G$, where $0\leq b_g<\infty$. If for some $ 0 < \tau < \infty$,
\[
	\abs{D_n} \PP(\xi_0>x_n) \to \tau
\]
then 
\begin{equation*}%\label{fml:originalscaleresult}
	(L_n(B_n^1),\ldots,L_n(B_n^G))\stackrel{\mathcal{D}}{\to} (\overline L^1,\ldots,\overline L^G)
\end{equation*}
as $n\to\infty$, where $\overline L^1,\ldots,\overline L^G$ are independent random variables with each $\overline L^g$ being Poisson distributed with parameter $b_g\tau$. Here $b_g=0$ means that $\PP(\overline L^g=0)=1$. 
\end{theorem}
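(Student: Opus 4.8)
The plan is to mirror the proof of Theorem~\ref{thm:originalscale} almost verbatim, replacing the cluster process $N_n$ and its original-scale transform $L_n$ by the exceedance process $\NNN_n$ and $\overline L_n$, and invoking Theorem~\ref{thm:pointprocessexceedances} (which gives intensity $\tau$, since here $\theta=1$) in place of Theorem~\ref{thm:pointprocesstheorem}. Concretely, I would first apply the auxiliary geometric construction of Section~\ref{appnA2} to $D_n=(\sca C)\cap\Zd$ with tuning $\lambda\equiv 1$, so that $\tilde t_{n,k}^\ell = c_{n,\ell}/k^{1/d}$ and $\Rd$ is partitioned into the boxes $\tilde I_z^{n,k}$ with lattice points $\tilde J_z^{n,k}=\tilde I_z^{n,k}\cap\Zd$. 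The crucial identity is now
\[
	\overline L_n(\tilde J_z^{n,k}) = \NNN_n\Big(\tfrac{1}{\sca}\tilde I_z^{n,k}\Big),
\]
which holds because both sides simply count the indices $v$ with $\xi_v>x_n$ lying in the box $\tilde I_z^{n,k}$; here $\tfrac{1}{\sca}\tilde I_z^{n,k}$ is a fixed set of volume $1/k$ independent of $n$. This identity is in fact cleaner than its cluster counterpart, since the exceedance process places each point at its own location rather than at a box corner.

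From Theorem~\ref{thm:pointprocessexceedances} and the vague convergence $\NNN_n\stackrel{vd}{\to}\NNN$ it then follows that, for any finite collection of distinct indices $z_1,\dots,z_K$,
\[
	\bigl(\overline L_n(\tilde J_{z_1}^{n,k}),\dots,\overline L_n(\tilde J_{z_K}^{n,k})\bigr)
	\stackrel{\mathcal D}{\to}
	(\NNN_{z_1},\dots,\NNN_{z_K}),
\]
where the limits are independent and Poisson distributed each with parameter $\tau/k$. As in the proof of Theorem~\ref{thm:originalscale}, this holds in particular for the finite family of $\tilde J_z^{n,k}$-boxes that cover $D_n$ uniformly across $n$, guaranteed by Lemma~\ref{lem:geometrysup}\ref{eq:geomsup3}.

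The remaining---and only substantial---work is the geometric sandwiching of the general sets $B_n^g$. I would approximate each $B_n^g$ from inside by a union $B_{n,k}^{g,--}$ of exactly $p_k^g$ boxes $\tilde J_z^{n,k}$, collect the leftover boxes covering $\bigcup_g B_n^g$ into a remainder $R_{n,k}$ consisting of at most $r_k$ boxes, and invoke the geometric counting estimates $p_k^g\sim b_g k$ and $r_k=o(k)$ from Lemma~\ref{lem:geometrysup}\ref{eq:geomsup2} (equation~\eqref{fml:geometriccounting}). Since a union of at most $r_k$ fine boxes carries no exceedance with probability tending to $\exp(-r_k\tau/k)\to 1$ as $k\to\infty$, and since across all $n$ only finitely many such selections occur, one gets $\liminf_{n\to\infty}\PP(\overline L_n(R_{n,k})=0)\to 1$. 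Sandwiching the joint probability $\PP(\overline L_n(B_n^1)=\ell_1,\dots,\overline L_n(B_n^G)=\ell_G)$ between expressions built from the inner boxes and passing first to $n\to\infty$ and then to $k\to\infty$ yields the product of independent $\mathrm{Poisson}(b_g\tau)$ probabilities, exactly as in Theorem~\ref{thm:originalscale}.

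The main obstacle is not any new estimate but the careful interchange of the two limits: one must pass $n\to\infty$ with $k$ fixed to obtain the finite-dimensional Poisson limits on the fine grid, and only afterwards let $k\to\infty$ to make the geometric approximation exact. The device that legitimizes this is that, ranging over all $n$, the boxes $\tilde J_z^{n,k}$ meeting $D_n$ come from only finitely many index selections, so the $\limsup$ over $n$ can be dominated by a finite supremum of joint exceedance probabilities over disjoint families of $p_k^g$ boxes. I would finally verify that nothing in the argument used the cluster-specific placement of points, confirming that replacing Theorem~\ref{thm:pointprocesstheorem} by Theorem~\ref{thm:pointprocessexceedances} and setting $\theta=1$ is the only change required.
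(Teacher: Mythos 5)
Your proposal is correct and is essentially identical to the paper's own treatment: the paper simply states that the result follows exactly as Theorem~\ref{thm:originalscale} upon replacing $N_n$ and $L_n$ by $\overline N_n$ and $\overline L_n$ (and invoking Theorem~\ref{thm:pointprocessexceedances} with intensity $\tau$), which is precisely the substitution and sandwiching argument you spell out.
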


%%%%%%%%%%%%%%%%%%%%%%%%%%%%%%%%%%%%%%%%%%%%%%
%% Single Appendix:                         %%
%%%%%%%%%%%%%%%%%%%%%%%%%%%%%%%%%%%%%%%%%%%%%%
\begin{appendix}
\section{Geometric results and proofs}\label{appnA}
\begin{lemma}\label{lem:applemma1}
Let $B\subseteq \Zd$ be a finite set, and let $(D_n)$ be an increasing sequence of sets satisfying Assumption~\ref{ass:Cnassumption}. Then
\[
	\frac{\abs{D_n \oplus B}}{\abs{D_n}} \to 1
\]
as $n\to\infty$.
\end{lemma}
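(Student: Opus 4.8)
The plan is to sandwich $\abs{D_n\oplus B}$ between $\abs{D_n}$ and $\abs{C_n}(1+o(1))$ and then invoke $\abs{D_n}\sim\abs{C_n}$ from Theorem~\ref{thm:maingeometrictheorem1}\ref{eq:geomthm1new}. Assuming $B\neq\emptyset$ (otherwise there is nothing to prove), the lower bound is immediate: fixing any $b_0\in B$, the translate $D_n+b_0\subseteq D_n\oplus B$ has the same cardinality as $D_n$, so $\abs{D_n\oplus B}\ge\abs{D_n}$. The whole work is therefore in the matching upper bound, which says that enlarging $D_n$ by the finite set $B$ only adds a boundary layer of lower-order size.

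For the upper bound I set $\rho=\max_{b\in B}\abs{b}$, so that $B\subseteq B(\rho)$ and hence $D_n\oplus B\subseteq(C_n\oplus B(\rho))\cap\Zd$. First I would pass from this lattice count to a continuous volume: associating to each lattice point $z\in(C_n\oplus B(\rho))\cap\Zd$ the half-open cube $z+[-1/2,1/2)^d$ gives disjoint sets of volume $1$, each contained in $C_n\oplus B(\rho)\oplus[-1/2,1/2]^d\subseteq C_n\oplus B(\rho')$ with $\rho'=\rho+\sqrt{d}/2$. Summing volumes yields $\abs{(C_n\oplus B(\rho))\cap\Zd}\le\abs{C_n\oplus B(\rho')}$, so it suffices to show that $\abs{C_n\oplus B(\rho')}-\abs{C_n}=o(\abs{C_n})$.

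To estimate this added volume I would localize to the convex pieces: any point of $(C_n\oplus B(\rho'))\setminus C_n$ lies outside every $C_{n,i}$ yet inside some $C_{n,i}\oplus B(\rho')$, so it belongs to some $(C_{n,i}\oplus B(\rho'))\setminus C_{n,i}$; thus $\abs{(C_n\oplus B(\rho'))\setminus C_n}\le\sum_{i=1}^p[\abs{C_{n,i}\oplus B(\rho')}-\abs{C_{n,i}}]$, and the Steiner formula rewrites each summand as $\sum_{j=0}^{d-1}\rho'^{\,d-j}\kappa_{d-j}V_j(C_{n,i})$ (with $\kappa_m$ the volume of the unit ball in $\RR^m$). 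The decisive step is to control $V_j(C_{n,i})$ for $j\le d-1$, and here I would use only monotonicity: by \eqref{eq:geoassumption1} we have $C_{n,i}=\bb{c_n}(\bb{c_n^{-1}}C_{n,i})\subseteq\bb{c_n}[-c,c]^d$, so $V_j(C_{n,i})\le V_j(\bb{c_n}[-c,c]^d)=(2c)^j e_j(c_{n,1},\dots,c_{n,d})$, where $e_j$ is the $j$-th elementary symmetric polynomial (the intrinsic volumes of a box). Since $j\le d-1$, every monomial of $e_j$ omits at least one factor, so $e_j(c_{n,1},\dots,c_{n,d})\le\binom{d}{j}\prod_{\ell}c_{n,\ell}/\min_\ell c_{n,\ell}$.

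Finally, because $\min_\ell c_{n,\ell}\to\infty$ by Assumption~\ref{ass:Cnassumption}\ref{item:cn1} while $\prod_\ell c_{n,\ell}\sim\abs{C_n}$ by \ref{item:cn2}, the contributions with $1\le j\le d-1$ are of order $\prod_\ell c_{n,\ell}/\min_\ell c_{n,\ell}=o(\abs{C_n})$, and the $j=0$ term is the constant $p\,\rho'^{\,d}\kappa_d=o(\abs{C_n})$; summing gives $\abs{C_n\oplus B(\rho')}-\abs{C_n}=o(\abs{C_n})$. Combining the two bounds yields $\abs{D_n}\le\abs{D_n\oplus B}\le\abs{C_n}(1+o(1))$, and dividing by $\abs{D_n}\sim\abs{C_n}$ gives the claim. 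The one genuine obstacle is the anisotropy in the third step: one must resist applying the isotropic homogeneity $V_j(\gamma\,\cdot\,)=\gamma^j V_j(\cdot)$ to the diagonal map $\bb{c_n}$ (which is false in general) and instead note that bounding against the surrounding box, combined with the sub-maximal degree $j\le d-1$, automatically produces the saving factor $1/\min_\ell c_{n,\ell}$ that forces the boundary layer to be negligible.
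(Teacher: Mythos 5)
Your proof is correct, and it reaches the conclusion by a genuinely more self-contained route than the paper's. Both arguments share the same skeleton — the translate lower bound $\abs{D_n\oplus B}\ge\abs{D_n}$, an upper bound showing the layer added by $B$ has volume $o(\abs{C_n})$, and a final appeal to $\abs{D_n}\sim\abs{C_n}$ from Theorem~\ref{thm:maingeometrictheorem1}\ref{eq:geomthm1new} — but the two middles differ. The paper rescales everything by $\bb{c_n^{-1}}$, bounds the boundary tube $\abs{\partial(\bb{c_n^{-1}}C_{n,i})\oplus B(R/c_{n,*})}$ by citing a Steiner-type corollary from \cite{StehrRonnNielsen2020}, and feeds in the boundedness \eqref{ass:boundedintvolumes} of the rescaled intrinsic volumes; it then disposes of the lattice-versus-volume issue by observing that $(C_n\oplus B(R))\cap\Zd$ itself satisfies Assumption~\ref{ass:Cnassumption} and reusing Theorem~\ref{thm:maingeometrictheorem1}\ref{eq:geomthm1new} a second time. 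You instead stay on the original scale, apply the plain Steiner formula to $\abs{C_{n,i}\oplus B(\rho')}-\abs{C_{n,i}}$, and control $V_j(C_{n,i})$ by monotonicity against the surrounding box $\bb{c_n}[-c,c]^d$ from \eqref{eq:geoassumption1}, where the elementary symmetric polynomial computation makes the anisotropic saving factor $1/\min_\ell c_{n,\ell}$ completely explicit; your unit-cube packing handles the lattice count directly without re-invoking the geometric theorem. What your version buys is independence from the external corollary and from a second application of Theorem~\ref{thm:maingeometrictheorem1}, plus a transparent account of why $j\le d-1$ forces the boundary layer to be negligible even when the $c_{n,\ell}$ grow at very different rates; what the paper's version buys is economy, since it reuses machinery already needed elsewhere (notably the same boundary-tube estimate appears in the proof of Theorem~\ref{thm:maingeometrictheorem1} itself). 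Your closing remark about not applying isotropic homogeneity to the diagonal map $\bb{c_n}$ is exactly the right caution, and the only (cosmetic) point worth adding is that the statement implicitly assumes $B\neq\emptyset$, which you correctly flag.
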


\begin{proof}
Let $C_n=\cup C_{n,i}$ be the continuous $p$-convex set associated to $D_n=C_n\cap \Zd$ by Assumption~\ref{ass:Cnassumption},
and let $\bb{c_n}$ be the $d$-dimensional scaling vector given in the assumption. Define $c_{n,*} = \min\{c_{n,\ell}\::\: \ell = 1,\dots,d\}$ for all $n$, which satisfies $c_{n,*} \to \infty$.  Let $R>0$ be given such that $B \subseteq B(R)$, that is, the closed ball of radius $R$.
Appealing to \cite[Corollary~1]{StehrRonnNielsen2020} (which was also used in the proof of Theorem~\ref{thm:maingeometrictheorem1}), and using that $\bb{c_n^{-1}} B(R) \subseteq B(R/c_{n,*})$ for all $n$, we find that
\begin{align*}
	\abs{\bb{c_n^{-1}}(\partial C_n \oplus B(R))} &
	\le \abs{\partial (\bb{c_n^{-1}}C_n) \oplus B(R/c_{n,*})} \\ &
	\le \sum_{i=1}^p \abs{\partial (\bb{c_n^{-1}}C_{n,i}) \oplus B(R/c_{n,*})} \\&
	\le 2\sum_{j=1}^{d-1} \omega_{d-j} \frac{R^{d-j}}{c_{n,*}^{d-j}}\sum_{i=1}^p V_j(\bb{c_n^{-1}}C_{n,i}) \\ &
	\to 0
\end{align*}
as $n\to\infty$, where the convergence follows from \eqref{ass:boundedintvolumes}. Using the fact that $\bb{c_n}$ satisfies $c_{n,1}\cdots c_{n,d} \sim \abs{C_n}$ as $n\to\infty$, we conclude that, 
\[
\begin{aligned}
	1 \le \frac{\abs{C_n \oplus B(R)}}{\abs{C_n}} &
	\le 1 + \frac{\abs{\partial C_n \oplus B(R)}}{\abs{C_n}} \\& 
	\sim 1 + \frac{\abs{\partial C_n \oplus B(R)}}{\prod c_{n,\ell}} \\ &
	= 1 + \abs{\bb{c_n^{-1}}(\partial C_n \oplus B(R))} \\ &
	\to 1
\end{aligned}
\] 
as $n\to\infty$. It can be seen that also $(C_n \oplus B(R)) \cap \Zd$ satisfies Assumption~\ref{ass:Cnassumption}, and therefore Theorem~\ref{thm:maingeometrictheorem1}\ref{eq:geomthm1new} applies to this set also. We therefore obtain that
\[
	1 \le \frac{\abs{D_n \oplus B}}{\abs{D_n}}
	\le \frac{\abs{(C_n \oplus B(R)) \cap \Zd}}{\abs{D_n}}
	\sim 
	\frac{\abs{C_n \oplus B(R)}}{\abs{C_n}}
	\to 1
\]
as $n\to\infty$. This concludes the proof.
\end{proof}

\subsection{Proof of Theorem~\ref{thm:maingeometrictheorem1}}\label{appnA1}

%\begin{proof}[Proof of Theorem~\ref{thm:maingeometrictheorem1}]
Throughout the proof we will use the boundedness \eqref{ass:boundedintvolumes} of the intrinsic volumes implied by Assumption~\ref{ass:Cnassumption}.

We start by demonstrating statement \ref{eq:geomthm2new}. This is more easily managed when considering all relevant sets scaled by $\bb{c_n^{-1}}$. In particular we will use that, in this scaling, the number of sets $\bb{c_n^{-1}} I_z^{n,k_n}$ contained in $\bb{c_n^{-1}}C_n$ and intersecting $\bb{c_n^{-1}} C_n$ by construction equals $p_{n,k_n}$ and $q_{n,k_n}$, respectively, from the original scaling.

Utilizing \cite[Corollary~1]{StehrRonnNielsen2020}, we find for any $0<r$ that
\begin{equation}\label{fml:tnklimit1}
\begin{aligned}
	\abs{\partial (\bb{c_n^{-1}}C_n)\oplus B(r\,k_n^{-1/d})} &
	\leq \sum_{i=1}^p\abs{\partial (\bb{c_n^{-1}}C_{n,i})\oplus B(r\,k_n^{-1/d})} \\&
	\leq 2\sum_{j=0}^{d-1} \omega_{d-j}\left(\sum_{i=1}^pV_{j}(\bb{c_n^{-1}} C_{n,i})\right) \frac{r^j}{k_n^{j/d}} \\ &
	\to 0
\end{aligned} 
\end{equation}
as $n\to\infty$, where $\partial$ denotes the boundary of the given set, and $\omega_j$ is the volume of the $j$-dimensional unit ball. To obtain the convergence, we used \eqref{ass:boundedintvolumes} and $k_n\to\infty$.
If $\bb{c_n^{-1}} I_z^{n,k_n}\cap \partial \bb{c_n^{-1}}C_n\neq \emptyset$ then necessarily 
$\bb{c_n^{-1}} I_z^{n,k_n}\subseteq \partial (\bb{c_n^{-1}}C_n)\oplus B\bigl(2\sqrt{d}\,k_n^{-1/d}\bigr)$ for sufficiently large $n$.
Using that
\[
	\abs{\bb{c_n^{-1}} I_z^{n,k_n}}
	=\prod_{\ell=1}^d \frac{t_{n,k_n}^\ell}{c_{n,\ell}}
	\sim \frac{1}{k_n} ,
\]
we then find, asymptotically,
\[
	\frac{q_{n,k_n}-p_{n,k_n}}{k_n}
	\le 
	\abs[\Big]{
	\bigcup_{z}
	\bb{c_n^{-1}} I_z^{n,k_n} }
	\le
	\abs*{\partial (\bb{c_n^{-1}}C_n)\oplus B\bigl(2\sqrt{d}\,k_n^{-1/d}\bigr)},
\] 
where the union is over all $z\in\Zd$ satisfying $\bb{c_n^{-1}} I_z^{n,k_n}\subseteq \partial (\bb{c_n^{-1}}C_n)\oplus B\bigl(2\sqrt{d}\,k_n^{-1/d}\bigr)$.
Together with \eqref{fml:tnklimit1} and the fact that $\limsup p_{n,k_n}/k_n \le 1 \le \liminf q_{n,k_n}/k_n$, this gives the statement in \ref{eq:geomthm2new}.

For statement \ref{eq:geomthm1new} we return to the original scaling of sets. We use that $\abs{\dnm}=p_{n,k_n} \prod t_{n,k_n}^\ell$ and $\abs{\dnp}=q_{n,k_n}\prod t_{n,k_n}^\ell$ together with \eqref{fml:Cnplusminus1} to find
\[
	\frac{p_{n,k_n}\prod t_{n,k_n}^\ell}{\abs{C_n}}
	\le 
	\frac{\abs{D_n}}{\abs{C_n}}
	\le 
	\frac{q_{n,k_n}\prod t_{n,k_n}^\ell}{\abs{C_n}}.
\]
Letting $n\to\infty$ and using the asymptotic equivalence of \ref{eq:geomthm2new} gives the desired result.

Statement \ref{eq:geomthm4new} easily follows by the assumption \eqref{eq:geoassumption1} (with a possibly different $c$).

Assume instead of \eqref{eq:geoassumption1} that $C_n$ is connected and that \eqref{ass:boundedintvolumes} is satisfied. Then the following shows that statement \ref{eq:geomthm4new} holds, and in particular \eqref{eq:geoassumption1} and consequently Assumption~\ref{ass:Cnassumption} are satisfied. Now, set $k_n \equiv 1$ in the above set-constructions. Then clearly $p_{n,1}\le 1$ for $n$ large enough. By considerations as above we therefore find for such $n$ that, asymptotically,
\[
	q_{n,1} - 1 \le q_{n,1}-p_{n,1} 
	\le 
	\abs{\partial( \bb{c_n^{-1}} C_n)\oplus B(2\sqrt{d})} ,
\]
which is bounded in $n$ due to \eqref{fml:tnklimit1} (with $k_n=1$) and \eqref{ass:boundedintvolumes}. Thus, $(q_{n,1})_{n\in\NN}$ is bounded by a constant $\overline c$. Since $0\in C_n$, we have $0\in Q_{n,1}$. Furthermore, $C_n$ is connected, so $Q_{n,1}$ consists of at most $\overline c$ points that are pairwise neighbors. Therefore,
\[
	Q_{n,1}
	\subseteq [-\overline c \:,\: \overline c]^d\cap\ZZ^d  .
\]
Realizing that $t_{n,k_n}^\ell \le t_{n,1}^\ell \sim c_{n,\ell}$ for any $n\in\NN$, $\ell=1,\dots,d$ and any sequence $k_n\to\infty$ shows \ref{eq:geomthm4new} for some $c>0$. This concludes the proof.
%\end{proof}

\subsection{Additional geometrical approximation result}\label{appnA2}

We need a set-construction similar to that of Section~\ref{sec:geometry} only with fixed $k\in\NN$ and an additional tuning parameter. The construction is very similar to that of \cite{StehrRonnNielsen2020}, but we include the result below for completeness.
We let $(B_n)_{n\in\NN}$ be a sequence of sets satisfying Assumption~\ref{ass:Cnassumption}, with $\overline B_n$ being the continuously indexed $p$-convex set associated to $B_n=\overline B_n\cap\Zd$.
Furthermore, let $\bb{b_n}=(b_{n,1},\dots,b_{n,d})$ be the $d$-dimensional scaling vectors appearing in the assumption (there denoted $\bb{c_n}$).
For all $n\in\NN$, $k\in\NN$ and any tuning constant $\lambda \in (0,\infty)$, we define 
\[
	\tilde t_{n,k}^\ell=\frac{b_{n,\ell}}{(\lambda\, k)^{1/d}} 
	\qquad\text{for all }\ell=1,\dots,d ,
\] 
with the dependence on $\lambda$ being suppressed in the notation for convenience. In what follows, bare in mind the dependence on $\lambda$ though.  
We collect the entries $\tilde t_{n,k}^\ell$ in the vector $\bb{\tilde t_{n,k}}$ and construct the box $\tilde I_z^{n,k}$ as
\[
	\tilde I_z^{n,k} = \bb{\tilde t_{n,k}}\bigl(z + [0,1)^d \bigr)
	= \bigtimes_{\ell=1}^d\big[z_\ell \tilde t_{n,k}^\ell,
	(z_\ell+1)\tilde t_{n,k}^\ell\big)
\]
for all $z\in\Zd$. Similarly as previously, we let $\tilde J_{z}^{n,k} = \tilde I_{z}^{n,k} \cap \Zd$ denote the lattice points of $\tilde I_{z}^{n,k}$. Lastly, we define $\tilde P_{n,k}$ and $\tilde Q_{n,k}$ to be the set of indices for which $\tilde J_z^{n,k}$ is contained in $B_n$ and intersects $B_n$, respectively. It can be seen that 
\begin{equation}\label{eq:pandqtilde}
	\limsup_{n\to\infty} \abs{\tilde P_{n,k}}
	\le \lambda k
	\le \liminf_{n\to\infty} \abs{\tilde Q_{n,k}},
\end{equation}
which will be used shortly.

\begin{lemma}\label{lem:geometrysup}
Let $(B_n)_{n\in\NN}$ be a sequence of sets satisfying Assumption~\ref{ass:Cnassumption}. Then
\begin{enumerate}[label=\normalfont(\roman*)]
	\item \label{eq:geomsup2} the sets $\tilde P_{n,k}$ and $\tilde Q_{n,k}$ defined above satisfy that
	\[
		\limsup_{n\to\infty} \abs{\tilde P_{n,k}} \sim \lambda k \sim
		\liminf_{n\to\infty}\abs{\tilde Q_{n,k}}
	\]
as $k\to\infty$.
\item\label{eq:geomsup3} for each $k\in \NN$ there exists $c_k<\infty$ such that
	\[
	 \bigcup_{z\in \tilde Q_{n,k} } \tilde J_z^{n,k}
	 \subseteq \bigcup_{z\in 
	 	[-c_{k},c_{k}]^d\cap \Zd} \tilde J_z^{n,k}
	\]
	for all sufficiently large $n$.
\end{enumerate}
\end{lemma}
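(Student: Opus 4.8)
The plan is to mirror the proof of Theorem~\ref{thm:maingeometrictheorem1}\ref{eq:geomthm2new}, the essential difference being that here $k$ is fixed while $n\to\infty$, so the boundary contribution no longer vanishes for fixed $k$; it only becomes negligible relative to $\lambda k$ after a subsequent passage $k\to\infty$. Throughout I would work in the scaling $\bb{b_n^{-1}}$, under which every box becomes a cube of side $(\lambda k)^{-1/d}$ that is \emph{independent of} $n$,
\[
	\bb{b_n^{-1}}\tilde I_z^{n,k} = (\lambda k)^{-1/d}\bigl(z+[0,1)^d\bigr),
\]
while $\bb{b_n^{-1}}\overline B_n$ has volume $\abs{\overline B_n}/\prod_\ell b_{n,\ell}\to 1$ by Assumption~\ref{ass:Cnassumption}\ref{item:cn2}. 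Since $(B_n)$ satisfies Assumption~\ref{ass:Cnassumption}, the intrinsic-volume bound \eqref{ass:boundedintvolumes} holds for the bodies $\overline B_{n,i}$, and I write $M_j=\sup_n\sum_{i=1}^p V_j(\bb{b_n^{-1}}\overline B_{n,i})<\infty$.

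For \ref{eq:geomsup2}, I would first note that any $z\in\tilde Q_{n,k}\setminus\tilde P_{n,k}$ corresponds to a box $\tilde I_z^{n,k}$ containing both a lattice point in $\overline B_n$ and one outside it, hence by connectedness a point of $\partial\overline B_n$; in the scaled picture its cube therefore lies in $\partial(\bb{b_n^{-1}}\overline B_n)\oplus B(\sqrt d\,(\lambda k)^{-1/d})$. As these cubes are disjoint of volume $1/(\lambda k)$,
\[
	\frac{\abs{\tilde Q_{n,k}}-\abs{\tilde P_{n,k}}}{\lambda k}
	\le \abs[\big]{\partial(\bb{b_n^{-1}}\overline B_n)\oplus B\bigl(\sqrt d\,(\lambda k)^{-1/d}\bigr)} .
\]
Applying \cite[Corollary~1]{StehrRonnNielsen2020} together with \eqref{ass:boundedintvolumes} bounds the right-hand side by $2\sum_{j=0}^{d-1}\omega_{d-j}M_j(\sqrt d)^{\,d-j}(\lambda k)^{-(d-j)/d}$; multiplying by $\lambda k$ and retaining the dominant $j=d-1$ term yields, for all large $n$ and all $\lambda k\ge 1$,
\[
	\abs{\tilde Q_{n,k}}-\abs{\tilde P_{n,k}}\le C(\lambda k)^{(d-1)/d}
\]
with $C$ independent of $n$ and $k$. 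Combining this uniform gap estimate with \eqref{eq:pandqtilde}, i.e. $\limsup_n\abs{\tilde P_{n,k}}\le\lambda k\le\liminf_n\abs{\tilde Q_{n,k}}$, gives
\[
	\lambda k\le\liminf_n\abs{\tilde Q_{n,k}}\le\limsup_n\abs{\tilde P_{n,k}}+C(\lambda k)^{(d-1)/d}\le\lambda k+C(\lambda k)^{(d-1)/d},
\]
and symmetrically $\lambda k-C(\lambda k)^{(d-1)/d}\le\limsup_n\abs{\tilde P_{n,k}}\le\lambda k$. Dividing by $\lambda k$ and letting $k\to\infty$ gives both equivalences, since $(\lambda k)^{(d-1)/d}/(\lambda k)=(\lambda k)^{-1/d}\to 0$.

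For \ref{eq:geomsup3}, I would invoke the surrounding-box property \eqref{eq:geoassumption1}, $\overline B_n\subseteq\bb{b_n}[-c,c]^d$. If $z\in\tilde Q_{n,k}$ then $\tilde I_z^{n,k}$ meets $\overline B_n$, so in the scaled picture the cube $(\lambda k)^{-1/d}(z+[0,1)^d)$ meets $[-c,c]^d$, which coordinatewise forces $\abs{z_\ell}\le c(\lambda k)^{1/d}+1$. Hence $\tilde Q_{n,k}\subseteq[-c_k,c_k]^d\cap\Zd$ with $c_k=c(\lambda k)^{1/d}+1$ independent of $n$, and the asserted union containment follows immediately.

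The main obstacle is the gap bound of \ref{eq:geomsup2} uniformly in $n$ with the correct power of $\lambda k$: because $k$ is fixed, the collar radius $\sqrt d\,(\lambda k)^{-1/d}$ does not shrink with $n$, so one cannot send the boundary term to zero as in Theorem~\ref{thm:maingeometrictheorem1}. Instead one must verify that its contribution scales like $(\lambda k)^{(d-1)/d}=o(\lambda k)$ and that the constant $C$ is controlled \emph{uniformly in} $n$ by the supremum $M_j$ supplied by \eqref{ass:boundedintvolumes}. A secondary point needing care is the lattice-versus-continuous discrepancy in the definitions of $\tilde P_{n,k}$ and $\tilde Q_{n,k}$, handled above by observing that any box separating interior from exterior lattice points must itself meet $\partial\overline B_n$.
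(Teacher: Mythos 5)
Your proposal is correct and follows essentially the same route as the paper: part \ref{eq:geomsup2} via the collar bound $\abs{\partial(\bb{b_n^{-1}}\overline B_n)\oplus B(\sqrt d\,(\lambda k)^{-1/d})}$ from \cite[Corollary~1]{StehrRonnNielsen2020} and the boundedness \eqref{ass:boundedintvolumes}, combined with \eqref{eq:pandqtilde}, and part \ref{eq:geomsup3} via the surrounding box \eqref{eq:geoassumption1}. The only (harmless) difference is that you make the gap estimate quantitative, $\abs{\tilde Q_{n,k}}-\abs{\tilde P_{n,k}}\le C(\lambda k)^{(d-1)/d}$ uniformly in $n$, where the paper simply passes to the iterated limit $\limsup_n(\cdot)\to 0$ as $k\to\infty$.
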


\begin{proof}
By geometric considerations similar to those that led to \eqref{fml:tnklimit1} in the proof of Theorem~\ref{thm:maingeometrictheorem1}, we find that
\begin{equation}\label{eq:geometricsupeq1}
	\limsup_{n\to\infty}\,
	\abs[\big]{\partial (\bb{b_n^{-1}}\overline B_n)\oplus B(\sqrt{d}\, \lambda^{-1/d} k^{-1/d})} \to 0
\end{equation}
as $k\to\infty$, where $B(r)$ denotes the closed ball of radius $r$. 
Now, if $\tilde J_z^{n,k}$ has index $z\in \tilde Q_{n,k}\setminus \tilde P_{n,k}$ then $\tilde I_z^{n,k} \cap \overline B_n \neq \emptyset$ and $\tilde I_z^{n,k} \not\subseteq \overline B_n$, and consequently, down-scaling by $\bb{b_n}$,
\[
	\bb{b_n^{-1}} \tilde I_z^{n,k}\cap \partial (\bb{b_n^{-1}}\overline B_n)
	\neq \emptyset
	\qquad \text{and}\qquad
	\bb{b_n^{-1}} \tilde I_z^{n,k} \not \subseteq \bb{b_n^{-1}}\overline B_n .
\]
In particular, for any such index $z\in\Zd$ we have that 
\[
	\bb{b_n^{-1}} \tilde I_z^{n,k}\subseteq \partial( \bb{b_n^{-1}}\overline B_n)
	\oplus B(\sqrt{d}\, \lambda^{-1/d} k^{-1/d}) .
\]
Using that $\abs{\bb{b_n^{-1}}\tilde I_z^{n,k}}=1/(\lambda k)$, and taking the union over $z\in\Zd$ satisfying the inclusion above, we therefore obtain
\[
	\frac{\abs{\tilde Q_{n,k}}-\abs{\tilde P_{n,k}}}{\lambda k}
	\le 
	\abs[\Big]{
	%\smashoperator{ 
	\bigcup_{z}
	%}
	\bb{b_n^{-1}} \tilde I_z^{n,k} }
	\le \abs[\big]{\partial (\bb{b_n^{-1}}\overline B_n)\oplus 
	B(\sqrt{d}\, \lambda^{-1/d} k^{-1/d})} .
\] 
Combining \eqref{eq:pandqtilde} with \eqref{eq:geometricsupeq1} shows statement \ref{eq:geomsup2}.

Statement \ref{eq:geomsup3} easily follows from the fact that there exists $0<c<\infty$ such that
\[
	\overline B_n \subseteq
		\bigtimes_{\ell=1}^d \bigl[-c\, b_{n,\ell}\:,\: c\, b_{n,\ell}] ,
\]
combined with the fact that $\tilde J_z^{n,k}$ is a discrete box of asymptotical side lengths $b_{n,\ell}/(\lambda k)^{1/d}$ for $\ell=1,\dots,d$.
\end{proof}

\section{Proof of Theorem~\ref{thm:meancluster}}\label{appnB}

We will make use of the event
\[
	E_{m,n}=\{N_n(E)\leq 1\text{ for all }E\in \mathcal{E}_m\}
\]
for $m,n\in\NN$, where $\mathcal{E}_m$ is the collection of sets
\[
	\mathcal{E}_m=
	\Bigl\{\frac{z}{m}+[-1/m,1/m]^d\::\: 
	\mathrm{dist}\Big(\frac{z}{m}, C\Big)\leq \frac{\sqrt{d}}{m},\, z\in\ZZ^d \Bigr\}.
\]
Both are originally defined in the proof of Theorem~\ref{thm:pointprocesstheoremv2}.

\begin{proof}[Proof of Theorem~\ref{thm:meancluster}]
First we find
\begin{equation}\label{fml:meanclusternumber}
\EE(Y_0^{n,k_n})=\EE\Big(\sum_{v\in J_0^{n,k_n}} \I{\{\xi_v>x_n\}}\Big)=\abs{J_0^{n,k_n}}\PP(\xi_0>x_n)\sim \frac{\abs{D_n}}{k_n}\PP(\xi_0>x_n).
\end{equation}
As in the proof of Theorem~\ref{thm:pointprocesstheorem}, we have from (\ref{eq:knequality}), (\ref{eq:limitexpequality}) and (\ref{eq:thetacondition}) that
\begin{equation}\label{fml:tailofJ0}
	k_n\PP(M_\xi(J_0^{n,k_n})>x_n)
	\sim \abs{D_n}\theta \PP(\xi_0>x_n) .
\end{equation}
Now \eqref{fml:meancluster1} follows directly from combining \eqref{fml:meanclusternumber} and \eqref{fml:tailofJ0} with the assumption $\abs{D_n}\PP(\xi_0>x_n)\to \tau$.

Next we assume $\overline{\mathcal{D}}(x_n;k_n;K_n)$ and turn to demonstrating (\ref{fml:meancluster12}). In fact, we show the more general result that
\begin{equation}\label{fml:meancluster12ell}
	\max_{z\in Q_{n,k_n}^-}
	\abs[\big]{\EE(Y_z^{n,k_n}\mid Y_z^{n,k_n}>0,N_n(C)\in\mathcal{L})- 1/\theta}\to 0
\end{equation}
for any fixed subset $\mathcal{L}\subseteq\NN$. For this set, we additionally use the notation
\[
\mathcal{L}-1=\{z-1\::\: z\in\mathcal{L}\}.
\]
Clearly, (\ref{fml:meancluster12}) follows by letting $\mathcal{L}=\{\ell\}$. Define for each $z\in Q_{n,k_n}^-$  the set
\[
\tilde{H}_z^{n,k_n}= \bigl\{
		u\in\Zd\::\:
		z_\ell t_{n,k_n}^\ell+\gamma_{n,\ell} \le u_\ell \le (z_\ell+1) t_{n,k_n}^\ell - 1 - \gamma_{n,\ell},\ 
		\text{for all }\ell=1,\dots,d
	\bigr\}.
\]
Note that $\tilde{H}_z^{n,k_n}$ is defined very similar to the previously defined set $H_z^{n,k_n}$. However, $\tilde{H}_z^{n,k_n}$ is $\bb{\gamma_n}$-separated from any set disjoint of $J_z^{n,k_n}$. 

Noting that $\abs{J_z^{n,k_n}}\tilde{\alpha}_n=o(1/k_n)$, we find from (\ref{eq:mixingcond22}) that
\begin{equation}\label{fml:meanclusternumber2}
\begin{aligned}
	&\max_{z\in Q_{n,k_n}^-}\abs[\Big]{\EE(Y_z^{n,k_n}\I{\{N_n(C)\in\mathcal{L}\}})-\frac{\tau \PP(N(C)\in\mathcal{L} -1)}{k_n}}\\
	&=\max_{z\in Q_{n,k_n}^-}\abs[\Big]{\sum_{v\in J_z^{n,k_n}}\PP(\xi_v>x_n,N_n(C)\in\mathcal{L})-\frac{\tau \PP(N(C)\in\mathcal{L} -1)}{k_n}}\\
	&=\max_{z\in Q_{n,k_n}^-}\abs[\Big]{\sum_{v\in \tilde{H}_z^{n,k_n}}\PP(\xi_v>x_n,N_n(C\setminus \bb{c_n^{-1}}J_z^{n,k_n})\in\mathcal{L}-1)-\frac{\tau \PP(N(C)\in\mathcal{L} -1)}{k_n}}\\
	&\phantom{=}+o(1/k_n)\\
	&=\max_{z\in Q_{n,k_n}^-}\abs[\Big]{\abs{\tilde{H}_z^{n,k_n}}\PP(\xi_0>x_n)\PP(N_n(C\setminus \bb{c_n^{-1}}J_z^{n,k_n})\in\mathcal{L}-1)-\frac{\tau \PP(N(C)\in\mathcal{L} -1)}{k_n}}\\
	&\phantom{=}+o(1/k_n)\\
	&=\max_{z\in Q_{n,k_n}^-}\abs[\Big]{\abs{J_z^{n,k_n}}\PP(\xi_0>x_n)\PP(N_n(C\setminus \bb{c_n^{-1}}J_z^{n,k_n})\in\mathcal{L}-1)-\frac{\tau \PP(N(C)\in\mathcal{L} -1)}{k_n}}\\
	&\phantom{=}+o(1/k_n),
\end{aligned}
\end{equation}
where the second and fourth equality follows since $\abs{J_z^{n,k_n}\setminus \tilde{H}_z^{n,k_n}} =o(\abs{J_z^{n,k_n}})$ and $\abs{J_z^{n,k_n}}\PP(\xi_0>x_n)$ is of order $1/k_n$ since $0<\tau<\infty$. 

Now fix $m\in\NN$ for the moment and assume $1/k_n^{1/d}<1/m$, which occurs when $n$ is large enough. For any $z\in Q_{n,k_n}^-$ we can find a set $E\in \mathcal{E}_m$ such that $\bb{c_n^{-1}}J_z^{n,k_n}\subseteq E$. Recall that $\mathcal{E}_m$ is finite, so
\[
	\{N_n(E)\::\:E\in\mathcal{E}_m\}\stackrel{\mathcal{D}}{\to} \{N(E)\::\:E\in\mathcal{E}_m\}
\] 
by Theorem~\ref{thm:pointprocesstheorem}. Moreover, by stationarity, $\PP(N(E)=0)\geq \PP(N(E^{0,m})=0)$ with equality for all $E\subseteq C$, where $E^{0,m}=[-1/m,1/m]^d$ (or another $\mathcal{E}_m$-set fully contained in $C$). Then
\begin{align*}
	\liminf_{n\to\infty}\min_{z\in Q_{n,k_n}^-}
	\PP(N_n(\bb{c_n^{-1}}J_z^{n,k_n})=0) &
	\geq \liminf_{n\to\infty}\min_{E\in \mathcal{E}_m} \PP(N_n(E)=0)\\&
	=\PP(N(E^{0,m})=0).
\end{align*}
Since $\PP(N(E^{0,m})=0) \to 1$ as $m\to\infty$, we deduce that
\begin{equation}\label{fml:CminusJlimit}
\begin{aligned}
	\MoveEqLeft
	\max_{z\in Q_{n,k_n}^-}\abs[\Big]{\PP(N_n(C\setminus \bb{c_n^{-1}}J_z^{n,k_n})\in\mathcal{L}-1)-\PP(N(C)\in\mathcal{L}-1)} \\&
	=\abs[\big]{\PP(N_n(C)\in \mathcal L-1)-\PP(N(C)\in \mathcal L-1)}+o(1)\\&
	=o(1)
\end{aligned}
\end{equation}
as $n\to\infty$. Using this together with $\abs{J_z^{n,k_n}}\sim \abs{D_n}/{k_n}$ and the assumption $\abs{D_n}\PP(\xi_0>x_n)\to \tau$, we now have from \eqref{fml:meanclusternumber2} that 
\begin{equation}\label{fml:meanclusternumber3}
	\max_{z\in Q_{n,k_n}^-}\abs[\Big]{\EE(Y_z^{n,k_n}\I{\{N_n(C)\in\mathcal{L}\}})-
	\frac{\tau}{k_n} \PP(N(C)\in\mathcal{L}-1)}=o(1/k_n).
\end{equation}
After a few straightforward manipulations we have from \eqref{eq:mixingcond21} that
\begin{equation}\label{fml:tailofJ0withmore}
\begin{aligned}
	&\max_{z\in Q_{n,k_n}^-}\abs[\Big]{\PP(M_\xi(J_z^{n,k_n})> x_n,N_n(C)\in\mathcal{L})-\frac{\theta\tau}{k_n}\PP(N(C)\in\mathcal{L}-1)}\\&
	=\max_{z\in Q_{n,k_n}^-}\abs[\Big]{\PP(M_\xi(J_z^{n,k_n})> x_n,N_n(C\setminus \bb{c_n^{-1}} J_z^{n,k_n})\in\mathcal{L}-1)-\frac{\theta\tau}{k_n}\PP(N(C)\in\mathcal{L}-1)}\\&
	=\max_{z\in Q_{n,k_n}^-}\abs[\Big]{\PP(M_\xi(\tilde{H}_z^{n,k_n})> x_n,N_n(C\setminus \bb{c_n^{-1}} J_z^{n,k_n})\in\mathcal{L}-1)-\frac{\theta\tau}{k_n}\PP(N(C)\in\mathcal{L}-1)}\\&
	\phantom{=}+o(1/k_n)\\&
	=\max_{z\in Q_{n,k_n}^-}\abs[\Big]{\PP(M_\xi(\tilde{H}_z^{n,k_n})> x_n)\PP(N_n(C\setminus \bb{c_n^{-1}} J_z^{n,k_n})\in\mathcal{L}-1)-\frac{\theta\tau}{k_n}\PP(N(C)\in\mathcal{L}-1)}\\&
	\phantom{=}+o(1/k_n)\\&
	=\max_{z\in Q_{n,k_n}^-}\abs[\Big]{\PP(M_\xi(J_z^{n,k_n})> x_n)\PP(N_n(C\setminus \bb{c_n^{-1}} J_z^{n,k_n})\in\mathcal{L}-1)-\frac{\theta\tau}{k_n}\PP(N(C)\in\mathcal{L}-1)}\\&
	\phantom{=}+o(1/k_n)\\&
	=\max_{z\in Q_{n,k_n}^-}\abs[\Big]{\PP(M_\xi(J_z^{n,k_n})> x_n)\PP(N(C)\in\mathcal{L}-1)-\frac{\theta\tau}{k_n}\PP(N(C)\in\mathcal{L}-1)}+o(1/k_n)\\&
	=o(1/k_n),
\end{aligned} 
\end{equation}
where the 
second and fourth equality follow by arguments similar to those leading to \eqref{fml:meanclusternumber2}, 
the fifth equality is a result of \eqref{fml:CminusJlimit} and the last equality is due to \eqref{fml:tailofJ0} and the assumption $\abs{D_n}\PP(\xi_0>x_n)\to \tau$. Combining \eqref{fml:meanclusternumber3} and \eqref{fml:tailofJ0withmore} gives \eqref{fml:meancluster12ell}.

To show the results (\ref{fml:meancluster2}) and (\ref{fml:meancluster22}) for the second cluster measure, we first demonstrate that
\begin{equation}\label{fml:meanclusterEset}
\limsup_{m\to\infty}\limsup_{n\to\infty}\max_{z\in Q_{n,k_n}^-}\EE(Y_z^{n,k_n}\I{E_{m,n}^c}\mid Y_z^{n,k_n}>0,N_n(C)\in \mathcal{L})=0,
\end{equation}
where $\mathcal{L}\subseteq \NN$ is again a fixed subset. Now let $z\in Q_{n,k_n}^-$. For each $m\in\NN$ we divide $\mathcal{E}_m$ into two sub-collections
\begin{align*}	
\mathcal{E}_{m,z,n}&=\{E\in\mathcal{E}_m\::\:\bb{c_n^{-1}}\bb{t_{n,k_n}}z\in E \},\\
\mathcal{F}_{m,z,n}&=\{E\in\mathcal{E}_m\::\:\bb{c_n^{-1}}\bb{t_{n,k_n}}z\notin E \}.
\end{align*}
That is the $E$-sets wherein the potential cluster $J_z^{n,k_n}$ will be counted by $N_n$ and the $E$-sets, wherein it is not counted, respectively. We note that the number of sets in $\mathcal{E}_{m,z,n}$ is bounded by some finite constant $R$ independent of $m,z,n$. Obviously,
\begin{equation}\label{fml:twosums}
\begin{aligned}
	\MoveEqLeft
	\EE(Y_z^{n,k_n}\I{E_{m,n}^c}\mid Y_z^{n,k_n}>0,N_n(C)\in\mathcal{L})\\
	= & \sum_{E\in \mathcal{E}_{m,n,z}}\EE(Y_z^{n,k_n}\I{\{N_n(E)\geq 2\}}\mid Y_z^{n,k_n}>0,N_n(C)\in\mathcal{L})\\
	&+\sum_{F\in \mathcal{F}_{m,n,z}}\EE(Y_z^{n,k_n}\I{\{N_n(F)\geq 2\}}\mid Y_z^{n,k_n}>0,N_n(C)\in\mathcal{L}) .
\end{aligned}
\end{equation}
For any $F\in \mathcal{F}_{m,z,n}$ it is obtained with similar arguments to the above that
\begin{align*}
	\EE(Y_z^{n,k_n}\I{\{N_n(C)\in\mathcal{L},N_n(F)\geq 2\}}) &
	\le \EE(Y_z^{n,k_n}\I{\{N_n(F)\geq 2\}}) \\ &
	=\frac{1}{k_n}\tau \PP(N(F)\geq 2)+o(1/k_n)
\end{align*}
uniformly in $F$ and $z$. Using \eqref{fml:tailofJ0withmore} we then find
\begin{equation}\label{fml:boundFsets}
\begin{aligned}
	\MoveEqLeft
	\limsup_{n\to\infty}\max_{z\in Q_{n,k_n}^-}\sum_{F\in\mathcal{F}_{m,z,n}}\EE(Y_z^{n,k_n}\I{\{N_n(F)\geq 2\}}\mid Y_z^{n,k_n}>0,N_n(C)\in\mathcal{L} )\\& 
	\le \sum_{E\in\mathcal{E}_m}\frac{\PP(N(E)\geq 2)}{\theta\,\PP(N(C)\in\mathcal{L}-1)},
\end{aligned}
\end{equation}
which converges to 0 as $m\to\infty$, using that $\PP(N(E)\geq 2)$ is of order $1/m^{2d}$ and $\abs{\mathcal{E}_m}$ is of order $m^d$. 
We similarly have that
\begin{align*}
	\EE(Y_z^{n,k_n}\I{\{N_n(C)\in\mathcal{L},N_n(E)\geq 2\}}) & 
	\le \EE(Y_z^{n,k_n}\I{\{N_n(E)\geq 2\}}) \\&
	=\frac{1}{k_n}\tau \PP(N(E)\geq 1)+o(1/k_n)
\end{align*}
uniformly in $F$ and $z$. Thus,
\begin{equation}\label{fml:boundEsets}
\begin{aligned}
	\MoveEqLeft
	\limsup_{n\to\infty}\max_{z\in Q_{n,k_n}^-}\sum_{E\in\mathcal{E}_{m,z,n}}\EE(Y_z^{n,k_n}\I{\{N_n(E)\geq 2\}}\mid Y_z^{n,k_n}>0,N_n(C)\in\mathcal{L} )\\&
	\leq R\max_{E\in\mathcal{E}_m}\frac{\PP(N(E)\geq 1)}{\theta\,\PP(N(C)\in\mathcal{L}-1)}.
\end{aligned}
\end{equation}
Since $\PP(N(E)\geq 1)$ is of order $1/m^d$, this converges to 0 as $m\to\infty$. Combining (\ref{fml:twosums}), (\ref{fml:boundFsets}) and (\ref{fml:boundEsets}) gives (\ref{fml:meanclusterEset}). 

Using that the number of extremal points in $\mathcal{C}$ are bounded by the total number of extremal points in $\bigcup_{z\in Q_{n,k_n}^-}J_z^{n,k_n}$, we find that
\begin{equation}\label{fml:EConEbounded}
\begin{aligned}
	&
	\EE(\abs{\mathcal{C}}\I{E_{m,n}^c}\I{\{\tilde{N}_n(C)=\ell\}})\\ &
	\leq\EE(\abs{\mathcal{C}}\I{E_{m,n}^c}\I{\{N_n(C)>0\}})\\&
	\leq \sum_{z\in Q_{n,k_n}^-}\EE(Y_z^{n,k_n}\I{E_{m,n}^c}\I{\{N_n(C)>0\}})\\&
	=\sum_{z\in Q_{n,k_n}^-}\EE(Y_z^{n,k_n}\I{E_{m,n}^c}\mid Y_z^{n,k_n}>0,N_n(C)>0)\PP(Y_z^{n,k_n}>0,N_n(C)>0)\\&
	\leq \abs{Q_{n,k_n}^-}\PP(Y_0^{n,k_n}>0)\max_{z\in Q_{n,k_n}^-}\EE(Y_z^{n,k_n}\I{E_{m,n}^c}\mid Y_z^{n,k_n}>0,N_n(C)>0).
\end{aligned}
\end{equation}
From $P_{n,k_n}\subseteq Q_{n,k_n}^-\subseteq Q_{n,k_n}$ and Theorem~\ref{thm:maingeometrictheorem1} we have $\abs{Q_{n,k_n}^-}\sim k_n$, so due to (\ref{fml:tailofJ0}), (\ref{fml:meanclusterEset}) and the fact that $\PP(\tilde{N}_n(C)=\ell)$ has a non-vanishing limit, we also have
\[
	\limsup_{m\to\infty}\limsup_{n\to\infty}\EE(\abs{\mathcal{C}}\I{E_{m,n}^c}\mid \tilde{N}_n(C)=\ell)=0.
\]

Now let $\delta>0$ and choose $m,n_0\in\NN$ such that 
\begin{align*}
	\EE(\abs{\mathcal{C}}\I{E_{m,n}^c}\mid \tilde{N}_n(C)=\ell)
	& <\delta\\
	\max_{z\in Q_{n,k_n}^-}\EE(Y_z^{n,k_n}\I{E_{m,n}^c}\mid Y_z^{n,k_n}>0,N_n(C)=\ell)
	& <\delta\\
\frac{2\sqrt{d}+3}{k_n^{1/d}}&< 1/m
\end{align*}
for $n\geq n_0$. We write
\begin{equation*}%\label{fml:expectationwithdelta}
\EE(\abs{\mathcal{C}}\mid \tilde{N}_n(C)=\ell)=\EE(\abs{\mathcal{C}}\I{E_{m,n}}\mid \tilde{N}_n(C)=\ell)+\EE(\abs{\mathcal{C}}\I{E_{m,n}^c}\mid \tilde{N}_n(C)=\ell),
\end{equation*}
where the second term is bounded from above by $\delta$ for $n\geq n_0$. On $E_{m,n}$ and due to the assumption $(2\sqrt{d}+3)/k_n^{1/d}<1/m$ we have with the same arguments as in the proof of Theorem~\ref{thm:pointprocesstheoremv2} that each of the clusters, as counted by $\tilde{N}_n$, are contained in a $J_z^{n,k_n}$-set. In particular, on the set $E_{m,n}\cap(N_n(C)=\ell)$ we have that
\[
\abs{\mathcal{C}}=Y_{z_S}^{n,k_n},
\]
where $z_S$ is chosen uniformly on $\{z\in Q_{n,k_n}^-\::\:Y_z^{n,k_n}>0\}$ independently of everything else. Technically, this means that we choose a uniform integer number from $\{1,\ldots,\ell\}$ and use an order on $\Zd$ to translate the integer back to a $z$-value. Thus, using that $\tilde{N}_n(C)=N_n(C)$ on $E_{m,n}$ and $\PP(\tilde{N}_n(C)=\ell)\sim \PP(N_n(C)=\ell)$, we have
\begin{align*}
	&
	\EE(\abs{\mathcal{C}}\I{E_{m,n}}\mid \tilde{N}_n(C)=\ell)\\&
	\sim \EE(\abs{\mathcal{C}}\I{E_{m,n}}\mid N_n(C)=\ell)\\	&
	=\EE(Y_{z_S}^{n,k_n}\I{E_{m,n}}\mid N_n(C)=\ell)+o(1)\\&
	=\EE(Y_{z_S}^{n,k_n}\mid N_n(C)=\ell)-\EE(Y_{z_S}^{n,k_n}\I{E_{m,n}^c}\mid N_n(C)=\ell)+o(1).
\end{align*}
Applying similar arguments to (\ref{fml:EConEbounded}) gives that also
\[
	\EE(Y_{z_S}^{n,k_n}\I{E_{m,n}^c}\mid N_n(C)=\ell)<\delta
\]
for $n\geq \tilde{n}_0$ by choosing $\tilde{n}_0$ large enough. We now find that
\begin{equation*}
\begin{aligned}
	&\EE(Y_{z_S}^{n,k_n}\I{\{N_n(C)=\ell\}})\\
	&=\EE(\EE(Y_{z_S}^{n,k_n}\I{\{N_n(C)=\ell\}}\mid (\xi_v)_{v\in\Zd}))\\
	&=\EE \Bigl(\frac{1}{\ell}\sum_{z\in Q_{n,k_n}^-} Y_z^{n,k_n} \I{\{Y_z^{n,k_n}>0,N_n(C)=\ell\}}\Bigr)\\
	&=\frac{1}{\ell}\sum_{z\in Q_{n,k_n}^-}\EE(Y_z^{n,k_n}\mid Y_z^{n,k_n}>0,N_n(C)=\ell)\PP(Y_z^{n,k_n}>0,N_n(C)=\ell),
\end{aligned}
\end{equation*}
where the second equality is due to the independence between $z_S$ and everything else on $(N_n(C)=\ell)$. Using the uniform convergences in (\ref{fml:meancluster12}) and (\ref{fml:tailofJ0withmore}) and that $\abs{Q_{n,k_n}^-}\sim k_n$, we find that
\[
\EE(Y_{z_S}^{n,k_n}\I{\{N_n(C)=\ell\}})=\frac{\tau}{\ell}\PP(N(C)=\ell-1)+o(1).
\]
Using that 
\[
\frac{\PP(N(C)=\ell-1)}{\PP(N(C)=\ell)}=\frac{\ell}{\theta\tau}
\]
and letting $\delta\to 0$ gives (\ref{fml:meancluster22}).

Finally, we turn to demonstrating (\ref{fml:meancluster2}). For this, recalling that always $\tilde{N}_n(C)\leq N_n(C)$, we follow the lines from (\ref{fml:EConEbounded}) and find for all $\ell_0\in\NN$ that
\begin{align*}
	&\EE(\abs{\mathcal{C}}\I{\{\tilde{N}_n(C)> \ell_0\}})\\
	&\leq \EE(\abs{\mathcal{C}}\I{\{N_n(C)> \ell_0\}})\\
	&\leq \sum_{z\in Q_{n,k_n}^-}\EE(Y_z^{n,k_n}\I{\{N_n(C)> \ell_0\}})\\
	&=\sum_{z\in Q_{n,k_n}^-}\EE(Y_z^{n,k_n}\mid Y_z^{n,k_n}>0,N_n(C)> \ell_0)\PP(Y_z^{n,k_n}>0,N_n(C)> \ell_0).
\end{align*}
Again using the uniform convergences in (\ref{fml:meancluster12}) and (\ref{fml:tailofJ0withmore}), we find that 
\begin{equation*}%\label{fml:EClimsup}
	\limsup_{n\to\infty}\EE(\abs{\mathcal{C}}\I{\{\tilde{N}_n(C)> \ell_0\}})
	\leq \tau \PP(N(C)> \ell_0 - 1),
\end{equation*}
which decreases to 0 as $\ell_0\to\infty$. Note that (\ref{fml:meancluster22}) equivalently writes that for every fixed $\ell\in\NN$,
\begin{equation}\label{fml:Enotconditioned}
	\EE(\abs{\mathcal{C}}\I{\{\tilde{N}_n(C)=\ell\}})
	\to \PP(N(C)=\ell)/\theta,
\end{equation}
since $\PP(\tilde{N}_n(C)=\ell)\to \PP(N(C)=\ell)$. Therefore,
\[
	\EE(\abs{\mathcal{C}}\I{\{\tilde{N}_n(C)>0\}})
	=\sum_{\ell=1}^{\ell_0}	\EE(\abs{\mathcal{C}}\I{\{\tilde{N}_n(C)=\ell\}})
	+\EE(\abs{\mathcal{C}}\I{\{\tilde{N}_n(C)> \ell_0\}}) ,
\]
where the first term converges to $\PP(0<N(C)\leq \ell_0)/\theta$ as $n\to\infty$ due to \eqref{fml:Enotconditioned}, and the second in the limit is bounded by $\tau \PP(N(C)> \ell_0-1)$. Letting $\ell_0\to\infty$ shows
\[
	\EE(\abs{\mathcal{C}}\I{\{\tilde{N}_n(C)>0\}})
	\to  \PP(N(C)>0)/\theta,
\]
from which the desired property \eqref{fml:meancluster2} follows.
\end{proof}

\end{appendix}

\bibliographystyle{imsart-number} % Style BST file
\bibliography{bibref.bib}       % Bibliography file (usually '*.bib')

%% or include bibliography directly:
% \begin{thebibliography}{}
% \bibitem{b1}
% \end{thebibliography}

\end{document}